\DeclareMathOperator{\sgn}{sgn}
\DeclareMathOperator{\Var}{Var}
\DeclareRobustCommand{\stirling}{\genfrac\{\}{0pt}{}}
\newtheorem{theorem}{Theorem}[section]
\newtheorem{lemma}{Lemma}[section]
\newtheorem{proposition}{Proposition}[section]
\newtheorem{corollary}{Corollary}[section]
\theoremstyle{definition}
\newtheorem{definition}{Definition}[section]
\theoremstyle{remark}
\newtheorem{remark}{Remark}[section]
\numberwithin{equation}{section}
\begin{document}

\title[Eigenvalue Fluctuations of Symmetric Group Representations]{Eigenvalue Fluctuations of Symmetric Group Permutation Representations on k-tuples and k-subsets}

\author{Benjamin Tsou}
\address{Department of Mathematics, University of California, Berkeley, CA 94720-3840}
\email{benjamintsou@gmail.com}

\begin{abstract}
Let the term $k$-representation refer to the permutation representations of the symmetric group $\mathfrak{S}_n$ on $k$-tuples and $k$-subsets as well as the $S^{(n-k,1^k)}$ irreducible representation of $\mathfrak{S}_n$.  Endow $\mathfrak{S}_n$ with the Ewens distribution and let $\alpha$ and  $\beta$ be linearly independent irrational numbers over $\mathbb{Q}$.  Then for fixed $k > 1$ we show that as $n \to \infty$, the normalized count of the number of eigenangles in a fixed interval $(\alpha, \beta)$ of a $k$-representation evaluated at a random element $\sigma \in \mathfrak{S}_n$ converges weakly to a compactly supported distribution.  In particular, we compute the limiting moments and moreover provide an explicit formula for the limiting density when $k = 2$ and the Ewens parameter $\theta = 1$ (uniform probability measure).  This is in contrast to the $k = 1$ case where it has been shown previously that the distribution is asymptotically Gaussian.

\end{abstract}

\maketitle

\allowdisplaybreaks

\section{Introduction} \label{intro}

The group of permutation matrices can be viewed as the simplest (nontrivial) permutation representation of the symmetric group $\mathfrak{S}_n$.  Wieand \cite{wieand} showed that under a uniform probability measure, the normalized limiting distribution of the number of eigenvalues of a random permutation matrix in some fixed arc of the unit circle follows a standard normal distribution.  Recently, Ben Arous and Dang \cite{arousdang} have extended Wieand's work in \cite{wieand} to general functions other than the indicator function on an interval.  In particular, they show that the fluctuations of sufficiently smooth linear statistics of permutation matrices drawn from the Ewens distribution are asymptotically non-Gaussian but infinitely divisible.  They mention that this result is quite unusual since most prior work show asymptotic Gaussianity of eigenvalue fluctuation statistics.  

In this paper, we extend Wieand's results in a different direction by studying higher dimensional representations of the symmetric group.  In particular, we will consider the permutation representations on ordered $k$-tuples and unordered subsets of size $k$ as well as the irreducible representation $S^{(n-k,1^k)}$ for $k \ge 2$.  We show that for these three types of representations (denoted $\rho_{n,k}^\textrm{tuple}$, $\rho_{n,k}^\textrm{set}$, and $\rho^{(n-k,1^k)}$), for $\sigma \in \mathfrak{S}_n$ drawn from the Ewens distribution, the normalized count of eigenvalues in some fixed arc of the unit circle converges to a class of compactly supported limiting distributions.

Let us now quickly review how permutation representations of $k$-tuples and $k$-subsets are defined.  Section \ref{irrep} will give a short overview of irreducible representations of symmetric groups.  

First, consider the set $Q_{n,k}^\textrm{tuple}$ of ordered $k$-tuples $(t_1,...,t_k)$ of distinct integers chosen from the set $[n] := \{1,...,n\}$.  The symmetric group $\mathfrak{S}_n$ acts naturally on this set by $\sigma(t_1,...,t_k) = (\sigma(t_1),...,\sigma(t_k))$.  We can form the $\displaystyle{\frac{n!}{(n-k)!}}$-dimensional vector space $V_{n,k}^{\textrm{tuple}}$ with basis elements $e_{(t_1,...,t_k)}$.  Then the action of $\mathfrak{S}_n$ on $Q_{n,k}^\textrm{tuple}$ induces the permutation representation $\rho_{n,k}^{\textrm{tuple}}: \mathfrak{S}_n \rightarrow O(V_{n,k}^{\textrm{tuple}})$ where $O(V_{n,k}^{\textrm{tuple}})$ is the orthogonal group on $V_{n,k}^{\textrm{tuple}}$.

Similarly, consider the set $Q_{n,k}^\textrm{set}$ of $k$-subsets $\{t_1,...,t_k\}$ of distinct integers chosen from $[n]$.  As for the set of ordered tuples, the symmetric group $\mathfrak{S}_n$ acts naturally on $Q_{n,k}^\textrm{set}$ by $\sigma(\{t_1,...,t_k\}) = \{\sigma(t_1),...,\sigma(t_k)\}$.  We can form the $\displaystyle{ \binom{n}{k} }$-dimensional vector space $V_{n,k}^{\textrm{set}}$ with basis elements $e_{\{t_1,...,t_k\}}$.  Then the action of $\mathfrak{S}_n$ on $Q_{n,k}^\textrm{set}$ gives the permutation representation $\rho_{n,k}^{\textrm{set}}: \mathfrak{S}_n \to O(V_{n,k}^{\textrm{set}})$.  

To state the main results, let us introduce the relevant random variables describing the eigenvalue statistics of these symmetric group representations.  Finite group representations are all unitarisable, and therefore all the eigenvalues are of the form $e^{2 \pi i \phi}$ on the unit circle.  It will be convenient to refer to each eigenvalue $e^{2 \pi i \phi}$ by its \textit{eigenangle} $\phi \in [0, 1)$.  Let $I = (\alpha, \beta)$ be an interval where $\alpha$ and $\beta$ are irrational and linearly independent over $\mathbb{Q}$.  For $\sigma \in \mathfrak{S}_n$, let $X_{n,k}^{\textrm{tuple}}(\sigma)$, $X_{n,k}^{\textrm{set}}(\sigma)$, and $X_{n,k}^{\textrm{irrep}}(\sigma)$ denote the number of eigenangles (counted with multiplicity) of $\rho_{n,k}^\textrm{tuple}(\sigma)$, $\rho_{n,k}^\textrm{set}(\sigma)$, and $\rho^{(n-k,1^k)}(\sigma)$ respectively in the arc $I$.  

Recall (see e.g. \cite{ewens}) that the Ewens distribution with parameter $\theta > 0$ defined on $\mathfrak{S}_n$ is given by 
\begin{equation}
\mathbb{P}(\sigma) = \frac{\theta^{K(\sigma)}}{\theta (\theta+1) \cdot \cdot \cdot (\theta + n - 1)}
\end{equation}
where $K(\sigma)$ is the total number of cycles of the permutation $\sigma$.  By equipping $\mathfrak{S}_n$ with the Ewens measure, we can think of $X_{n,k}^{\textrm{tuple}}$, $X_{n,k}^{\textrm{set}}$, and $X_{n,k}^{\textrm{irrep}}$ as random variables.  

For $k>1$, define the centered and scaled versions
\begin{equation}
Y_{n,k}^{\textrm{tuple}} := \frac{X_{n,k}^{\textrm{tuple}} - \mathbb{E}[X_{n,k}^{\textrm{tuple}}]}{n^{k-1}}
\end{equation} 
\begin{equation}
Y_{n,k}^{\textrm{set}} := k! \frac{X_{n,k}^{\textrm{set}} - \mathbb{E}[X_{n,k}^{\textrm{set}}]}{n^{k-1}}
\end{equation} 
\begin{equation}
Y_{n,k}^{\textrm{irrep}} := k! \frac{X_{n,k}^{\textrm{irrep}} - \mathbb{E}[X_{n,k}^{\textrm{irrep}}]}{n^{k-1}}
\end{equation} 
Our first result is to show that to compute the limiting distribution of these normalized eigenangle counts, it suffices to consider the simpler random variables $Y_{n,k}$ defined below.  For each $\sigma \in \mathfrak{S}_n$, let $C_j^{(n)}(\sigma)$ denote the number of cycles of $\sigma$ of length $j$.  Let $\mathcal{L}(X)$ denote the law of a random variable $X$. 

\begin{theorem} \label{simplifythm}
Let \begin{equation}
Y_{n,k} = \sum_{j=1}^n \frac{j^{k-1} C_j^{(n)}  (\{j \alpha\} - \{j \beta\})}{n^{k-1}}
\end{equation} 
Under the Ewens distribution with parameter $\theta > 0$, as $n \to \infty$ for fixed $k > 1$, each of the random variables $Y_{n,k}^{\textrm{tuple}}$, $Y_{n,k}^{\textrm{set}}$, and $Y_{n,k}^{\textrm{irrep}}$ converges in law to the same limiting distribution $\displaystyle{\lim_{ n\to \infty} \mathcal{L} (Y_{n,k})}$ (assuming it exists).
\end{theorem}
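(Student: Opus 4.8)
The plan is to prove the stronger statement that each of $Y_{n,k}^{\textrm{tuple}}$, $Y_{n,k}^{\textrm{set}}$, and $Y_{n,k}^{\textrm{irrep}}$ differs from $Y_{n,k}$ by a quantity tending to $0$ in probability; the assertion then follows from Slutsky's lemma. The starting point is an exact formula for eigenangle counts: if $\pi$ permutes an $N$-element set and $D_L$ denotes its number of $L$-cycles (so that $\sum_L L D_L=N$), then the eigenangles of the associated permutation matrix are the numbers $m/L$ with $0\le m<L$, each $L$-cycle contributing all of them, whence the count lying in $(\alpha,\beta)$ equals
\[
\sum_L D_L\bigl(\lfloor L\beta\rfloor-\lfloor L\alpha\rfloor\bigr)=(\beta-\alpha)N+\sum_L D_L\bigl(\{L\alpha\}-\{L\beta\}\bigr).
\]
First I would apply this to the permutation induced by $\sigma$ on $Q_{n,k}^{\textrm{tuple}}$, and separately on $Q_{n,k}^{\textrm{set}}$. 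Since the dimensions $n(n-1)\cdots(n-k+1)$ and $\binom nk$ are deterministic, the term $(\beta-\alpha)N$ cancels upon centering, leaving $X_{n,k}^{\textrm{tuple}}-\mathbb E[X_{n,k}^{\textrm{tuple}}]=\sum_L D_L^{\textrm{tuple}}(\{L\alpha\}-\{L\beta\})-\mathbb E\bigl[\sum_L D_L^{\textrm{tuple}}(\{L\alpha\}-\{L\beta\})\bigr]$, where $D_L^{\textrm{tuple}}(\sigma)$ is the number of $\sigma$-orbits of length $L$ on $k$-tuples, and likewise for $k$-subsets.

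The crux is to extract the dominant part of $\sum_L D_L^{\textrm{tuple}}(\{L\alpha\}-\{L\beta\})$. A $\sigma$-orbit of $k$-tuples is determined by the set partition of the $k$ coordinate positions according to which cycle of $\sigma$ they occupy (together with the choice of those cycles and of the positions within them), and a tuple occupying cycles of lengths $\ell_1,\ldots,\ell_r$ lies in an orbit of length $\lcm(\ell_1,\ldots,\ell_r)$. The ``one-block'' orbits --- all $k$ positions inside a single cycle of some length $j$ --- number exactly $C_j^{(n)}(j-1)(j-2)\cdots(j-k+1)$ and have length $j$. Since $(j-1)(j-2)\cdots(j-k+1)=j^{k-1}+O(j^{k-2})$ and $\mathbb E[\sum_j C_j^{(n)}j^{k-2}]=O(n^{k-2}+\log n)$ (using $\sum_j jC_j^{(n)}=n$ together with $\mathbb E[\sum_j C_j^{(n)}]=O(\log n)$), the one-block contribution equals $n^{k-1}Y_{n,k}+o(n^{k-1})$ in $L^1$. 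It remains to see that the multi-block orbits contribute $o(n^{k-1})$: such an orbit has length at least $\ell_1\ell_2/\gcd(\ell_1,\ell_2)$, so the number of them arising from a fixed pair of cycles of lengths $a,b$ is at most a constant times $a^{b_1-1}b^{b_2-1}\gcd(a,b)$ with $b_1+b_2=k$, and analogously for $r\ge 3$; summing over pairs of cycles and invoking the Ewens factorial-moment bound $\mathbb E[C_a^{(n)}C_b^{(n)}]=O(1/(ab))$ together with divisor-sum estimates showing $\sum_{a,b\le n}a^{b_1-2}b^{b_2-2}\gcd(a,b)=o(n^{k-1})$ yields that the expected number of multi-block orbits is $o(n^{k-1})$. \emph{This probabilistic and number-theoretic estimate is the main obstacle.} Combining the above gives $Y_{n,k}^{\textrm{tuple}}=Y_{n,k}-\mathbb E[Y_{n,k}]+o_P(1)$.

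For the set representation the same scheme works with $k$-subsets in place of $k$-tuples: a $k$-subset inside a single $j$-cycle lies in an orbit whose length divides $j$, and equals $j$ for all but $o(j^{k-1})$ of the $\binom jk$ such subsets, producing $\tfrac1j\binom jk=\tfrac1{k!}j^{k-1}+O(j^{k-2})$ one-cycle orbits of length $j$ per $j$-cycle, while the periodic one-cycle subsets and the multi-cycle subsets again contribute only $o(n^{k-1})$ orbits. Hence $X_{n,k}^{\textrm{set}}-\mathbb E[X_{n,k}^{\textrm{set}}]=\tfrac{n^{k-1}}{k!}(Y_{n,k}-\mathbb E[Y_{n,k}])+o_P(n^{k-1})$ --- which is precisely why $Y_{n,k}^{\textrm{set}}$ carries the factor $k!$ --- so $Y_{n,k}^{\textrm{set}}=Y_{n,k}-\mathbb E[Y_{n,k}]+o_P(1)$. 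For $S^{(n-k,1^k)}$ I would use the classical facts $V_{n,1}^{\textrm{tuple}}\cong\mathbf 1\oplus S^{(n-1,1)}$ and $S^{(n-j,1^j)}\cong\Lambda^j(S^{(n-1,1)})$, which together give $X_{n,k}^{\textrm{irrep}}=\sum_{j=0}^{k}(-1)^{k-j}X_{n,j}^{\Lambda}$, where $X_{n,j}^{\Lambda}$ counts the eigenangles of $\Lambda^j\bigl(\rho_{n,1}^{\textrm{tuple}}(\sigma)\bigr)$ in $(\alpha,\beta)$. Now $\Lambda^j\bigl(\rho_{n,1}^{\textrm{tuple}}(\sigma)\bigr)$ is a \emph{signed} permutation matrix on the $\binom nj$ vectors $e_{t_1}\wedge\cdots\wedge e_{t_j}$, acting as $\rho_{n,j}^{\textrm{set}}(\sigma)$ up to a sign $\pm1$ on each orbit; that sign is $+1$ on every generic one-cycle orbit (since $\sigma^j$ fixes pointwise any subset of a $j$-cycle), and there are only $o(n^{j-1})$ orbits on which it could be $-1$ (among the periodic and multi-cycle orbits already counted), each altering the count by $O(1)$, so $X_{n,j}^{\Lambda}=X_{n,j}^{\textrm{set}}+o_P(n^{j-1})$. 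Since $X_{n,j}^{\textrm{set}}-\mathbb E[X_{n,j}^{\textrm{set}}]=O_P(n^{j-1})$ for $2\le j\le k$ (by the argument just given for the set case, with $|Y_{n,j}|\le1$) and $=O_P(\sqrt{\log n})$ for $j=1$ by Wieand's theorem, all terms with $j<k$ are $o_P(n^{k-1})$ after centering; hence $X_{n,k}^{\textrm{irrep}}-\mathbb E[X_{n,k}^{\textrm{irrep}}]=\bigl(X_{n,k}^{\textrm{set}}-\mathbb E[X_{n,k}^{\textrm{set}}]\bigr)+o_P(n^{k-1})$ and $Y_{n,k}^{\textrm{irrep}}=Y_{n,k}-\mathbb E[Y_{n,k}]+o_P(1)$.

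Finally I would check that $\mathbb E[Y_{n,k}]\to0$, which removes the centering. Because $0\le\sum_j j^{k-1}C_j^{(n)}\le n^{k-2}\sum_j jC_j^{(n)}=n^{k-1}$, one has $|Y_{n,k}|\le1$ pointwise; and $n^{k-1}\mathbb E[Y_{n,k}]=\sum_{j=1}^{n}j^{k-1}\mathbb E[C_j^{(n)}]\bigl(\{j\alpha\}-\{j\beta\}\bigr)$, and a summation by parts --- controlling the total mass of the weights $j^{k-1}\mathbb E[C_j^{(n)}]$ via $\sum_j j\mathbb E[C_j^{(n)}]=n$, using the well-distribution of $(j\alpha,j\beta)$ modulo $1$ (a consequence of the $\mathbb Q$-linear independence of $1,\alpha,\beta$), and using $\int_{[0,1)^2}(\{u\}-\{v\})\,du\,dv=0$ --- shows this is $o(n^{k-1})$. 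Consequently $Y_{n,k}^{\textrm{tuple}}$, $Y_{n,k}^{\textrm{set}}$, and $Y_{n,k}^{\textrm{irrep}}$ each equal $Y_{n,k}+o_P(1)$, so whenever $\mathcal L(Y_{n,k})$ has a weak limit, Slutsky's lemma gives that each of the three converges weakly to that same limit.
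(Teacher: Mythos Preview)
Your treatment of the $k$-tuple and $k$-subset cases is essentially the paper's own argument: you isolate the ``all coordinates in one $\sigma$-cycle'' orbits and bound the remaining multi-block orbits via the Ewens factorial moments combined with the divisor/$\gcd$ sum $\sum_{a,b}a^{b_1-2}b^{b_2-2}\gcd(a,b)$, exactly as in the paper's Lemma~\ref{tuplebound} (and its subset analogue via necklaces). One small caveat: for $\theta<1$ the bound $\mathbb{E}[C_a^{(n)}C_b^{(n)}]=O(1/(ab))$ is not quite true uniformly in $a+b\le n$; the Ewens correction factor $\prod_{i=0}^{a+b-1}\frac{n-i}{\theta+n-i-1}$ can be as large as $n/(n-a-b+\theta)$, and the paper handles this by splitting according to whether $n-a-b>\sqrt n$. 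This does not affect your conclusion, but the sentence ``invoking the Ewens factorial-moment bound $\mathbb{E}[C_a^{(n)}C_b^{(n)}]=O(1/(ab))$'' needs that extra step.

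Your approach to $S^{(n-k,1^k)}$, however, is genuinely different from the paper's and arguably more conceptual. The paper invokes Stembridge's eigenvalue formula (Theorem~\ref{stembridge}) to identify the eigenangle multiset of $\rho^{(n-k,1^k)}(\sigma)$ with $E_n^{Q_{n-1,k}^{\textrm{set}}}$, then passes through tuples-with-repeats and an inductive decomposition to reduce to the tuple case. You instead use the classical isomorphism $S^{(n-k,1^k)}\cong\Lambda^k(S^{(n-1,1)})$, write $[S^{(n-k,1^k)}]=\sum_{j=0}^k(-1)^{k-j}[\Lambda^j(V_{n,1}^{\textrm{tuple}})]$ in the representation ring, and observe that $\Lambda^j(\rho_{n,1}^{\textrm{tuple}}(\sigma))$ is a signed permutation matrix on $j$-subsets whose sign is $+1$ on every aperiodic one-cycle orbit (because $\sigma^{\ell}$, with $\ell$ the cycle length---your ``$\sigma^j$'' is a notational slip---is the identity there). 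Since the non-generic orbits are already counted as $o_P(n^{j-1})$ and a sign flip changes the interval count by $O(1)$ per orbit, $X_{n,j}^{\Lambda}=X_{n,j}^{\textrm{set}}+o_P(n^{j-1})$ and the lower-order $j<k$ terms drop out after centering. This route avoids Stembridge's tableaux formula entirely and recycles the subset estimate directly; the paper's route is more combinatorial but self-contained. Both yield the same reduction, and your final step $\mathbb{E}[Y_{n,k}]\to0$ via Abel summation and well-distribution is a valid alternative to the paper's appeal to Lemma~\ref{s=1}.
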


\begin{remark}
In fact, the proof of Theorem \ref{simplifythm} readily shows that a similar result stated in Theorem \ref{simplifythmgeneral} holds for more general linear eigenvalue statistics than the indicator of an interval.  
\end{remark}

Since $\displaystyle{\sum_{j=1}^n j C_j^{(n)} = n}$, it is easy to see that $\displaystyle{ \sum_{j=1}^n j^{k-1} C_j^{(n)} \le n^{k-1}}$. Thus, the distribution of $Y_{n,k}$ is supported on the finite interval $[-1,1]$ for all $n$.  Hence by the method of moments, the sequence converges in distribution as long as the moments converge.  The following theorem gives the limiting moments implicitly in terms of the exponential of a formal power series. 

\begin{theorem} \label{momentsthm}
Under the Ewens distribution with parameter $\theta > 0$, for $k > 1$, $Y_{n,k}$ converges weakly as $n \to \infty$ to some compactly supported limiting distribution $Y_{\infty, k}$.  The moments of $Y_{\infty, k}$ are given implicitly by the following equation in formal power series: 

\begin{equation}\label{power} \sum_{m = 0}^\infty \mathbb{E}[(Y_{\infty, k})^m] \frac{\Gamma(m(k-1)+\theta)}{\Gamma(\theta) m!} z^m = \exp(K(z))
\end{equation} where $\displaystyle{K(z) = \sum_{m=1}^\infty \kappa_{2m} z^{2m}}$ and $\displaystyle{\kappa_{2m} = \frac{2\theta(2m(k-1)-1)!}{(2m+2)!} }$. 

\end{theorem}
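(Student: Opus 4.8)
The plan is to use the method of moments. Because $Y_{n,k}$ is supported in $[-1,1]$ for every $n$, the family is tight and any weak limit is determined by its moments, so it suffices to show that $\mathbb{E}[(Y_{n,k})^m]$ converges for each fixed $m$ and to identify the limit. Writing $a_j:=\{j\alpha\}-\{j\beta\}$ and expanding the power,
\[
\mathbb{E}[(Y_{n,k})^m]=\frac{1}{n^{m(k-1)}}\sum_{j_1,\dots,j_m=1}^{n}(j_1\cdots j_m)^{k-1}\,a_{j_1}\cdots a_{j_m}\,\mathbb{E}\!\big[C^{(n)}_{j_1}\cdots C^{(n)}_{j_m}\big],
\]
so the problem has three inputs: the joint moments of cycle counts under the Ewens measure, the combinatorics of which indices coincide, and the equidistribution of the weights $a_j$.

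I would first group the $m$-fold sum by the set partition $\pi$ of $\{1,\dots,m\}$ recording which $j_s$ are equal: on a block $B\in\pi$ all indices share a common value $\ell_B$, and distinct blocks carry distinct values. The classical Ewens factorial-moment identity
\[
\mathbb{E}\!\Big[\prod_{j}\tbinom{C^{(n)}_j}{b_j}\Big]=\frac{n!}{(n-M)!}\cdot\frac{\Gamma(\theta+n-M)}{\Gamma(\theta+n)}\cdot\prod_j\frac{(\theta/j)^{b_j}}{b_j!},\qquad M=\sum_j jb_j\le n,
\]
together with the expansion of each $\big(C^{(n)}_{\ell_B}\big)^{|B|}$ into falling factorials (Stirling numbers), shows that any term forcing two or more cycles of a common length is of strictly smaller order — each extra forced cycle costs a factor of order $1/n$ in the power count — so the leading part of the $\pi$-sum comes from the $b_B\equiv1$ term. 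For that term, Stirling's asymptotics for ratios of Gamma functions give, when $\ell_B/n\to x_B$,
\[
\mathbb{E}\!\Big[\prod_{B\in\pi}C^{(n)}_{\ell_B}\Big]\sim\theta^{|\pi|}\Big(\prod_{B}\ell_B^{-1}\Big)\Big(1-\textstyle\sum_B x_B\Big)^{\theta-1},
\]
uniformly enough to be fed into a Riemann sum. Putting $\ell_B=nx_B$ and tracking powers of $n$, the contribution of $\pi$ converges — provided the oscillating weights may be replaced by their mean values — to
\[
\theta^{|\pi|}\Big(\prod_{B\in\pi}c_{|B|}\Big)\int_{\Delta_{|\pi|}}\prod_{B}x_B^{(k-1)|B|-1}\Big(1-\textstyle\sum_B x_B\Big)^{\theta-1}\prod_B dx_B,
\]
where $\Delta_r=\{x_i>0:\sum_i x_i<1\}$ and $c_p:=\int_0^1\!\!\int_0^1(u-v)^p\,du\,dv$ is the mean value of $a_\ell^{\,p}$.

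The crux is justifying this last replacement. Fourier-expanding the sawtooth $\{x\}=\tfrac12-\sum_{h\neq0}e^{2\pi ihx}/(2\pi ih)$ exhibits $a_\ell^{\,|B|}$ as a combination (conditionally convergent, to be truncated with a controlled error) of characters $\ell\mapsto e^{2\pi i\ell(h\alpha+h'\beta)}$, $(h,h')\in\mathbb{Z}^2$, the only one with nonzero mean being the constant $c_{|B|}$. By the standing $\mathbb{Q}$-linear independence of $1,\alpha,\beta$, one has $h\alpha+h'\beta\notin\mathbb{Z}$ whenever $(h,h')\neq(0,0)$, so each nonconstant character has uniformly bounded partial sums $\big|\sum_{\ell\le N}e^{2\pi i\ell(h\alpha+h'\beta)}\big|$; partial summation against the bounded-variation simplex weight $\prod_B x_B^{(k-1)|B|-1}(1-\sum_B x_B)^{\theta-1}$ — with a shrinking cutoff near $\sum_B x_B=1$ to absorb the boundary singularity present when $\theta<1$ — shows all nonconstant contributions are $o(n^{|\pi|})$, legitimizing the replacement. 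Since $c_p=0$ for odd $p$ (antisymmetry $u\leftrightarrow v$) and $c_{2a}=\tfrac{2}{(2a+1)(2a+2)}$, only set partitions all of whose blocks have even size survive — in particular all odd moments of $Y_{\infty,k}$ vanish — and the surviving Dirichlet integral evaluates to $\Gamma(\theta)\prod_B\Gamma\!\big((k-1)|B|\big)/\Gamma\!\big((k-1)m+\theta\big)$.

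Assembling the pieces, for every $m$,
\[
\mathbb{E}\!\big[(Y_{\infty,k})^m\big]\,\frac{\Gamma\!\big(m(k-1)+\theta\big)}{\Gamma(\theta)\,m!}=\frac1{m!}\sum_{\substack{\pi\ \text{a partition of}\ \{1,\dots,m\}\\ \text{all blocks of even size}}}\ \prod_{B\in\pi}\theta\,c_{|B|}\,\Gamma\!\big((k-1)|B|\big),
\]
and by the exponential formula the generating function in $z$ of the right-hand side equals $\exp\!\big(\sum_{a\ge1}\theta\,c_{2a}\,\Gamma(2a(k-1))\,z^{2a}/(2a)!\big)$. Using $\Gamma(2a(k-1))=(2a(k-1)-1)!$ and $c_{2a}=\tfrac2{(2a+1)(2a+2)}$, the $z^{2a}$-coefficient of the exponent is exactly $\kappa_{2a}=2\theta(2a(k-1)-1)!/(2a+2)!$, which is \eqref{power}; moment convergence plus the uniform compact support then yields the asserted weak convergence to $Y_{\infty,k}$. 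The main obstacle is the equidistribution step — decoupling the arithmetic weights $a_{\ell_B}$ and replacing them by their averages while the constraint factor $(1-\sum_B\ell_B/n)^{\theta-1}$ genuinely couples the summation variables and is singular at the simplex boundary; the rest is bookkeeping with the Ewens identity, Stirling's formula, and the exponential formula.
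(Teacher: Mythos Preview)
Your overall architecture matches the paper's: method of moments, grouping by the partition recording which indices coincide, reducing via the Ewens factorial-moment identity and Stirling numbers to the ``one cycle per block'' term, recognizing the resulting sum as a Riemann sum for a Dirichlet integral, and assembling the answer with the exponential formula. The one substantive difference is in the equidistribution step --- decoupling the weights $a_{\ell_B}^{|B|}$ from the coupled simplex summation. The paper does this via a multidimensional Szeg\H{o}--P\'olya summability lemma (its Lemma~7.3) together with a discrepancy estimate based on the well-distribution of $(j\alpha,j\beta)$ (Lemma~7.6): one orders the lattice points by the size of the Dirichlet weight, applies Abel summation in that ordering, and reduces to showing that the empirical distribution of $(\ell_B\alpha,\ell_B\beta)$ over the relevant sublevel sets has vanishing discrepancy. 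Your Fourier-expansion-plus-partial-summation route is a legitimate alternative and is morally the same idea, but be careful with the phrase ``uniformly bounded partial sums'': the bound $\bigl|\sum_{\ell\le N}e^{2\pi i\ell\gamma}\bigr|\le 1/(2\|\gamma\|)$ depends on the frequency $\gamma=h\alpha+h'\beta$ and is not uniform in $(h,h')$, so the truncation of the conditionally convergent Fourier series is doing real work and must be quantified. The paper's discrepancy packaging sidesteps this by treating the equidistribution in one stroke rather than mode-by-mode; your approach trades that for a more hands-on (and slightly more delicate) estimate. Either way the singular boundary when $\theta<1$ is handled by a shrinking cutoff, exactly as you note.
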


Remarkably, when $k=2$ and $\theta=1$, an explicit formula for the density of $Y_{\infty, 2}$ can be obtained.  

\begin{corollary} \label{twocor}
For $\theta=1$ (i.e. the uniform measure on $\mathfrak{S}_n$), the random variable $Y_{\infty,2}$ is supported on the interval $[-1,1]$ and has probability density given by the formula:

\begin{equation}\label{probdensity2} 
p_{Y_{\infty, 2}}(t) = \frac{e^{3/2}}{\pi |t|} \left(\frac{1}{|t|} - 1\right)^{-\frac{1}{2} (1 - |t|)^2} \left(\frac{1}{|t|} + 1\right)^{-\frac{1}{2} (1 + |t|)^2} \sin \bigg(\frac{(1-|t|)^2}{2} \pi \bigg) 
\end{equation}
for $-1 \le t \le 1$ \bigg(and by continuity, $\displaystyle{p_{Y_{\infty, 2}}(0) = \frac{e^{3/2}}{\pi}} \bigg)$. 
\end{corollary}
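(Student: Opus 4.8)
The plan is to specialize the power series identity \eqref{power} to $k=2$, $\theta=1$, sum the resulting series in closed form, and then extract the density by recognizing $M(z):=\sum_{m\ge0}\mathbb{E}\!\left[(Y_{\infty,2})^m\right]z^m$ as a reparametrization of the Cauchy--Stieltjes transform of $Y_{\infty,2}$ and applying the Stieltjes inversion formula. For $k=2$, $\theta=1$ the coefficient $\Gamma(m(k-1)+\theta)/(\Gamma(\theta)\,m!)$ in \eqref{power} equals $\Gamma(m+1)/m!=1$, so its left-hand side is exactly $M(z)$, while $\kappa_{2m}=2(2m-1)!/(2m+2)!=1/\bigl(2m(m+1)(2m+1)\bigr)$. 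Inserting the partial fractions $\tfrac{1}{2m(m+1)(2m+1)}=\tfrac{1}{2m}+\tfrac{1}{2(m+1)}-\tfrac{2}{2m+1}$ into $K(z)=\sum_{m\ge1}\kappa_{2m}z^{2m}$ (a genuine analytic function on $|z|<1$, since $\kappa_{2m}\asymp m^{-3}$) reduces it to standard series for $-\log(1-z^2)$ and $\log\frac{1+z}{1-z}$, and a short computation gives
\[
K(z)=\frac{3}{2}-\frac{(1-z)^2}{2z^2}\log(1-z)-\frac{(1+z)^2}{2z^2}\log(1+z),\qquad |z|<1,
\]
hence $M(z)=\exp K(z)=e^{3/2}\,(1-z)^{-(1-z)^2/(2z^2)}(1+z)^{-(1+z)^2/(2z^2)}$.

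Since $Y_{\infty,2}$ is compactly supported in $[-1,1]$, its Cauchy transform $G(w)=\mathbb{E}\bigl[(w-Y_{\infty,2})^{-1}\bigr]$ is holomorphic on $\mathbb{C}\setminus[-1,1]$ and equals $w^{-1}M(1/w)$ for $|w|>1$. Substituting $z=1/w$ and simplifying $(1\mp1/w)^2/\bigl(2(1/w)^2\bigr)=(w\mp1)^2/2$ gives
\[
G(w)=\frac{e^{3/2}}{w}\Bigl(1-\tfrac{1}{w}\Bigr)^{-(w-1)^2/2}\Bigl(1+\tfrac{1}{w}\Bigr)^{-(w+1)^2/2}.
\]
The next step is to check that the apparent singularities of the right-hand side at $w=0,\pm1$ are removable (each exponent carries a factor, namely $(w\mp1)^2$ or $w^2$, that absorbs the logarithmic divergence of the corresponding $\log(1\mp1/w)$ or $\log(1/w)$), that $1\mp1/w$ stays off the principal-log cut $(-\infty,0]$ exactly when $w\notin[-1,1]$, and that $wG(w)\to M(0)=1$ as $w\to\infty$; together these identify the displayed formula as the holomorphic continuation of $G$ to all of $\mathbb{C}\setminus[-1,1]$, with branch cut precisely the support $[-1,1]$.

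It then remains to apply Stieltjes inversion, $p_{Y_{\infty,2}}(t)=-\tfrac1\pi\lim_{\varepsilon\downarrow0}\operatorname{Im}G(t+i\varepsilon)$. For $t\in(0,1)$ and $w=t+i\varepsilon$ one has $1-1/w\to-(1/t-1)+i0^{+}$, i.e.\ just above the negative real axis, while $1+1/w$ and $1/w$ tend to positive reals; so only the factor $(1-1/w)^{-(w-1)^2/2}$ acquires a phase, contributing $e^{-i\pi(1-t)^2/2}$ times the modulus $(1/t-1)^{-(1-t)^2/2}$, whence
\[
G(t+i0^{+})=\frac{e^{3/2}}{t}\,(1/t-1)^{-(1-t)^2/2}(1/t+1)^{-(1+t)^2/2}\,e^{-i\pi(1-t)^2/2},
\]
and $-\tfrac1\pi\operatorname{Im}G(t+i0^{+})$ is exactly \eqref{probdensity2} with $t$ in place of $|t|$. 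Evenness of $K$, hence of the law of $Y_{\infty,2}$, extends the formula to $t\in(-1,0)$ through $|t|$; continuity at the origin follows from $\tfrac{1}{|t|}(1/|t|-1)^{-(1-|t|)^2/2}(1/|t|+1)^{-(1+|t|)^2/2}\to1$ and $\sin(\pi(1-|t|)^2/2)\to1$ as $|t|\to0$, giving $p_{Y_{\infty,2}}(0)=e^{3/2}/\pi$; and $\sin(\pi(1-|t|)^2/2)>0$ on $(-1,1)$ shows the support is all of $[-1,1]$.

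The routine parts are the series summation of the first step and the phase bookkeeping of the last. The main obstacle is the analytic justification in the middle: verifying rigorously that the closed form for $G$, obtained a priori only for $|w|>1$, extends holomorphically across $|w|=1$ to the entire slit plane without spurious branch points or poles, and supplying the regularity that legitimizes Stieltjes inversion on the open support---for instance, locally uniform convergence of $\operatorname{Im}G(t+i\varepsilon)$ on compact subsets of $(-1,1)$, and the normalization $\int_{-1}^{1}p_{Y_{\infty,2}}\,dt=1$ coming from $wG(w)\to1$ at infinity.
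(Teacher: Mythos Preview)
Your proposal is correct and follows essentially the same route as the paper: obtain a closed form for $K(z)$, recognize $G(w)=w^{-1}\exp K(1/w)$ as the Stieltjes transform, analytically continue to $\mathbb{C}\setminus[-1,1]$, and apply Stieltjes inversion. The only cosmetic differences are that the paper sums $K$ via the ODE $(z^2K(z))'''=2z/(1-z^2)$ rather than partial fractions, and computes the boundary limit on $(-1,0)$ instead of $(0,1)$.
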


The rest of the paper is organized as follows.  In sections \ref{ktuple}, \ref{ksubset}, and \ref{irrep}, we prove Theorem \ref{simplifythm} in turn for $Y_{n,k}^{\textrm{tuple}}$, $Y_{n,k}^{\textrm{set}}$, and $Y_{n,k}^{\textrm{irrep}}$.  In section \ref{equidistributed sequences}, we review some basic theory of equidistributed sequences that will be useful for the moment method.  In section \ref{k=1}, we use the method of moments to rederive the asymptotic gaussianity of the normalized eigenangle count in the $k=1$ case of permutation matrices.  Then we move on to the $k>1$ case and prove Theorem \ref{momentsthm} in Section \ref{k>1}.  Section \ref{k=2} proves the density formula for $k=2$ in Corollary \ref{twocor}.  Finally in Section \ref{generalf}, we discuss the extension of Theorem \ref{simplifythm} to more general linear eigenvalue statistics and connect our results to those in \cite{arousdang}.

We end this introduction with a few bibliographic comments regarding the increasing activity in the study of eigenvalues of random permutation matrices over the last two decades.  Wieand extended her Gaussianity results in \cite{wieand} to wreath products in \cite{wieand2}.  Works by Diaconis and Shahshahani \cite{diaconisshah} and Evans \cite{evans} show that the spectrum of permutation matrices and various wreath products under a uniform probability measure converges weakly to the uniform distribution on the circle.  Characteristic polynomials associated to a random permutation matrix were studied in several works, including \cite{cook}, \cite{bahierA}, \cite{hambly}, \cite{zeindler}, and \cite{dangzeindler}.  Najnudel and Nikeghbali \cite{najnudel} and more recently Bahier (\cite{bahierB}, \cite{bahierC}) extend the work of Diaconis, Evans, and Wieand by studying ``randomized'' permutation matrices where each matrix entry equal to 1 is replaced by i.i.d. variables taking values in $\mathbb{C}^*$.  The authors in \cite{hughesnajnudel} study a more general Ewens measure than the one considered by Ben Arous and Dang \cite{arousdang} and in this paper, also obtaining eigenvalue statistics fluctuation results.  Evans \cite{evans2} considers spectra of random matrices involving more general representations of the symmetric group $\mathfrak{S}_n$, but the situation is quite different from ours in that the randomness is not taken over $ \mathfrak{S}_n$.  

\section{The $k$-tuple representations} \label{ktuple}

In this section, we give a proof of Theorem \ref{simplifythm} for $Y_{n,k}^{\textrm{tuple}}$.  First, we give a simple characterization of the spectrum of $\rho_{n,k}^{\textrm{tuple}}(\sigma)$ for $\sigma \in \mathfrak{S}_n$.  Note that the eigenvalues only depend on the cycle structure of $\sigma$ since conjugacy classes in $\mathfrak{S}_n$ are determined by the cycle structure.

When $k=1$ (the defining representation of $\mathfrak{S}_n$), $\rho_{n,1}^{\textrm{tuple}}(\sigma) = M$ where $M$ is the permutation matrix corresponding to $\sigma$, i.e. $M_{ij} = 1$ if $j = \sigma(i)$ and 0 otherwise.  It is easy to see that each $j$-cycle in $\sigma$ corresponds to the set of $j$ eigenangles $\displaystyle{ \left\{0, \frac{1}{j},...,\frac{j-1}{j} \right\} }$.  For each $j$, we have $C_j^{(n)}(\sigma)$ copies of these eigenangles.

Then \begin{equation} X_{n,1}^{\textrm{tuple}}(\sigma) = n(\beta - \alpha) + \sum_{j=1}^n C_j^{(n)}(\sigma) (\{j \alpha\} - \{j \beta \}) \end{equation} where $\{x\}$ denotes the fractional part of $x$.  

More generally, $\rho_{n,k}^{\textrm{tuple}}(\sigma)$ is the $\displaystyle{\frac{n!}{(n-k)!} \times \frac{n!}{(n-k)!}}$ permutation matrix $M$ corresponding to the action of $\sigma$ on $V_{n,k}^{\textrm{tuple}}$.  Here, $M_{(t_1,...,t_k), (u_1,...,u_k)} = 1$ if $(u_1,...,u_k) = \sigma(t_1,...,t_k)$ and 0 otherwise.  Let $\sigma_k^{\textrm{tuple}}$ be the permutation of size $\displaystyle{\frac{n!}{(n-k)!} }$ corresponding to $M$.  Then looking at the cycle structure, we have \begin{equation} \label{Xnktuple} X_{n,k}^{\textrm{tuple}}(\sigma) = \frac{n!}{(n-k)!} (\beta - \alpha) + \sum_{j} C_{j,k}^{(n), \textrm{tuple}} (\sigma) (\{j \alpha\} - \{j \beta \}) \end{equation} where $C_{j,k}^{(n), \textrm{tuple}}(\sigma)$ is the number of cycles in $\sigma_k^{\textrm{tuple}}$ of length $j$.

We will say that an integer $i$ lies in cycle $C$ of the permutation $\sigma$ if $C$ contains $i$ in the cycle decomposition of $\sigma$.  It will turn out that almost all the contribution to the sum in $Y_{n,k}^{\textrm{tuple}}$ comes from the tuples ($t_1,...,t_k)$ such that $t_1,...,t_k$ all lie in the same cycle of $\sigma$.  
\begin{remark}
To reduce confusion, we will sometimes use the terms $\sigma$-cycle and $\sigma_k^{\textrm{tuple}}$-cycle to distinguish between cycles of $\sigma$ and $\sigma_k^{\textrm{tuple}}$ respectively.
\end{remark}
\begin{remark}
To reduce clutter, we will often leave the index off set and sequence notations.  For example, if the index $n$ is understood to run over the range $1 \le n \le N$, then the notation $(a_n)$ should be read as the sequence $(a_1,...,a_N)$.  Similarly, if the index $i$ is understood to run over the range $1 \le i \le m$, then $\{ A_i\}$ should be read as the set $\{A_1,...,A_m\}$.  
\end{remark}

In order to analyze the sum in $Y_{n,k}^{\textrm{tuple}}$, it will help to obtain a partition of the set of $k$-tuples $(t_1,...,t_k)$ defined by the orbits of the action of $\sigma \in \mathfrak{S}_n$.  First, we make the following: 
\begin{definition} 
Let $\displaystyle{[k] = \mathop{\cup}_{i = 1}^m A_i}$ be a partition of $[k]$ into disjoint, nonempty subsets.  Order the sets $A_i$ such that $|A_1| \ge...\ge |A_m| > 0$.  Setting $k_i = |A_i|$, this determines an integer partition $k = k_1+...+k_m$.  Further partition each subset $A_r$ into $p(r)$ disjoint, nonempty subsets $A_{rs}$ such that $|A_{r1}| \ge...\ge |A_{r, p(r)}| > 0$.  This determines an integer partition $\displaystyle{k_r = \sum_{s=1}^{p(r)} k_{rs}}$.  The pair $(\{A_i\}, \{A_{rs}\})$ will be called a double partition of $[k]$.  
\end{definition}

We can now define the following subsets of $Q_{n,k}^\textrm{tuple}$:

\begin{definition} \label{deftuples}
Let $(\{A_i\}, \{A_{rs}\})$ be a double partition of $[k]$ such that $|\{A_i\}| = m$.   Choose a sequence $(i_1,...,i_m)$ of distinct integers from $[n]$.  Then let $T_{\{A_i\}, \{A_{rs}\}}^{\sigma, (i_j)}$ denote the set of $k$-tuples $(t_1,...,t_k)$ of distinct integers such that the integers $t_{j}$ where $j \in A_r$ are all in $\sigma$-cycles of length $i_r$ and moreover, integers $t_{a}$ and $t_b$ are in the \textit{same} $\sigma$-cycle of length $i_r$ iff $a$ and $b$ are in the same subset $A_{rs}$ of $A_r$.  Taking the union over $\sigma$-cycle lengths, we also define \begin{equation}T_{\{A_i\}, \{A_{rs}\}}^{\sigma} = \bigcup_{i_1 \neq... \neq i_m} T_{\{A_i\}, \{A_{rs}\}}^{\sigma, (i_j)} \end{equation} where $i_1 \neq... \neq i_m$ is shorthand for $i_1,...,i_m$ all distinct.
\end{definition}
For each $\sigma \in \mathfrak{S}_n$, the set $ \Big\{T_{\{A_i\}, \{A_{rs}\}}^{\sigma}: (\{A_i\}, \{A_{rs}\}) \text{ a double partition of }[k]  \Big\}$ forms a partition of $Q_{n,k}^\textrm{tuple}$.  Note that the number of parts in this partition is only a function of $k$ and does not grow with $n$.  Thus, we can consider separately the limiting contribution of each part $T_{\{A_i\}, \{A_{rs}\}}^{\sigma} $ to the spectrum of $\rho_{n,k}^{\textrm{tuple}}(\sigma)$.

It is clear that $\sigma_k^{\textrm{tuple}}$ acts on $T_{\{A_i\}, \{A_{rs}\}}^{\sigma, (i_j)}$ and that each tuple $(t_1,...,t_k) \in T_{\{A_i\}, \{A_{rs}\}}^{\sigma, (i_j)}$ lies in a $\sigma_k^{\textrm{tuple}}$-cycle of length $[i_1,...,i_m]$.  (Here, $[i_1,...,i_m]$ denotes the least common multiple of integers $i_1,...,i_m$).  Thus, for each choice of double partition $(\{A_i\}, \{A_{rs}\})$ and each sequence of $\sigma$-cycle lengths $(i_1,...,i_m)$, the elements of $T_{\{A_i\}, \{A_{rs}\}}^{\sigma, (i_j)} $ form \begin{equation}\label{numform} \frac{1}{[i_1,...,i_m]} \prod_{r=1}^m  \big(C_{i_r}^{(n)}(\sigma) \big)^{\underline{p(r)}} \prod_{s=1}^{p(r)} i_r^{\underline{k_{rs}}} \end{equation}  $\sigma_k^{\textrm{tuple}}$-cycles of size $[i_1,...,i_m]$ where the polynomial $x^{\underline{n}} := x(x-1)...(x-n+1)$ (often called the $n$th falling factorial).

The following lemma shows that the only non-negligible contribution to $Y_{n,k}^{\textrm{tuple}}$ in the limit $n \to \infty$ comes from the $\sigma_k^{\textrm{tuple}}$-cycles containing tuples $(t_1,...,t_k)$ such that $t_1,...,t_k$ are all in the same $\sigma$-cycle.   

\begin{lemma}\label{tuplebound} Let $(\{A_i\}, \{A_{rs}\})$ be a double partition of $[k]$.  If $\displaystyle{\sum_{r=1}^m p(r) > 1}$, then \begin{equation} \lim_{n \to \infty} \frac{1}{n^{k-1}} \mathbb{E} \bigg[\sum_{i_1 \neq... \neq i_m} \frac{1}{[i_1,...,i_m]} \prod_{r=1}^m  \big(C_{i_r}^{(n)} \big)^{\underline{p(r)}} \prod_{s=1}^{p(r)} i_r^{\underline{k_{rs}}} \bigg] = 0 \end{equation}   
\end{lemma}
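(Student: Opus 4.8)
The plan is to estimate every term of the sum by a direct computation of the joint falling-factorial moments of the cycle counts under the Ewens measure, and then to argue separately in the two cases $P>m$ and $P=m$, where $m$ is the number of distinct cycle lengths and $P:=\sum_{r=1}^{m}p(r)$.

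Since all summands are nonnegative I would first interchange the expectation with the finite sum and bound each term on its own; only the factor $\prod_{r=1}^{m}(C_{i_r}^{(n)})^{\underline{p(r)}}$ is random. Writing $b:=\sum_{r=1}^{m}i_r\,p(r)$ and $\theta^{\overline{q}}:=\theta(\theta+1)\cdots(\theta+q-1)$, summing the cycle-type probabilities of the Ewens measure gives, for distinct $i_1,\dots,i_m$ with $b\le n$,
\begin{equation*}
\mathbb{E}\Big[\prod_{r=1}^{m}(C_{i_r}^{(n)})^{\underline{p(r)}}\Big]=\frac{n!\,\theta^{\overline{n-b}}}{(n-b)!\,\theta^{\overline{n}}}\prod_{r=1}^{m}\Big(\frac{\theta}{i_r}\Big)^{p(r)},
\end{equation*}
and the expectation vanishes if $b>n$. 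The correction factor $\frac{n!\,\theta^{\overline{n-b}}}{(n-b)!\,\theta^{\overline{n}}}=\prod_{i=0}^{b-1}\frac{n-i}{\theta+n-b+i}$ equals $1$ at $\theta=1$ and is decreasing in $\theta$, hence is $\le 1$ when $\theta\ge 1$; when $\theta<1$ it is at most $\frac{n!}{\theta^{\overline{n}}}=\Gamma(\theta)\,\Gamma(n+1)/\Gamma(n+\theta)=O(n^{1-\theta})$. In either regime it is at most $C_{\theta}\,n^{\max(0,1-\theta)}$. Using in addition $\prod_{s=1}^{p(r)}i_r^{\underline{k_{rs}}}\le i_r^{k_r}$ and setting $e_r:=k_r-p(r)\ge 0$ (so that $\sum_{r}e_r=k-P$), each term in the lemma is bounded by
\begin{equation*}
C_{\theta}\,\theta^{k}\,n^{\max(0,1-\theta)}\,\frac{1}{[i_1,\dots,i_m]}\prod_{r=1}^{m}i_r^{e_r},
\end{equation*}
so it is enough to show $\sum_{i_1\neq\cdots\neq i_m}[i_1,\dots,i_m]^{-1}\prod_{r}i_r^{e_r}=O(n^{k-2}\log^{3}n)$: indeed, summing such bounds and dividing by $n^{k-1}$ then yields $O(n^{\max(0,1-\theta)-1}\log^{3}n)=O(n^{-\min(1,\theta)}\log^{3}n)$, which tends to $0$ since $\theta>0$.

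If $P>m$ — equivalently some $p(r)\ge 2$ — the trivial bound $[i_1,\dots,i_m]\ge i_1$ already suffices: dropping the distinctness constraint, the sum is at most $\big(\sum_{i_1\le n}i_1^{e_1-1}\big)\prod_{r\ge 2}\big(\sum_{i_r\le n}i_r^{e_r}\big)\lesssim n^{e_1}(\log n)\prod_{r\ge 2}n^{e_r+1}=n^{\sum_{r}e_r+m-1}\log n$, and since $P\ge m+1$ the exponent is $k-P+m-1\le k-2$. If instead $P=m$, then every $p(r)=1$ and, because $\sum_{r}p(r)>1$, necessarily $m\ge 2$; here $e_r=k_r-1$ and the naive bound $[i_1,\dots,i_m]\ge\max_r i_r$ only yields $O(n^{k-1})$, so one must use that the least common multiple of distinct integers is large. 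From $[i_1,\dots,i_m]\ge[i_1,i_2]=i_1i_2/\gcd(i_1,i_2)$, writing $d=\gcd(i_1,i_2)$, $i_1=da$, $i_2=db$, and discarding coprimality and distinctness, the sum is at most
\begin{equation*}
\Big(\sum_{d\le n}d^{k_1+k_2-3}\Big(\sum_{a\le n/d}a^{k_1-2}\Big)\Big(\sum_{b\le n/d}b^{k_2-2}\Big)\Big)\prod_{r\ge 3}\Big(\sum_{i_r\le n}i_r^{k_r-1}\Big).
\end{equation*}
Since each $k_r\ge 1$ one has $\sum_{a\le M}a^{k_1-2}\lesssim M^{k_1-1}\log M$, so the $d$-sum is $\lesssim n^{k_1+k_2-2}(\log^{2}n)\sum_{d\le n}d^{-1}\lesssim n^{k_1+k_2-2}\log^{3}n$, while $\prod_{r\ge 3}\sum_{i_r\le n}i_r^{k_r-1}\lesssim n^{\sum_{r\ge 3}k_r}$; the product is $O(n^{k-2}\log^{3}n)$, as needed.

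I expect the case $P=m$ to be the main obstacle: there the bound $[i_1,\dots,i_m]\ge\max_r i_r$ sits exactly at the threshold $n^{k-1}$, and one genuinely has to exploit the $\gcd$-factorization of the lcm to gain the extra power of $n$, whereas for $P>m$ a whole power of $n$ is already to spare. A secondary technical nuisance is the uniform-in-$b$ control of the Ewens correction factor when $\theta<1$, where it can be as large as $n^{1-\theta}$; this is absorbed because in both cases the combinatorial estimate is already smaller than $n^{k-1}$ by at least a factor $n/\log^{3}n$.
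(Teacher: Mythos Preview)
Your argument is correct; the core mechanism --- computing the joint falling-factorial moment via the Ewens/Watterson formula and, in the delicate case, exploiting $[i_1,\dots,i_m]\ge i_1i_2/\gcd(i_1,i_2)$ through a sum over $d=\gcd(i_1,i_2)$ --- is the same as the paper's. The differences are organizational and in the handling of the Ewens correction factor. The paper keeps the sharper bound $\prod_{i=0}^{l-1}\frac{n-i}{\theta+n-i-1}\le \frac{n}{n-l+\theta}$ and then splits each sum according to whether $n-\sum i_r p(r)$ exceeds $\sqrt{n}$ or not, which costs a factor $n^{1/2}$ and leads to an $O(n^{k-3/2}\log^2 n)$ estimate; you instead use the cruder uniform bound $C_\theta n^{\max(0,1-\theta)}$, avoid the $\sqrt{n}$-splitting entirely, and compensate with a cleaner combinatorial estimate $O(n^{k-2}\log^3 n)$ for the deterministic sum. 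Your case split ($P>m$ versus $P=m$) is also slightly different from the paper's ($m=1$ versus $m>1$): yours isolates exactly the borderline situation where the trivial bound $[i_1,\dots,i_m]\ge \max_r i_r$ fails, whereas the paper folds the $P>m,\ m>1$ subcase into its general $m>1$ argument. One cosmetic slip: the constant you record should be $\theta^{P}$ rather than $\theta^{k}$, but since the double partition is fixed this is absorbed into $C_\theta$ and does not affect the proof.
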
  
  
\begin{proof}  
First, we compute the expectation $\displaystyle{\mathbb{E}\bigg[ \prod_{r=1}^m  \big(C_{i_r}^{(n)} \big)^{\underline{p(r)}} \bigg]}$, often referred to as a factorial moment.

The following moment formula was established by Watterson \cite{watterson} (see e.g. \cite[(5.6)]{a}): Let $W_1,...,W_n$ be independent Poisson random variables with $\mathbb{E}[W_i] = \theta/i$ and $b_1,...,b_n$ be non-negative integers and set $l = b_1+2b_2+...+n b_n$.  Then \begin{equation}\label{diaconispoisson} \mathbb{E} \bigg[\prod_{j=1}^n (C_j^{(n)})^{\underline{b_j}} \bigg] = \mathds{1}(l \le n) \mathbb{E} \bigg[\prod_{j=1}^n W_j^{\underline{b_j}} \bigg] \prod_{i=0}^{l-1} \frac{n-i}{\theta+n-i-1} 
\end{equation}

Also, recall that if $X$ follows a Poisson distribution with parameter $\lambda$, the factorial moments are given by $\mathbb{E}[X^{\underline{n}}] = \lambda^n$.

Using these results, we can now compute the expectation in the lemma.  Note that 
\[ \prod_{i=0}^{l-1} \frac{n-i}{\theta+n-i-1} \le \frac{n}{n-l+\theta} \]

Then summing over all sequences $(i_1,...,i_m)$ of distinct integers in $[n]$, we get (assuming $\displaystyle{\sum_{r=1}^m p(r) > 1}$)
\begin{align}
&\nonumber \mathbb{E} \bigg[\sum_{i_1 \neq... \neq i_m} \frac{1}{[i_1,...,i_m]} \prod_{r=1}^m  \big(C_{i_r}^{(n)} \big)^{\underline{p(r)}} \prod_{s=1}^{p(r)} i_r^{\underline{k_{rs}}} \bigg] \\
\le &\nonumber \sum_{i_1 \neq ...\neq i_m} \frac{1}{[i_1,...,i_m]}  \frac{n}{n-\sum i_r p(r) +\theta} \prod_{r=1}^m \left( \frac{\theta}{i_r} \right)^{p(r)} \prod_{s=1}^{p(r)} i_r^{k_{rs}} \\ 
\le &\label{sum2} A(\theta) \sum_{\substack{i_1 \neq ...\neq i_m \\ \sum i_r p(r) < n}} \frac{1}{[i_1,...,i_m]}  \frac{n}{n-\sum i_r p(r)} \prod_{r=1}^m i_r^{k_r - p(r)} 
\end{align}
for some constant $A(\theta)$.  

If $m = 1$, splitting the sum according to whether $n - i_1 p(1) > \sqrt{n}$ or $n - \sum i_1 p(1) \le \sqrt{n}$ shows that \eqref{sum2} is clearly of order $O(n^{k-3/2})$ and the lemma follows.  If $m>1$, we have (using the notation $(i,j):= \gcd(i,j)$ )

\begin{align*}
&\sum_{\substack{i_1 \neq ...\neq i_m \\ \sum i_r p(r) < n}} \frac{1}{[i_1,...,i_m]}  \frac{n}{n-\sum i_r p(r)} \prod_{r=1}^m i_r^{k_r - p(r)}  \\
\le & \sum_{\substack{i_1 \neq ...\neq i_m \\ \sum i_r p(r) < n}} \frac{1}{[i_1,i_2]}  \frac{n}{n-\sum i_r p(r)} \prod_{r=1}^m i_r^{k_r - p(r)}   \\
< & \; n^{k - k_1 - k_2+1/2} \sum_{i \neq j}\frac{i^{k_1 - 1} j^{k_2 - 1}}{[i,j]}\\
= & \; 2n^{k - k_1 - k_2+1/2} \sum_{1 \le i < j \le n} i^{k_1 - 2} j^{k_2 - 2} (i,j) \\
= & \; 2n^{k - k_1 - k_2+1/2} \sum_{\substack{1 \le i < j \le n \\ (i,j)=1}} \sum_{d \le \frac{n}{j}} (d i)^{k_1 - 2} (d j)^{k_2 - 2} d \\
< & \; 2n^{k - k_1 - k_2+1/2} \sum_{1 \le i < j \le n} \sum_{d \le \frac{n}{j}} d^{k_1+k_2-3} i^{k_1 - 2} j^{k_2 - 2} \\
< & \; 2n^{k - k_1 - k_2+1/2} \sum_{i=1}^n\sum_{j=i}^n \left(\frac{n}{j}\right)^{k_1+k_2-2} i^{k_1 - 2} j^{k_2 - 2} \\
< & \; 2n^{k - 3/2} \sum_{i=1}^n \sum_{j=i}^n \frac{i^{k_1 - 2}}{j^{k_1}}
\end{align*} 
Here, the second step is derived by splitting the sum according to whether $n - \sum i_r p(r) > \sqrt{n}$ or $n - \sum i_r p(r) \le \sqrt{n}$.  The desired result then follows from the fact that \[\sum_{i=1}^n \sum_{j=i}^n \frac{i^{k_1 - 2}}{j^{k_1}} = O(\log^2 n) \]
\end{proof}

Lemma \ref{tuplebound} shows that the only cycles of $\sigma_k^{\textrm{tuple}}$ that contribute to $Y_{n,k}^{\textrm{tuple}}$ in the limit $n \to \infty$ are those formed from tuples in the set $T_{\{A_i\}, \{A_{rs}\}}^{\sigma}$ such that the double partition $(\{A_i\}, \{A_{rs}\})$ of $[k]$ is trivial, i.e. $\{A_{rs} \}$ consists of the single set $[k]$.  Borrowing the result from Lemma \ref{s=1}, we see that $\displaystyle{\lim_{n \to \infty} \mathbb{E}[Y_{n,k}] = 0}$. Plugging $m=1$ into the expression \eqref{numform} then proves Theorem \ref{simplifythm} for $Y_{n,k}^{\textrm{tuple}}$.

\section{The $k$-subset representations} \label{ksubset}

Now we prove Theorem \ref{simplifythm} for $Y_{n,k}^{\textrm{set}}$. For each $\sigma \in \mathfrak{S}_n$, let $\sigma_k^{\textrm{set}}$ be the permutation of size $\displaystyle{ \binom{n}{k} }$ corresponding to $\rho_{n,k}^{\textrm{set}}(\sigma)$.

Similar to the ordered tuple case, the eigenvalue distribution is given by \begin{equation} \label{Xnkset} X_{n,k}^{\textrm{set}} = \binom{n}{k} (\beta - \alpha) + \sum_{j} C_{j,k}^{(n), \textrm{set}} (\{j \alpha\} - \{j \beta \}) 
\end{equation} where $C_{j,k}^{(n), \textrm{set}}(\sigma)$ is the number of cycles in $\sigma_k^{\textrm{set}}$ of length $j$.

As in the previous section, we will see that almost all of the contribution to the sum in $Y_{n,k}^{\textrm{set}}$ comes from the subsets $\{t_1,...,t_k\}$ such that $t_1,...,t_k$ are all in the same cycle of $\sigma$.  Although the argument is similar to the ordered tuple case, a few subtleties arise.
      
For each $\sigma \in \mathfrak{S}_n$, we wish to partition $Q_{n,k}^\textrm{set}$  according to the number of elements $t_i$ in each $\sigma$-cycle.  Unlike the ordered tuple case, instead of double partitioning the set $[k]$ we proceed by directly double partitioning the integer $k$.  

\begin{definition} Let $k = k_1+...+k_m$ such that $k_1 \ge ...\ge k_m \ge 1$ be a partition of the integer $k$.  Then, for each part $k_r$ choose a subpartition $\displaystyle{k_r = \sum_{s=1}^{p(r)} k_{rs}}$ such that $k_{r1} \ge ... \ge k_{r, p(r)} \ge 1$.  We can also denote the subpartition by a sequence $(c_{r,1},...,c_{r,k_r})$ such that $\displaystyle{\sum_{i=1}^{k_r} i c_{r,i} = k_r}$.  Here, $c_{r,i}$ represents the number of subparts of $k_r$ of size $i$.  We will call the array $(k_{rs})$ where $1 \le r \le m$ and $1 \le s \le p(r)$ a double partition of $k$.
\end{definition}

We can define the following subsets of $Q_{n,k}^\textrm{set}$ analogously to Definition \ref{deftuples}:

\begin{definition}
Let $(k_{rs})$ be a double partition of $k$ such that $r$ runs over the range $1 \le r \le  m$.  Choose a sequence $(i_1,...,i_m)$ of distinct integers from $[n]$.  Then let $T_{(k_{rs})}^{\sigma, (i_j) }$ denote the set of $k$-subsets $\{t_1,...,t_k\}$ such that for $1 \le r \le m$ and $1 \le s \le p(r)$,  $k_{rs}$ of the elements $t_i$ lie in the same $\sigma$-cycle of length $i_r$.  Taking the union over $\sigma$-cycle lengths, we also define \begin{equation} T_{(k_{rs})}^{\sigma} = \bigcup_{i_1 \neq... \neq i_m} T_{(k_{rs})}^{\sigma, (i_j) } 
\end{equation} 
\end{definition}

For each $\sigma \in \mathfrak{S}_n$, the set $\displaystyle{\Big\{ T_{(k_{rs})}^{\sigma}: (k_{rs}) \text{ a double partition of } k \Big\} }$ forms a partition of $Q_{n,k}^\textrm{set}$.  As before, we can consider each part $T_{(k_{rs})}^{\sigma} $ individually since the number of parts in this partition is only a function of $k$ and does not grow with $n$. 

To write the formula analogous to \eqref{numform} for the number of $\sigma_k^{\textrm{set}}$-cycles formed from $k$-subsets $\{t_1,...,t_k\} \in T_{(k_{rs})}^{\sigma, (i_j) }$, it will be helpful to introduce the notion of binary necklaces from combinatorics.  

\begin{definition} A binary necklace of length $n$ is an equivalence class of strings of 0's and 1's of length $n$ that are identified under rotation i.e. in the same orbit under action of the cyclic group $\mathbb{Z}/n\mathbb{Z}$.  The period of a necklace is the size of the corresponding equivalence class of strings, i.e. the period of a representative string.  

Let $N_{i,k}$ be the number of binary length $i$ necklaces with exactly $k$ ones and let $N_{i, k}^{d}$ denote the number of such necklaces of period $d$.  Finally, let $L_{i,k} := N_{i, k}^{i}$ denote the number of aperiodic necklaces of length $i$ with $k$ ones.  
\end{definition}
We will see shortly that for large $n$, almost all necklaces are aperiodic, i.e. have period $n$.  

For each subset $\{t_1,...,t_k\} \in T_{(k_{rs})}^{\sigma, (i_j) }$, for $1 \le r \le m$ and $1 \le s \le p(r)$, let $C_{rs}^{ \{t_i \}}$ be (one of) the $\sigma$-cycle(s) of length $i_r$ containing $k_{rs}$ elements of $\{t_1,...,t_k\}$.  We can identify $C_{rs}^{ \{t_i \}}$ with a binary necklace by giving the label 1 to the $k_{rs}$ elements of $\{t_1,...,t_k\}$ that lie in the cycle and giving the label 0 to the rest of the numbers in the cycle.  For example, if $C_{rs}^{ \{t_i \}}$ is the cycle $(3 5 2 9 1 4)$ and the subset of $k_{rs}$ elements of $\{t_1,...,t_k\}$ that lie in the cycle is $\{2,4\}$, then the induced binary necklace is $001001$.  Let $d_{rs}^{ \{t_i \}}$ denote the period of the binary necklace $C_{rs}^{ \{t_i \}}$. Then we see that each $k$-subset $\{t_1,...,t_k\} \in T_{(k_{rs})}^{\sigma,  (i_j) }$ lies in a $\sigma_k^{\textrm{set}}$-cycle of length $\big[\big(d_{rs}^{ \{t_i \}} \big) \big]$ where $\big[ (a_n) \big]$ denotes the least common multiple of all the elements in the sequence $(a_n)$.

Fix a double partition $(k_{rs})$ of $k$ and let $(i_1,...,i_m)$ be a sequence of $\sigma$-cycle lengths.  Let $(d_{rs})$ be an array of non-negative integers where the indices $r$ and $s$ run over the same range as $(k_{rs})$.  Then the $k$-subsets $\{t_1,...,t_k \} \in T_{(k_{rs})}^{\sigma, (i_j) }$ such that $d_{rs}^{ \{t_i\}} = d_{rs}$ form \begin{equation} \frac{1}{[(d_{ab})]} \prod_{r=1}^m \frac{\big(C_{i_r}^{(n)}(\sigma) \big)^{\underline{p(r)}}}{\prod_{i=1}^{k_r} c_{r,i}! } \prod_{s=1}^{p(r)} d_{rs} N_{i_r, k_{rs}}^{d_{rs}} \end{equation} 
$\sigma_k^{\textrm{set}}$-cycles of length $[(d_{ab})]$.

Note that $\displaystyle{N_{i_r, k_{rs}}^{d_{rs}} = L_{d_{rs}, \frac{k_{rs}d_{rs}}{i_r}}}$.  Clearly, this can only be non-zero if $i_r \bigm| k_{rs}d_{rs}$, i.e. $i_r f_{rs} = k_{rs}d_{rs}$ for some integer $f_{rs}$.  Since $d_{rs} \bigm| i_r$, we have $i_r = d_{rs} g_{rs}$ for some integer $g_{rs}$.  Putting this together, $k_{rs} = f_{rs} g_{rs}$.  Also, we have the trivial bound $\displaystyle{L_{i, k} \le \frac{1}{i} \binom{i}{k}}$.

Let $\mathbf{D}$ denote the set of all arrays of non-negative integers $(d_{rs})$ such that $N_{i_r, k_{rs}}^{d_{rs}} \neq 0$, i.e. such that each array entry $d_{rs}$ is a valid period.  Let $\mathbf{G}$ denote the set of all arrays of non-negative integers $(g_{rs})$ such that $g_{rs} \bigm| k_{rs}$.  (For both arrays, the indices run over the range $1 \le r \le m$ and $1 \le s \le p(r)$).  Then the number of $\sigma_k^{\textrm{set}}$ cycles formed from the elements of $T_{(k_{rs})}^{\sigma, (i_j) }$ is 
\begin{equation} \label{subsetform}
\sum_{(d_{ab}) \in \mathbf{D}} \frac{1}{[(d_{ab})]} \prod_{r=1}^m \frac{\big(C_{i_r}^{(n)}(\sigma) \big)^{\underline{p(r)}}}{\prod_{i=1}^{k_r} c_{r,i}! } \prod_{s=1}^{p(r)} d_{rs} N_{i_r, k_{rs}}^{d_{rs}}
\end{equation}

We have the following lemma analogous to Lemma \ref{tuplebound}.

\begin{lemma}\label{subsetbound} Let the array $(k_{rs})$ where $1 \le r \le m$ and $1 \le s \le p(r)$ be a double partition of $k$.  If $\displaystyle{\sum_{r=1}^m p(r) > 1}$, then \begin{equation} \lim_{n \to \infty} \frac{1}{n^{k-1}} \mathbb{E} \bigg[\sum_{i_1 \neq... \neq i_m} \sum_{(d_{ab}) \in \mathbf{D}} \frac{1}{[(d_{ab})]} \prod_{r=1}^m \frac{\big(C_{i_r}^{(n)}(\sigma) \big)^{\underline{p(r)}}}{\prod_{i=1}^{k_r} c_{r,i}! } \prod_{s=1}^{p(r)} d_{rs} N_{i_r, k_{rs}}^{d_{rs}} \bigg] = 0 \end{equation}   
\end{lemma}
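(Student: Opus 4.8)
The plan is to mirror the proof of Lemma~\ref{tuplebound} and, in fact, to reduce the estimate to the bound \eqref{sum2} obtained there. First apply Watterson's formula \eqref{diaconispoisson}: since $i_1,\dots,i_m$ are distinct, the relevant factorial moment $\mathbb{E}\big[\prod_{r=1}^m(C_{i_r}^{(n)})^{\underline{p(r)}}\big]$ vanishes unless $l:=\sum_{r=1}^m i_r p(r)<n$ and is otherwise at most $\prod_{r}(\theta/i_r)^{p(r)}\cdot\frac{n}{n-l+\theta}$, using $\prod_{j=0}^{l-1}\frac{n-j}{\theta+n-j-1}\le\frac{n}{n-l+\theta}$. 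Since this factor does not involve the array $(d_{ab})$ and since $\prod_i c_{r,i}!\ge1$, the expectation in the lemma is at most $A(\theta)\sum_{i_1\neq\cdots\neq i_m,\, l<n}\frac{n}{n-l}\prod_{r=1}^m i_r^{-p(r)}\,S_{(i_j)}$, where
\[ S_{(i_j)}:=\sum_{(d_{ab})\in\mathbf{D}}\frac{1}{[(d_{ab})]}\prod_{r=1}^m\prod_{s=1}^{p(r)}d_{rs}N_{i_r,k_{rs}}^{d_{rs}} \]
and $A(\theta)$ is a constant (it may depend on $k$, since $\sum_r p(r)\le k$). So the whole lemma comes down to bounding the necklace sum $S_{(i_j)}$ and matching the result against \eqref{sum2}.

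I would use two elementary facts about binary necklaces: (i) partitioning the length-$i$ binary strings with $j$ ones into necklace orbits gives $\sum_{d\mid i}d\,N_{i,j}^d=\binom{i}{j}$; and (ii) for $1\le j\le k$, any period $d$ occurring in such a string satisfies $\frac{i}{\gcd(i,j)}\mid d$, hence $d\ge\frac{i}{\gcd(i,j)}\ge\frac{i}{k}$, and therefore $N_{i,j}=\sum_d N_{i,j}^d\le\frac{\gcd(i,j)}{i}\binom{i}{j}\le\frac{k}{i}\binom{i}{j}$. For the least common multiple in $S_{(i_j)}$ the crude bound $[(d_{ab})]\ge\max_r i_r/k$ is too weak to recover \eqref{sum2}; instead I would set $e_1:=\lcm(d_{11},\dots,d_{1,p(1)})$, note $e_1\ge(\prod_s d_{1s})^{1/p(1)}$, and use $[(d_{ab})]\ge e_1$ when $m=1$ and $[(d_{ab})]\ge[e_1,d_{21}]\ge e_1 d_{21}/\gcd(i_1,i_2)$ when $m\ge2$ (valid since $e_1\mid i_1$, $d_{21}\mid i_2$). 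As $\mathbf{D}$ is a product set, feeding these lower bounds on $[(d_{ab})]^{-1}$ into $S_{(i_j)}$ lets the sum factorize over the index pairs $(r,s)$, with each block-$1$ factor picking up an extra $d^{-1/p(1)}$ and, when $m\ge2$, one block-$2$ factor picking up an extra $d^{-1}$. Fact (i) bounds the untouched factors by $\sum_d d\,N_{i_r,k_{rs}}^d=\binom{i_r}{k_{rs}}\le i_r^{k_{rs}}$; fact (ii) bounds the block-$1$ factors by $(k/i_1)^{1/p(1)}\binom{i_1}{k_{1s}}$ and the distinguished block-$2$ factor by $N_{i_2,k_{21}}\le\frac{k}{i_2}\binom{i_2}{k_{21}}$. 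Collecting everything (and using $\sum_s k_{rs}=k_r$, $\sum_r k_r=k$) yields, for a constant $C_k$ depending only on $k$,
\[ S_{(i_j)}\;\le\; C_k\,i_1^{\,k-1}\ \ (m=1),\qquad S_{(i_j)}\;\le\; C_k\,\gcd(i_1,i_2)\,i_1^{\,k_1-1}i_2^{\,k_2-1}\!\!\prod_{r\ge3}i_r^{\,k_r}\ \ (m\ge2). \]

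Substituting these bounds and using $\gcd(i_1,i_2)/(i_1i_2)=1/[i_1,i_2]$, the generic summand $A(\theta)\frac{n}{n-l}\prod_r i_r^{-p(r)}S_{(i_j)}$ is bounded, up to a $(\theta,k)$-dependent constant, by $\frac{n}{n-l}\,i_1^{\,k-1-p(1)}$ when $m=1$ and by $\frac{1}{[i_1,i_2]}\frac{n}{n-l}\prod_{r=1}^m i_r^{\,k_r-p(r)}$ when $m\ge2$. The hypothesis $\sum_r p(r)>1$ forces $p(1)\ge2$ when $m=1$ (so $i_1^{\,k-1-p(1)}\le i_1^{\,k-3}$) and is automatic when $m\ge2$; in both cases, summing over $i_1\neq\cdots\neq i_m$ with $l<n$ gives precisely the sums that the proof of Lemma~\ref{tuplebound} bounds by $O(n^{k-3/2}\log^2 n)=o(n^{k-1})$. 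Dividing by $n^{k-1}$ and letting $n\to\infty$ finishes the proof.

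The crux is the estimate of $S_{(i_j)}$, i.e. choosing the right sub-collection of periods against which to lower-bound $[(d_{ab})]$: all block-$1$ periods combined through their geometric mean (to absorb multiplicities, in particular handling $m=1$), together with one block-$2$ period coupled via $\gcd(i_1,i_2)$ (to reproduce the $1/[i_1,i_2]$ gain of the $k$-tuple estimate). Once the lcm is treated this way and the sum factorizes, facts (i) and (ii) make the remaining estimates routine and the problem collapses onto the already-proved bound \eqref{sum2}.
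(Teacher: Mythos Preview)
Your argument is correct and, like the paper's, reduces the subset estimate to the bound \eqref{sum2} already established in Lemma~\ref{tuplebound}. The route you take to control the necklace sum $S_{(i_j)}$ is genuinely different, however. The paper does not split on $m$ or invoke a geometric-mean lower bound on the lcm. Instead it reparametrizes each period as $d_{rs}=i_r/g_{rs}$ with $g_{rs}\mid k_{rs}$ (so the sum over $\mathbf{D}$ becomes a sum over the finite set $\mathbf{G}$ of divisor arrays), bounds $d_{rs}N_{i_r,k_{rs}}^{d_{rs}}\le\binom{d_{rs}}{k_{rs}d_{rs}/i_r}$ via the trivial bound on aperiodic necklaces, and then uses the two elementary inequalities $\binom{n/a}{m/a}\le\binom{n}{m}$ and $[pm_1,m_2]\le p[m_1,m_2]$ to obtain $\frac{1}{[(d_{ab})]}\le\frac{\prod_{r,s}k_{rs}}{[i_1,\dots,i_m]}$ uniformly in $m$. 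This delivers the same summand $\frac{1}{[i_1,\dots,i_m]}\prod_r i_r^{k_r-p(r)}$ (up to a $k$-dependent constant) in one stroke, with no case analysis. Your approach trades this uniform divisor argument for the orbit identity $\sum_d dN_{i,j}^d=\binom{i}{j}$ together with the period lower bound $d\ge i/\gcd(i,j)$; these are natural combinatorial facts, and the geometric-mean trick $e_1\ge(\prod_s d_{1s})^{1/p(1)}$ is a nice way to make the $m=1$ case factorize. The paper's version is shorter and avoids the $m=1$/$m\ge2$ split, but both land on exactly the same estimate and invoke Lemma~\ref{tuplebound} to finish.
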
  
  
\begin{proof} 
\begin{equation*}
\begin{aligned}
&\sum_{(d_{ab}) \in \mathbf{D}} \frac{1}{[(d_{ab})]} \prod_{r=1}^m \frac{\big(C_{i_r}^{(n)}(\sigma) \big)^{\underline{p(r)}}}{\prod_{i=1}^{k_r} c_{r,i}! } \prod_{s=1}^{p(r)} d_{rs} N_{i_r, k_{rs}}^{d_{rs}}   \\
\le & \sum_{(d_{ab}) \in \mathbf{D}} \frac{1}{[(d_{ab})]} \prod_{r=1}^m \frac{\big(C_{i_r}^{(n)}(\sigma) \big)^{\underline{p(r)}}}{\prod_{i=1}^{k_r} c_{r,i}! } \prod_{s=1}^{p(r)} \binom{d_{rs} }{ \frac{k_{rs}d_{rs}}{i_r}}   \\
= & \sum_{(g_{ab}) \in \mathbf{G}} \frac{1}{[(i_a/g_{ab} ) ]} \prod_{r=1}^m \frac{\big(C_{i_r}^{(n)}(\sigma) \big)^{\underline{p(r)}}}{\prod_{i=1}^{k_r} c_{r,i}! } \prod_{s=1}^{p(r)} \binom{i_r/g_{rs} }{ k_{rs}/g_{rs} }  \\
< & \frac{1}{[i_1,...,i_m]} \sum_{(g_{ab}) \in \mathbf{G}} \prod_{r=1}^m \big(C_{i_r}^{(n)}(\sigma) \big)^{\underline{p(r)}} \prod_{s=1}^{p(r)} \binom{i_r}{ k_{rs}} k_{rs} \\
= & \frac{|\mathbf{G}|}{[i_1,...,i_m]} \prod_{r=1}^m \big(C_{i_r}^{(n)}(\sigma) \big)^{\underline{p(r)}} \prod_{s=1}^{p(r)} \binom{i_r }{ k_{rs}} k_{rs}
\end{aligned}
\end{equation*}
since $\displaystyle{\binom{n}{ m} \le \binom{a n}{ a m}}$ for $a \ge 1$ and $[p m_1, m_2] \le p [m_1, m_2]$.

The result then follows from  Lemma \ref{tuplebound}. 

\end{proof}

Thus, just as for the $k$-tuple case, the only $\sigma_k^{\textrm{set}}$-cycles that contribute to $Y_{n,k}^{\textrm{set}}$ in the limit $n \to \infty$ are those formed from $k$-subsets in $T_{(k_{rs})}^{\sigma}$ such that the double partition $(k_{rs})$ of $k$ is trivial, i.e. $(k_{rs})$ consists of just one part of size $k$.  Since $\displaystyle{N_{j, k}^{d} \le \frac{1}{d} \binom{d}{ k d/j}}$, it is easy to see that for large $j$, almost all necklaces of length $j$ with $k$ ones are aperiodic, i.e. both $N_{j, k}$ and $L_{j, k}$ are asymptotically $\displaystyle{\frac{j^{k-1}}{k!} + O(j^{k-2})}$.  Plugging $m=1$ into the expression \eqref{subsetform} then proves Theorem \ref{simplifythm} for $Y_{n,k}^{\textrm{set}}$.   

\begin{remark}
Exact formulas for $N_{n,i}$ and $L_{n,i}$ are known:

\begin{equation} L_{n,i} = \frac{1}{n} \sum_{d \mid (n, i)} \mu(d) \binom{n/d}{ i/d} \end{equation}
\begin{equation} N_{n,i} = \frac{1}{n} \sum_{d \mid (n,i)} \varphi(d) \binom{n/d }{ i/d} \end{equation}
where $\mu(d)$ is the M\"{o}bius function and $\varphi$ is Euler's totient function. Derivation of these formulas and other results about necklaces can be found in e.g. \cite{bender, ruskey, sawada}.

\end{remark}

\section{The $S^{(n-k,1^k)}$ irreducible representation} \label{irrep}

In this section, we finally prove Theorem \ref{simplifythm} for the $S^{(n-k,1^k)}$ irreducible representation of the symmetric group $\mathfrak{S}_n$.  First, we briefly review some basic facts from the representation theory of symmetric groups.

\subsection{Basics of symmetric group theory}

It is well known that every complex representation of a finite group is completely reducible, i.e. is the direct sum of irreducible representations.  This follows from the fact that finite-dimensional unitary representations of any group are completely reducible and Weyl's unitary trick which shows that every finite dimensional representation of a finite group is unitarisable.  Then the eigenvalue distribution of any finite group representation is simply a mixture of the eigenvalue distributions for each irreducible representation in the direct sum.  
 
Thus, to understand the eigenvalue distributions of representations of the symmetric group, another perspective is to try to understand the irreducible representations.  These representations are indexed by the partitions of $n$, often denoted $\lambda \vdash n$.  We can visualize a partition $\lambda$ by drawing its diagram, which is a configuration of boxes arranged in left-justified rows such that there are $\lambda_i$ boxes in the $i^{\textrm{th}}$ row.

\begin{definition} Given a partition $\lambda \vdash n$, a Young tableau of shape $\lambda$ is obtained by placing the integers $[n]$ into the diagram for $\lambda$ (so that each number appears exactly once).  Clearly, there are $n!$ Young $\lambda$-tableaux.  A standard Young tableau is a tableau such that the entries are strictly increasing in each row and each column.  If $\lambda, \mu \vdash n$, a semistandard tableau of shape $\lambda$ and type $\mu$ is a tableau where the entries are weakly increasing along each row and strictly increasing down each column such that the number $i$ appears $\mu_i$ times. 
\end{definition}
One can consider an equivalence relation on the set of $\lambda$-tableaux such that $t_1 \sim t_2$ if $t_1$ and $t_2$ contain the same elements in each row.  Each equivalence class $\{t \}$ under this relation is called a tabloid.  Thus, a tabloid is a tableau that only cares about rows.  

The action of $\mathfrak{S}_n$ on tabloids induces the permutation representation  on a vector space with basis $e_{\{t \}}$ in the usual way.  These $\frac{n!}{\lambda_1!...\lambda_r!}$ dimensional representations are denoted by $M^\lambda$ for each partition $\lambda \vdash n$ and called the permutation module corresponding to $\lambda$.  
Using this terminology, the permutation representation on ordered $k$-tuples is equivalent to the permutation module $M^{(n-k,1^k)}$ and the permutation representation on unordered $k$-subsets is equivalent to the permutation module $M^{(n-k,k)}$.   

One can find the irreps in the permutation modules $M^\lambda$.  Define for each tableau $t$ a polytabloid $e_t \in M^\lambda$ by $\displaystyle{e_t = \sum_{\pi \in C_t} \sgn(\pi) e_{\pi\{t\}}}$ where $C_t$ is the subgroup of $\mathfrak{S}_n$ that stabilizes columns of $t$.  Then the subspace of $M^\lambda$ spanned by the $\{e_t\}$ is called the Specht module $S^\lambda$.  As $\lambda$ ranges over the partitions of $n$, $S^\lambda$ give all the irreps of $\mathfrak{S}_n$.  The set of polytabloids $\{e_t : t \text{ is a standard } \lambda\text{-tableau}\}$ is a basis for $S^\lambda$.  Thus, we see that the dimension of $S^\lambda$ is the number of standard $\lambda$-tableaux.  The celebrated hook-length formula gives a formula for this number. 

Young's rule gives a method of determining which irreducible subrepresentations are present in the permutation module $M^\lambda$. 

\begin{lemma}[Young's Rule] \label{Young}
The multiplicity of $S^\lambda$ in $M^\mu$ is the Kostka number $K_{\lambda \mu}$, which is the number of semistandard tableau with shape $\lambda$ and type $\mu$. 
\end{lemma}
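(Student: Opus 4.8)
The plan is to recast the statement through symmetric function theory via the Frobenius characteristic map; the combinatorics of Kostka numbers then enters at a single clean point. Since $\mathbb{C}\mathfrak{S}_n$ is semisimple (the complete reducibility already recalled above), the multiplicity $m_{\lambda\mu}$ of the irreducible $S^\lambda$ in $M^\mu$ is the inner product $\langle \chi_{M^\mu},\chi^\lambda\rangle$ of characters, where $\chi^\lambda$ is the irreducible character afforded by $S^\lambda$. Recall the characteristic map $\operatorname{ch}$, which sends a class function $\psi$ on $\mathfrak{S}_n$ to the degree-$n$ symmetric function $\sum_{\rho\vdash n} z_\rho^{-1}\psi(\rho)\,p_\rho$, with $p_\rho$ the power-sum symmetric function and $z_\rho$ the centraliser order of a permutation of cycle type $\rho$. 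I would use two standard properties: $\operatorname{ch}$ is a linear isometry onto the degree-$n$ part of the ring of symmetric functions equipped with the Hall inner product, and it carries induction from Young subgroups to multiplication of symmetric functions.

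First I would compute $\operatorname{ch}(\chi_{M^\mu})$. Since $M^\mu\cong\operatorname{Ind}_{\mathfrak{S}_\mu}^{\mathfrak{S}_n}\mathbf{1}$ for the Young subgroup $\mathfrak{S}_\mu=\mathfrak{S}_{\mu_1}\times\mathfrak{S}_{\mu_2}\times\cdots$, multiplicativity gives $\operatorname{ch}(\chi_{M^\mu})=\prod_i\operatorname{ch}(\mathbf{1}_{\mathfrak{S}_{\mu_i}})$, and the classical identity $\sum_{\rho\vdash d}z_\rho^{-1}p_\rho=h_d$ (the complete homogeneous symmetric function) shows $\operatorname{ch}(\mathbf{1}_{\mathfrak{S}_d})=h_d$, so that $\operatorname{ch}(\chi_{M^\mu})=h_\mu:=h_{\mu_1}h_{\mu_2}\cdots$. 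The substantive input is the identification $\operatorname{ch}(\chi^\lambda)=s_\lambda$ with the Schur function, and I expect this to be the main obstacle: it is essentially equivalent to the Frobenius character formula. One way to obtain it without circularity is to observe that $\{\operatorname{ch}(\chi^\lambda)\}_{\lambda\vdash n}$ is an orthonormal basis (image of the orthonormal basis $\{\chi^\lambda\}$ of class functions under the isometry), that $\operatorname{ch}(\chi_{M^\lambda})=h_\lambda$ lies in $\operatorname{ch}(\chi^\lambda)+\operatorname{span}\{\operatorname{ch}(\chi^\nu):\nu\rhd\lambda\}$ with nonnegative integer coefficients (because, by the submodule theorem for Specht modules, $S^\lambda$ occurs in $M^\lambda$ exactly once and every other constituent $S^\nu$ satisfies $\nu\rhd\lambda$ in dominance order), and that this relation inverts to $\operatorname{ch}(\chi^\lambda)\in h_\lambda+\operatorname{span}\{h_\nu:\nu\rhd\lambda\}$ with $\langle\operatorname{ch}(\chi^\lambda),h_\lambda\rangle=1>0$; since the Schur functions are characterised as the unique orthonormal basis with $s_\lambda\in h_\lambda+\operatorname{span}\{h_\nu:\nu\rhd\lambda\}$ and $\langle s_\lambda,h_\lambda\rangle>0$, matching the two characterisations forces $\operatorname{ch}(\chi^\lambda)=s_\lambda$.

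Granting the two computations, the isometry property yields
\[
m_{\lambda\mu}=\langle\operatorname{ch}(\chi_{M^\mu}),\operatorname{ch}(\chi^\lambda)\rangle=\langle h_\mu,s_\lambda\rangle .
\]
The combinatorics enters only here. Since the basis $\{h_\mu\}$ of complete homogeneous symmetric functions and the basis $\{m_\lambda\}$ of monomial symmetric functions are dual under the Hall inner product, $\langle h_\mu,s_\lambda\rangle$ equals the coefficient of $m_\mu$ in the monomial expansion of $s_\lambda$; and by the combinatorial definition $s_\lambda=\sum_T x^T$, summed over semistandard tableaux $T$ of shape $\lambda$ with $x^T:=\prod_i x_i^{\,\#\{i\text{'s in }T\}}$, this coefficient is exactly the number of semistandard tableaux of shape $\lambda$ and type $\mu$, namely $K_{\lambda\mu}$. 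This gives $m_{\lambda\mu}=K_{\lambda\mu}$.

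Alternatively, one can avoid symmetric functions and argue purely combinatorially in the spirit of James: for each semistandard $\lambda$-tableau $T$ of type $\mu$ one builds a ``semistandard homomorphism'' $\widehat{\Theta}_T\in\operatorname{Hom}_{\mathfrak{S}_n}(S^\lambda,M^\mu)$ (roughly, a map $M^\lambda\to M^\mu$ sending a $\lambda$-tabloid to the sum of the $\mu$-tabloids compatible with the incidence pattern recorded by $T$, restricted from $M^\lambda$ to $S^\lambda$); a dominance-triangularity argument on the ``leading'' tabloid of $\widehat{\Theta}_T(e_{t^\lambda})$ shows these homomorphisms are linearly independent, and the submodule theorem together with the Garnir relations shows they span $\operatorname{Hom}_{\mathfrak{S}_n}(S^\lambda,M^\mu)$, whose dimension is then $K_{\lambda\mu}$ and equals $m_{\lambda\mu}$ by semisimplicity. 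In that route the spanning step is the main obstacle.
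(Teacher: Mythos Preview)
Your proposal is correct. The paper does not give its own proof of Young's Rule; it simply cites \cite[Prop.~7.18.7]{stanley}. Stanley's argument there is precisely the symmetric-function route you outline first: apply the Frobenius characteristic map, use $\operatorname{ch}(\chi_{M^\mu})=h_\mu$ and $\operatorname{ch}(\chi^\lambda)=s_\lambda$, and read off the multiplicity as $\langle h_\mu,s_\lambda\rangle=K_{\lambda\mu}$ from the monomial expansion of Schur functions. So your primary approach matches the cited reference essentially verbatim; your alternative James-style argument via semistandard homomorphisms is also correct and is the standard combinatorial proof, but it is not the one the paper points to.
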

For a proof of Lemma \ref{Young}, see e.g. \cite[Prop. 7.18.7]{stanley}.  By Young's rule, we see for instance that $\displaystyle{M^{(n-2,1,1)} = S^{(n)} \oplus 2S^{(n-1,1)} \oplus S^{(n-2,2)} \oplus S^{(n-2,1,1)}}$.  In general, $S^{(n-k,1^k)}$ appears as an irrep of $M^{(n-k,1^k)}$ with multiplicity 1.  

The decomposition of $M^{(n-k,k)}$ into irreducibles is particularly easy to describe.  We have $M^{(n-k,k)} = S^{(n)} \oplus S^{(n-1,1)} \oplus...\oplus S^{(n-k,k)}$.  

More information about symmetric group theory can be found in any number of references.  A few are \cite{diaconisbook, james, sagan, silberstein}.  

\subsection{Eigenvalue distributions of irreducible representations}

Stembridge \cite{stembridge} has found an explicit formula for the eigenvalues of any irreducible representation of the symmetric group in terms of Young tableaux.  In the following, we borrow terminology from \cite{stembridge}.  First, we introduce the notion of a descent set. 

\begin{definition}
Let $T$ be a standard Young tableau.  If $k+1$ appears in a row strictly below $k$ in $T$, then $k$ is said to be a descent of $T$.  We write $D(T)$ for the set of descents in $T$.  
\end{definition}
Figures \ref{fig:sub1} and \ref{fig:sub2} give the descent sets for a standard tableau of shape $(6,1,1,1)$ and another of shape $(4,3,2)$.  

Let $\mu \vdash n$ be the cycle type (i.e. list of cycle lengths in the cycle decomposition in non-increasing order) of $\sigma \in \mathfrak{S}_n$.  Let $\rho_{n,1}^{\textrm{tuple}}$ be the defining representation of $\mathfrak{S}_n$.  Then we define $b_\mu = (b_\mu(1),...,b_\mu(n))$ to be the vector of eigenangles of $\rho_{n,1}^{\textrm{tuple}}(\sigma)$ listed by cycle.  For example, $\displaystyle{b_{(4,4,3,2)} = \left(\frac{1}{4},\frac{2}{4},\frac{3}{4},1,\frac{1}{4},\frac{2}{4},\frac{3}{4},1,\frac{1}{3},\frac{2}{3},1,\frac{1}{2},1 \right)}$.  

Now we can state Stembridge's formula for the eigenvalues:

\begin{theorem}[Stembridge] \label{stembridge}
Let $\rho^\lambda$ be the representing map corresponding to the irrep $S^\lambda$.  The eigenangles of $\rho^\lambda(\sigma)$ (counted with multiplicity) are indexed by standard Young $\lambda$-tableaux $T$ and given by $\displaystyle{\sum_{i \in D(T)} b_\mu(i)}$ where the sum is taken mod 1.
\end{theorem}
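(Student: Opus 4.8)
The plan is to identify the two eigenvalue multisets by matching all of their power sums. A unitary matrix of finite order is determined up to conjugacy by the traces of its powers (the multiplicity of each eigenvalue is the discrete Fourier transform of $N\mapsto\operatorname{tr}(A^N)$), so, writing $\mu$ for the cycle type of $\sigma$ and $\chi^\lambda=\operatorname{tr}\rho^\lambda$, it suffices to prove for every $N\ge 1$ that
\begin{equation*}
\chi^\lambda(\sigma^N)=\sum_{T\in\mathrm{SYT}(\lambda)}\exp\!\Big(2\pi i N\sum_{j\in D(T)}b_\mu(j)\Big).
\end{equation*}
Listing the blocks of $b_\mu$ cycle by cycle, the inner exponent is $\sum_i \mathrm{maj}_i(T)/\mu_i$, where $\mathrm{maj}_i(T)$ collects the positions of those descents of $T$ lying in the $i$-th block; a descent at the last position of a block contributes an integer and so is invisible $\bmod 1$.

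First I would treat the case where $\sigma$ is a single $m$-cycle $c$. Then $b_{(m)}(j)=j/m$, so $\sum_{j\in D(T)}b_{(m)}(j)=\mathrm{maj}(T)/m$ and the claim becomes $\chi^\lambda(c^N)=\sum_{T}e^{2\pi i N\,\mathrm{maj}(T)/m}=f^\lambda(e^{2\pi i N/m})$, where $f^\lambda(q):=\sum_{T\in\mathrm{SYT}(\lambda)}q^{\mathrm{maj}(T)}$ is the fake-degree (major-index) polynomial. This is the Kraśkiewicz--Weyman theorem: it follows from Stanley's principal specialization $\prod_{j=1}^{m}(1-q^j)\cdot s_\lambda(1,q,q^2,\dots)=f^\lambda(q)$ together with the evaluation of $f^\lambda$ at roots of unity (equivalently, the cyclic sieving phenomenon for $\mathrm{SYT}(\lambda)$ under promotion). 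I would invoke this as the one substantial external input rather than reprove it.

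Next I would reduce the general case to the cyclic one by restricting $S^\lambda$ to the Young subgroup $\mathfrak{S}_{C_1}\times\cdots\times\mathfrak{S}_{C_\ell}$ cut out by the cycles $C_i$ of $\sigma$. Since $\sigma$ lies in this subgroup, acting on $C_i$ as a $\mu_i$-cycle $c_i$, and since $\langle s_\lambda,\prod_i p_{\mu_i}^{(N)}\rangle=\sum_{(\lambda^i)}c^\lambda_{\lambda^1\cdots\lambda^\ell}\prod_i\langle s_{\lambda^i},p_{\mu_i}^{(N)}\rangle$ with $\lambda^i\vdash\mu_i$ and $p_{\mu_i}^{(N)}$ the power sum of the cycle type of $c_i^N$ (this is just the iterated Littlewood--Richardson expansion of the restriction, $\operatorname{Res}S^\lambda=\bigoplus_{(\lambda^i)}(S^{\lambda^1}\boxtimes\cdots\boxtimes S^{\lambda^\ell})^{\oplus c^\lambda_{\lambda^1\cdots\lambda^\ell}}$), the cyclic case yields
\begin{equation*}
\chi^\lambda(\sigma^N)=\sum_{(\lambda^i)}c^\lambda_{\lambda^1\cdots\lambda^\ell}\prod_{i=1}^\ell f^{\lambda^i}\!\big(e^{2\pi i N/\mu_i}\big).
\end{equation*}
On the combinatorial side I would decompose each $T\in\mathrm{SYT}(\lambda)$ along the chain of shapes $\varnothing=\lambda^{(0)}\subset\lambda^{(1)}\subset\cdots\subset\lambda^{(\ell)}=\lambda$ obtained by cutting the entries of $T$ into consecutive blocks of sizes $\mu_1,\dots,\mu_\ell$; this is a bijection of $\mathrm{SYT}(\lambda)$ with pairs (a chain, a tuple of standardized skew tableaux $U_i\in\mathrm{SYT}(\lambda^{(i)}/\lambda^{(i-1)})$), under which $\mathrm{maj}_i(T)\equiv\mathrm{maj}(U_i)\pmod{\mu_i}$. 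Using the skew Gessel expansion $s_{\alpha/\beta}=\sum_U Q_{D(U)}$ and the same principal specialization, one gets $\sum_{U\in\mathrm{SYT}(\alpha/\beta)}q^{\mathrm{maj}(U)}=\sum_\kappa c^{\alpha}_{\beta\kappa}f^\kappa(q)$; summing over chains and regrouping by the associativity of Littlewood--Richardson coefficients turns the combinatorial sum into exactly the displayed formula for $\chi^\lambda(\sigma^N)$, and matching power sums finishes the proof.

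Beyond the Kraśkiewicz--Weyman input, the hard part will be this last piece of bookkeeping: checking that the chain decomposition of standard tableaux is compatible with the major index block by block and only modulo $\mu_i$, that the skew descent sum refines exactly along Littlewood--Richardson coefficients, and that the regrouping reproduces the iterated Littlewood--Richardson expansion term for term — with the ``$\equiv 0\bmod 1$'' boundary descents tracked consistently throughout. Everything else is either a standard symmetric-function identity or the elementary Fourier-inversion reduction at the start.
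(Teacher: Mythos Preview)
The paper does not prove this theorem at all; it is quoted as a result of Stembridge and cited to \cite{stembridge}. So there is no in-paper argument to compare against.

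Your outline is essentially correct and is close in spirit to how the result is usually proved. The Fourier-inversion reduction at the start is fine: a unitary matrix of finite order is determined up to conjugacy by the traces of its powers. For the single-cycle case, what you call the Kra\'skiewicz--Weyman theorem is exactly the right input, and your identification $\sum_{j\in D(T)} b_{(m)}(j)=\mathrm{maj}(T)/m$ is immediate from the definition of $b_\mu$. For the general case, the block decomposition of a standard tableau along consecutive intervals of sizes $\mu_1,\dots,\mu_\ell$ does give standardized skew tableaux $U_i\in\mathrm{SYT}(\lambda^{(i)}/\lambda^{(i-1)})$ with $\mathrm{maj}_i(T)=\mathrm{maj}(U_i)$ (not merely $\equiv$), since a descent at position $\mu_1+\cdots+\mu_i$ contributes $b_\mu=1\equiv 0$; the skew major-index generating function indeed expands as $\sum_\kappa c^{\alpha}_{\beta\kappa}f^\kappa(q)$ via Gessel's quasisymmetric expansion and the principal specialization; and the chain regrouping is precisely the associativity identity $c^\lambda_{\lambda^1\cdots\lambda^\ell}=\sum_{\text{chains}}\prod_i c^{\lambda^{(i)}}_{\lambda^{(i-1)}\lambda^i}$. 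With those three checks written out carefully, your proof goes through.
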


\begin{figure}
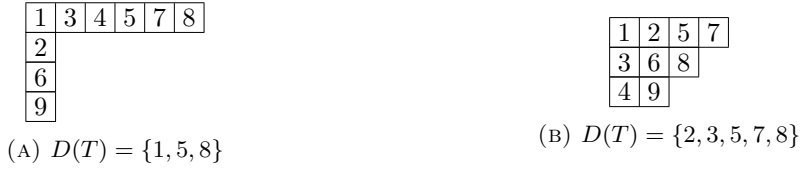

\centering
\begin{subfigure}{.5\textwidth}
\centering
\young(134578,2,6,9)
\caption{$D(T) = \{1, 5, 8\}$}
\label{fig:sub1}
\end{subfigure}%
\begin{subfigure}{.5\textwidth}
\centering
\young(1257,368,49)
\caption{$D(T) = \{2, 3, 5, 7, 8\}$}
\label{fig:sub2}
\end{subfigure}
\caption{Descent sets of two Young tableau}
\end{figure}
   
Now, we turn to the eigenvalue distribution of $\rho^{(n-k,1^k)}$.  It is easy to check that

\begin{proposition} \label{stem}
For $T$ running over all standard Young tableaux of shape $(n-k,1^k)$, we have \[ \{D(T)\} = Q_{n-1,k}^{\textrm{set}} \]
\end{proposition}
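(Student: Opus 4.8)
The plan is to make the bijection between standard Young tableaux of shape $(n-k,1^k)$ and $k$-subsets of $[n-1]$ explicit, and then simply read off the descent set from this description.

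First I would pin down the combinatorial structure of a standard Young tableau $T$ of shape $(n-k,1^k)$. The diagram of $(n-k,1^k)$ is a hook: a first row of $n-k$ boxes together with a first column of $k+1$ boxes, the two overlapping in the corner box $(1,1)$. Standardness forces the entry $1$ into the corner. The remaining entries $\{2,\dots,n\}$ are then distributed between the $n-k-1$ non-corner boxes of the first row and the $k$ boxes of the first column lying below the corner; within each of these two arms the entries must increase (left to right along the row, top to bottom down the column). Hence $T$ is completely determined by the $k$-subset $S \subseteq \{2,\dots,n\}$ of entries occupying the column below the corner, and conversely every such $S$ determines a standard tableau. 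In particular there are $\binom{n-1}{k}$ such tableaux, consistent with $\dim S^{(n-k,1^k)}$.

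Next I would compute $D(T)$ in terms of $S$. Fix $i \in [n-1]$; by definition $i \in D(T)$ iff the row containing $i+1$ is strictly below the row containing $i$. Note that $j \in \{2,\dots,n\}$ lies in row $1$ when $j \notin S$ and in some row $\ge 2$ when $j \in S$ (and $1$ is in row $1$). Running through the cases according to membership of $i$ and $i+1$ in $S$: if $i+1 \in S$ then $i+1$ sits in a row $\ge 2$ while $i$ sits either in row $1$ (if $i \notin S$) or strictly higher in the same column (if $i \in S$, since the column entries increase downward), so $i \in D(T)$ in both subcases; if $i+1 \notin S$ then $i+1$ lies in row $1$, which is not strictly below the row of $i$ in either subcase, so $i \notin D(T)$. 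Therefore $i \in D(T) \iff i+1 \in S$, i.e. $D(T) = \{\, s-1 : s \in S \,\}$.

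Finally, since $S \mapsto \{\, s-1 : s \in S \,\}$ is a bijection from the $k$-subsets of $\{2,\dots,n\}$ onto the $k$-subsets of $[n-1]$, and the $k$-subsets of $[n-1]$ are exactly the elements of $Q_{n-1,k}^{\textrm{set}}$, I would conclude that as $T$ ranges over the standard Young tableaux of shape $(n-k,1^k)$, the descent set $D(T)$ ranges bijectively over $Q_{n-1,k}^{\textrm{set}}$, which is the assertion. There is no real obstacle here; the only point requiring any care is the case analysis in the second step, in particular the observation that the $k$ column entries occupy rows $2,\dots,k+1$ in increasing order, so that two consecutive integers both lying in the column automatically form a descent.
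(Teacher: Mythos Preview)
Your proof is correct. The paper itself does not give a proof of this proposition, merely stating that ``it is easy to check''; your argument is exactly the natural way to fill in this claim, making explicit the bijection between standard hook tableaux and $k$-subsets of $\{2,\dots,n\}$ and then reading off the descent set.
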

   
Let $\mu = (\mu_1,\mu_2,...,\mu_l)$ be the cycle type of $\sigma \in \mathfrak{S}_n$.  For any set of sets (or tuples) $U$, define 
\begin{equation} \label{etuple} E_n^U(\sigma) := \bigg\{\sum_{i \in S} b_\mu(i): S \in U \bigg\} \end{equation} 

By Theorem \ref{stembridge} and Proposition \ref{stem}, the multiset of eigenangles of $\rho^{(n-k,1^k)}(\sigma)$ is $\displaystyle{E_n^{Q_{n-1,k}^{\textrm{set}} }(\sigma) }$.  It will be easier to work our way towards $\displaystyle{E_n^{Q_{n-1,k}^{\textrm{set}} } }$ by first considering $\displaystyle{E_n^{Q_{n,k}^{\textrm{tuple*}} }}$ where $Q_{n,k}^\textrm{tuple*}$ is the set of $k$-tuples allowing repeats.

By the same reasoning as for $Y_{n,k}^{\textrm{tuple}}$, it is easy to see (just replace $\prod i_r^{\underline{k_{rs}}}$ with $\prod i_r^{k_{rs}}$ in Lemma \ref{tuplebound}):

\begin{lemma} \label{Qtuplestar}
As $n \to \infty$, $\displaystyle{ \frac{\big| E_n^{Q_{n,k}^\textrm{tuple*}} \cap I \big| - \mathbb{E}\big| E_n^{Q_{n,k}^\textrm{tuple*}} \cap I \big|}{n^{k-1}}}$ has the same limiting law as $Y_{n,k}$.
\end{lemma}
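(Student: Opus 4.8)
The plan is to exploit the fact that $E_n^{Q_{n,k}^{\textrm{tuple*}}}(\sigma)$ is itself the multiset of eigenangles of a permutation representation: $\sigma$ acts on the set $[n]^k$ of ordered $k$-tuples with repetitions allowed, and the corresponding permutation matrix is $\rho_{n,1}^{\textrm{tuple}}(\sigma)^{\otimes k}$, whose eigenvalues are precisely the $e^{2\pi i(b_\mu(s_1)+\dots+b_\mu(s_k))}$ appearing in \eqref{etuple} for $U=Q_{n,k}^{\textrm{tuple*}}$. Writing $\sigma_k^{\textrm{tuple*}}$ for this permutation of $[n]^k$ and $C_{j,k}^{(n),\textrm{tuple*}}(\sigma)$ for the number of its cycles of length $j$, the reasoning that produced \eqref{Xnktuple} gives
\begin{equation*}
\bigl| E_n^{Q_{n,k}^{\textrm{tuple*}}}(\sigma) \cap I \bigr| = n^k(\beta-\alpha) + \sum_{j} C_{j,k}^{(n),\textrm{tuple*}}(\sigma)\bigl(\{j\alpha\}-\{j\beta\}\bigr).
\end{equation*}
The first term is deterministic, so after centering it suffices to show that $\tfrac{1}{n^{k-1}}\bigl(\sum_j C_{j,k}^{(n),\textrm{tuple*}}(\{j\alpha\}-\{j\beta\}) - \mathbb{E}[\,\cdot\,]\bigr)$ differs from $Y_{n,k}$ by a quantity tending to $0$ in probability.

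Next I would rerun the bookkeeping of Section \ref{ktuple} with $[n]^k$ in place of $Q_{n,k}^{\textrm{tuple}}$. Partitioning $[n]^k$ by exactly the cycle--incidence data of Definition \ref{deftuples} — for each coordinate the length of its $\sigma$-cycle, organized into a double partition $(\{A_i\},\{A_{rs}\})$ of $[k]$, plus a specific $\sigma$-cycle for each block — the sole change is that the $k_{rs}$ coordinates of a block $A_{rs}$ may now be sent into a length-$i_r$ cycle \emph{with repetitions}, so that in \eqref{numform} the falling factorial $i_r^{\underline{k_{rs}}}$ is replaced by the ordinary power $i_r^{k_{rs}}$; the formula stays exact, since every coordinate of a block still lies in a single $\sigma$-cycle of length $i_r$ and hence each such tuple still has $\sigma_k^{\textrm{tuple*}}$-period exactly $[i_1,\dots,i_m]$. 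Because $\prod_{s} i_r^{k_{rs}} = i_r^{k_r}$, just as $\prod_s i_r^{\underline{k_{rs}}} \le i_r^{k_r}$, the estimates in the proof of Lemma \ref{tuplebound} carry over unchanged: for every double partition with $\sum_r p(r) > 1$, the expected total number of the resulting $\sigma_k^{\textrm{tuple*}}$-cycles is $o(n^{k-1})$. Since this count is nonnegative and $|\{j\alpha\}-\{j\beta\}|\le 1$, Markov's inequality makes the contribution of all such double partitions — and of its mean — negligible in probability.

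Finally I would treat the one surviving trivial double partition ($m=1$, $p(1)=1$, $A_{11}=[k]$), where \eqref{numform} with the above substitution evaluates to exactly $i_1^{k-1}C_{i_1}^{(n)}$: the $i_1^{k}$ tuples contained in a fixed $\sigma$-cycle of length $i_1$ each have $\sigma_k^{\textrm{tuple*}}$-period exactly $i_1$ and thus form $i_1^{k-1}$ cycles of length $i_1$, with no lower-order term. Hence $\tfrac{1}{n^{k-1}}\bigl(\sum_j C_{j,k}^{(n),\textrm{tuple*}}(\{j\alpha\}-\{j\beta\}) - \mathbb{E}[\,\cdot\,]\bigr) = Y_{n,k} - \mathbb{E}[Y_{n,k}] + o_{\mathbb{P}}(1)$, and since $\mathbb{E}[Y_{n,k}]\to 0$ by Lemma \ref{s=1} while all variables are supported on $[-1,1]$, the claim follows. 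I expect the only real obstacle to be the verification in the middle paragraph — namely that permitting repetitions introduces no genuinely new combinatorial effect. In contrast to the periodicity subtleties that forced the necklace machinery of Section \ref{ksubset}, here each coordinate block sits inside a single $\sigma$-cycle, so cycle lengths remain governed by the naive least common multiple and the net effect is merely the replacement of falling factorials by ordinary powers; once this is pinned down, the argument is line-for-line that of Section \ref{ktuple}, as the parenthetical preceding the statement suggests.
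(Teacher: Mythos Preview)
Your proposal is correct and takes essentially the same approach as the paper, which dispatches the lemma in a single parenthetical remark (``just replace $\prod i_r^{\underline{k_{rs}}}$ with $\prod i_r^{k_{rs}}$ in Lemma~\ref{tuplebound}''). You have simply fleshed out that remark: the action on $[n]^k$ is a permutation representation, the double-partition bookkeeping of Section~\ref{ktuple} applies verbatim with ordinary powers replacing falling factorials, and Lemma~\ref{tuplebound} already bounds $i_r^{\underline{k_{rs}}}$ by $i_r^{k_{rs}}$, so its estimates go through unchanged. One cosmetic quibble: the assertion that ``all variables are supported on $[-1,1]$'' is not literally true for the full tuple$^*$ statistic before discarding the non-trivial double partitions, but you do not actually need it --- the $o_{\mathbb{P}}(1)$ you have already established together with Slutsky suffices.
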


We now want to show that the same is true for $E_n^{Q_{n-1,k}^\textrm{tuple}}$.  Note that we have the decomposition \begin{equation}\label{decomposition} Q_{n,k}^\textrm{tuple*} = Q_{n,k}^\textrm{tuple} \bigcup Q_{n,k}^\textrm{duplicate} \bigcup Q_{n,k}^\textrm{rest} 
\end{equation} where $Q_{n,k}^\textrm{duplicate}$ contains the tuples with exactly two identical entries and $Q_{n,k}^\textrm{rest}$ contains the rest of the tuples in $Q_{n,k}^\textrm{tuple*}$.  

We have the recursive relation:
\begin{equation}\label{recursive} E_n^{Q_{n,k}^\textrm{tuple}} = E_n^{Q_{n-1,k}^\textrm{tuple}} \bigcup \big(E_n^{Q_{n-1,k-1}^\textrm{tuple}}\big)^{\bullet k} \end{equation} where $\big( E_n^{Q_{n-1,k-1}^\textrm{tuple}} \big)^{\bullet k}$ denotes the multiset containing $k$ copies of $E_n^{Q_{n-1,k-1}^\textrm{tuple}}$.

We also have the following decomposition of $E_n^{Q_{n,k}^\textrm{duplicate}}$: 

\begin{lemma} \label{duplicate}
The multiset $E_n^{Q_{n,k}^\textrm{duplicate}}$ is the union of $n {k \choose 2}$ rotated copies of $E_n^{Q_{n-1,k-2}^\textrm{tuple}}$.
\end{lemma}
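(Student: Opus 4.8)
The plan is to track exactly which eigenangles Stembridge's formula produces from a tuple in $Q_{n,k}^\textrm{duplicate}$ and to observe that the repeated entry contributes nothing essential beyond a rotation. Recall $E_n^U(\sigma) = \{\sum_{i \in S} b_\mu(i) : S \in U\}$ where here $S$ ranges over $k$-tuples with exactly one pair of equal coordinates. First I would set up the bookkeeping: a tuple $S \in Q_{n,k}^\textrm{duplicate}$ is specified by a choice of which two of the $k$ slots carry the repeated value (that is $\binom{k}{2}$ choices), a choice of that repeated value $v \in [n-1]$ (recall the descent sets live in $[n-1]$, so the coordinates are drawn from $[n-1]$, giving $n-1$ rather than $n$ — I would double-check the index conventions of Proposition \ref{stem} here), and finally a choice of the remaining $k-2$ coordinate values forming a genuine $(k-2)$-tuple of distinct integers drawn from the remaining symbols. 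So as a set of tuples, $Q_{n,k}^\textrm{duplicate}$ decomposes into $(n-1)\binom{k}{2}$ pieces, one for each (slot-pair, repeated-value) choice, and each piece is in bijection with $Q_{n-1,k-2}^\textrm{tuple}$.

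Next I would compute the eigenangle attached to such a tuple $S$. Writing $v$ for the doubled value and $S'$ for the underlying $(k-2)$-tuple of distinct entries, we have $\sum_{i \in S} b_\mu(i) = 2 b_\mu(v) + \sum_{i \in S'} b_\mu(i) \pmod 1$. Thus the multiset of eigenangles coming from the piece indexed by $(v, \text{slot-pair})$ is precisely $E_n^{Q_{n-1,k-2}^\textrm{tuple}}(\sigma)$ shifted (rotated on the circle) by the fixed amount $2b_\mu(v)$. Since there are $\binom{k}{2}$ slot-pairs all giving the same shifted multiset for a fixed $v$, and $v$ runs over $n-1$ values (or $n$, depending on how one has chosen to phrase things — the statement as written says $n\binom{k}{2}$, so presumably the convention in force counts an extra trivial symbol, e.g. the fixed point corresponding to the last row of the tableau, and I would reconcile this with Proposition \ref{stem} and the definition of $b_\mu$; in any case the count is $\Theta(n)$ and that is all that matters downstream), we get $E_n^{Q_{n,k}^\textrm{duplicate}}$ as the union over these indices of rotated copies of $E_n^{Q_{n-1,k-2}^\textrm{tuple}}$, which is exactly the claim.

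The main obstacle I anticipate is purely one of conventions rather than mathematics: making sure that the ambient symbol set has the right size (so that the multiplicity is genuinely $n\binom{k}{2}$ and not $(n-1)\binom{k}{2}$ or $(n-2)\binom{k}{2}$), and making sure ``rotated copy'' is interpreted correctly — namely, a rotation of the multiset of eigenangles on $\mathbb{R}/\mathbb{Z}$ by a constant, which does not affect cycle-counting arguments of the type used in Lemma \ref{tuplebound} and its analogues. Once the indexing is pinned down, the identity $\sum_{i\in S} b_\mu(i) = 2b_\mu(v) + \sum_{i \in S'} b_\mu(i)$ does all the work, and no estimation is needed. I would therefore keep the proof to a couple of sentences: state the bijection between the pieces of $Q_{n,k}^\textrm{duplicate}$ and $Q_{n-1,k-2}^\textrm{tuple}$, record the shift $2b_\mu(v)$, and conclude.
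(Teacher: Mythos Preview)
Your decomposition by (slot-pair, repeated value $v$) is exactly the paper's, and the shift $2b_\mu(v)$ is the right first step. But there is a genuine gap in your argument, and it is not a convention issue.

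After the shift by $2b_\mu(v)$, what you actually get is the multiset $E_n^{Q_{n,k-2}^{\textrm{tuple}-v}}$: eigenangles of $(k-2)$-tuples drawn from $[n]\setminus\{v\}$. You want $E_n^{Q_{n-1,k-2}^{\textrm{tuple}}}$: eigenangles of $(k-2)$-tuples drawn from $[n-1]$. You pass between these by citing the bijection $[n]\setminus\{v\}\leftrightarrow[n-1]$, but a bijection of index sets does \emph{not} give equality (or rotation) of eigenangle multisets, because the map $i\mapsto b_\mu(i)$ is what matters, and the multisets $\{b_\mu(i):i\neq v\}$ and $\{b_\mu(i):i\neq n\}$ are in general different. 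This is the substantive step, not a bookkeeping issue: the identity $\sum_{i\in S}b_\mu(i)=2b_\mu(v)+\sum_{i\in S'}b_\mu(i)$ does not by itself finish the job.

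The paper handles precisely this point with the observation \eqref{triv}: since $b_\mu(n)=1$, including or excluding the index $n$ in a tuple does not change the eigenangle sum, and this is what lets one relate $E_n^{Q_{n,k-2}^{\textrm{tuple}-v}}$ to $E_n^{Q_{n-1,k-2}^{\textrm{tuple}}}=E_n^{Q_{n,k-2}^{\textrm{tuple}-n}}$. You should invoke an argument of this kind rather than treating the passage from $[n]\setminus\{v\}$ to $[n-1]$ as formal relabeling. Incidentally, the count is genuinely $n\binom{k}{2}$ (the repeated value $v$ lies in $[n]$, since $Q_{n,k}^{\textrm{duplicate}}\subset[n]^k$); your worry about $n-1$ coming from Proposition~\ref{stem} is misplaced here, as that restriction to $[n-1]$ enters only later when one specializes to the descent-set multiset $E_n^{Q_{n-1,k}^{\textrm{set}}}$.
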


\begin{proof}
Note that if $S \in Q_{n,k}^\textrm{tuple}$ is a $k$-tuple containing any element $j$ such that $b_\mu(j) = 1$ and $S^{-j}$ is the $(k-1)$-tuple gotten from $S$ by omitting $j$, then \begin{equation} \label{triv} \sum_{i \in S} b_\mu(i)=\sum_{i \in S^{-j}} b_\mu(i)
\end{equation}  
Let  $Q_{n,k}^{\textrm{tuple}-j}$ denote the set of $k$-tuples not containing the element $j$.  Then using \eqref{triv}, we see that for $1 \le j \le n$, the multisets $E_n^{Q_{n,k}^{\textrm{tuple}-j}}$ are all rotations of $E_n^{Q_{n-1,k}^{\textrm{tuple}}}$.  Since there are $n {k \choose 2}$ ways to pick two entries of a $k$-tuple in $Q_{n,k}^\textrm{duplicate}$ and assigning the same value $j$, the result then follows.
\end{proof}
Together, \eqref{recursive} and Lemma \ref{duplicate} show that $E_n^{Q_{n,k}^\textrm{tuple*}}$ is the union of $E_n^{Q_{n-1,k}^\textrm{tuple}}$, $k$ copies of $E_n^{Q_{n-1,k-1}^\textrm{tuple}}$, $n {k \choose 2}$ rotated copies of $E_n^{Q_{n-1,k-2}^\textrm{tuple}}$ and a set with cardinality of order $O(n^{k-2})$ that we can ignore.  Then by Lemma \ref{Qtuplestar} and inducting on $k$, we have 
\begin{lemma} \label{Qtuple}
As $n \to \infty$, $\displaystyle{ \frac{\big| E_n^{Q_{n-1,k}^\textrm{tuple}} \cap I \big| - \mathbb{E}\big| E_n^{Q_{n-1,k}^\textrm{tuple}} \cap I \big|}{n^{k-1}}}$ has the same limiting law as $Y_{n,k}$.
\end{lemma}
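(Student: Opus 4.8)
The plan is to prove the lemma by induction on $k$, using the structural decompositions already established to reduce a statement about $k$-tuples of distinct entries to the statement about $k$-tuples with repeats (Lemma \ref{Qtuplestar}), up to error terms of strictly smaller order. The base case $k=1$ is essentially Lemma \ref{Qtuplestar} itself: a $1$-tuple has no repeated entry, so $Q_{n,1}^\textrm{tuple*} = Q_{n,1}^\textrm{tuple}$, while \eqref{recursive} gives $E_n^{Q_{n,1}^\textrm{tuple}} = E_n^{Q_{n-1,1}^\textrm{tuple}} \cup \{0\}$, so $\bigl|E_n^{Q_{n-1,1}^\textrm{tuple}} \cap I\bigr|$ and $\bigl|E_n^{Q_{n,1}^\textrm{tuple*}} \cap I\bigr|$ differ by at most $1$ and have the same limiting behaviour.

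For the inductive step, fix $k \ge 2$ and assume the lemma for $k-1$. Combining \eqref{decomposition} and \eqref{recursive}, and using that $\,\cdot\cap I\,$ and cardinality distribute over multiset unions, yields the exact identity
\begin{equation*}
\bigl|E_n^{Q_{n,k}^\textrm{tuple*}} \cap I\bigr| = \bigl|E_n^{Q_{n-1,k}^\textrm{tuple}} \cap I\bigr| + k\,\bigl|E_n^{Q_{n-1,k-1}^\textrm{tuple}} \cap I\bigr| + \bigl|E_n^{Q_{n,k}^\textrm{duplicate}} \cap I\bigr| + \bigl|E_n^{Q_{n,k}^\textrm{rest}} \cap I\bigr|.
\end{equation*}
After subtracting expectations and dividing by $n^{k-1}$, it suffices to show that the left side converges in law to $Y_{\infty,k}$ (this is Lemma \ref{Qtuplestar}) while each of the last three terms, centered and divided by $n^{k-1}$, tends to $0$ in probability; the claim for $\bigl|E_n^{Q_{n-1,k}^\textrm{tuple}} \cap I\bigr|$ then follows by Slutsky. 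The $Q_{n,k}^\textrm{rest}$ term is trivial: $\bigl|E_n^{Q_{n,k}^\textrm{rest}} \cap I\bigr| \le |Q_{n,k}^\textrm{rest}| = O(n^{k-2})$, because a tuple with a triple coincidence or with two disjoint coincidences carries at most $k-2$ distinct values. The $Q_{n-1,k-1}^\textrm{tuple}$ term is controlled by the inductive hypothesis at level $k-1$, which makes the centered count, divided by $n^{k-2}$, converge in law to $Y_{\infty,k-1}$; hence it is $O_{\mathbb P}(n^{k-2}) = o_{\mathbb P}(n^{k-1})$.

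The term requiring actual work is $\bigl|E_n^{Q_{n,k}^\textrm{duplicate}} \cap I\bigr|$, and I would not attack it through the inductive hypothesis: Lemma \ref{duplicate} presents this multiset as $n\binom{k}{2}$ \emph{rotated} copies of $E_n^{Q_{n-1,k-2}^\textrm{tuple}}$, and the rotations, being by the rationals $2 b_\mu(j)$, may destroy the $\mathbb{Q}$-linear independence of the endpoints $\alpha,\beta$ that underlies the earlier limit theorems. Instead I would bound it directly via the cycle structure of the permutation $\sigma_k^\textrm{duplicate}$ that $\sigma$ induces on $Q_{n,k}^\textrm{duplicate}$. For any permutation matrix with cycles $c$ of lengths $\ell_c$, its number of eigenangles in the fixed arc $I=(\alpha,\beta)$ is $\bigl(\sum_c \ell_c\bigr)(\beta-\alpha) + \sum_c O(1)$, where the total mass $\sum_c \ell_c = |Q_{n,k}^\textrm{duplicate}|$ is \emph{deterministic}; consequently the centered duplicate count is bounded in absolute value by $C\bigl(N(\sigma) + \mathbb{E}[N]\bigr)$, where $C$ is absolute and $N(\sigma)$ denotes the number of cycles of $\sigma_k^\textrm{duplicate}$.

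It thus remains to show $N(\sigma) = o_{\mathbb P}(n^{k-1})$, which is the estimate of Lemma \ref{tuplebound} repeated with $Q_{n,k}^\textrm{duplicate}$ in place of $Q_{n,k}^\textrm{tuple}$: a duplicate tuple carries only $k-1$ distinct entries, so partitioning by which $\sigma$-cycles contain those entries and applying the Watterson factorial-moment formula \eqref{diaconispoisson} bounds $\mathbb{E}[N]$ by a quantity of order $n^{k-2}$ when $k\ge 3$ (and of order $\log n$ when $k=2$) — the dominant configuration being all $k-1$ distinct entries in one $\sigma$-cycle, contributing $\binom{k}{2}\sum_{\ell} \mathbb{E}\bigl[C_\ell^{(n)}\bigr]\ell^{k-2}$. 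This closes the induction. The main obstacle is precisely this last cycle count: one must verify that the configurations spreading the $k-1$ distinct entries over several $\sigma$-cycles, together with the periodicity corrections for the induced cycles, are no larger than the dominant term — but this is entirely parallel to the estimates already carried out in Lemmas \ref{tuplebound} and \ref{subsetbound} and introduces no new ideas.
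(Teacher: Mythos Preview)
Your overall architecture --- induction on $k$ via the decomposition \eqref{decomposition}--\eqref{recursive}, invoking Lemma~\ref{Qtuplestar} for the main term, and showing the three remaining pieces are $o_{\mathbb P}(n^{k-1})$ after centering --- is exactly the paper's approach. Your handling of the $Q_{n,k}^{\textrm{rest}}$ term and of the $(k-1)$-term via the inductive hypothesis is correct and matches the paper.

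The gap is in your treatment of the duplicate term. You propose to bound the centered quantity $\bigl|E_n^{Q_{n,k}^{\textrm{duplicate}}}\cap I\bigr| - \mathbb{E}[\,\cdot\,]$ by a constant times the number $N(\sigma)$ of cycles of the permutation $\sigma_k^{\textrm{duplicate}}$ that $\sigma$ induces on $Q_{n,k}^{\textrm{duplicate}}$, invoking the identity ``eigenangle count $= M(\beta-\alpha)+O(\text{cycles})$'' for permutation matrices. This would be valid if $E_n^{Q_{n,k}^{\textrm{duplicate}}}$ were the eigenangle multiset of that induced permutation --- but it is not. The multiset $E_n^U$ is defined in \eqref{etuple} as the collection of Stembridge sums $\sum_{i\in S} b_\mu(i)$, and for a proper $\sigma$-invariant $U\subsetneq [n]^k$ this need not coincide with the spectrum of $\sigma|_U$. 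A minimal counterexample: for $k=2$ and $\sigma=(1\,2)$ one has $b_\mu=(1/2,1)$, so $E_n^{Q_{2,2}^{\textrm{duplicate}}}=\{2b_\mu(1),\,2b_\mu(2)\}=\{0,0\}$, whereas $\sigma$ acts on $\{(1,1),(2,2)\}$ as a transposition with eigenangle multiset $\{0,1/2\}$. Hence your inequality ``$|\text{centered count}| \le C(N+\mathbb{E}N)$'' does not follow from the permutation-matrix identity you quote. (The coincidence $E_n^{[n]^k}=\{\text{eigenangles of }\sigma^{\otimes k}\}$ that underlies Lemma~\ref{Qtuplestar} is special to the full tensor power.)

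The paper handles this piece instead by Lemma~\ref{duplicate}, writing $E_n^{Q_{n,k}^{\textrm{duplicate}}}$ as $n\binom{k}{2}$ rotated copies of $E_n^{Q_{n-1,k-2}^{\textrm{tuple}}}$ and appealing to the inductive hypothesis at level $k-2$. Your concern that the rational rotations destroy the hypothesis on $\alpha,\beta$ is unfounded: shifting $(\alpha,\beta)$ by a rational preserves $\mathbb{Q}$-linear independence of $1,\alpha,\beta$. A repair along your own lines is possible --- along each $\sigma$-orbit in $Q_{n,k}^{\textrm{duplicate}}$ the Stembridge sums form an arithmetic progression in $\mathbb{T}$, and one can bound the fluctuation by the total number of such progressions --- but this requires an argument genuinely different from ``count cycles of $\sigma_k^{\textrm{duplicate}}$''.
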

Since $E_n^{Q_{n-1,k}^\textrm{tuple}}$ just consists of $k!$ copies of  $E_n^{Q_{n-1,k}^\textrm{set}}$, this proves Theorem \ref{simplifythm} for $Y_{n,k}^\textrm{irrep}$.

\section{Equidistributed sequences} \label{equidistributed sequences}

In this section, we review some of the theory of uniform distribution mod 1 that will be important in the sequel.  This material is all contained in Kuipers and Neiderreiter's book \cite{kuipers}, which contains many other interesting results on equidistribution.   

It will be convenient to identify the interval $[0,1]$ with the 1-dimensional torus (circle) $\mathbb{T}^1$ by identifying the two endpoints.  Since $\mathbb{T}^1$ is a group under addition, this will obviate the need to take fractional parts. 
\begin{definition}\label{equidistributed}
A sequence $(x_n)_{n \in \mathbb{N}}$ of elements of $\mathbb{T}^d$ is said to be equidistributed or uniformly distributed if for every box $\displaystyle{B = \prod_{i=1}^d [a_i, b_i] }$ such that $0 \le a_i < b_i \le 1$, we have  \begin{equation}\lim_{n \to \infty} \frac{A(B; n)}{n} = \prod_{i=1}^d (b_i - a_i) \end{equation} where $A(B; n)$ counts the number of elements of the sequence $(x_1,...,x_n)$ in the box $B$. 
\end{definition}

We have the following important criterion for equidistribution first formulated by Hermann Weyl.

\begin{theorem}[Weyl's Criterion]\label{weylcriterion}
The sequence $(x_j)_{j \in \mathbb{N}}$ of elements in $\mathbb{T}^d$ is equidistributed if and only if for each nonzero element $h \in \mathbb{Z}^d$, $\displaystyle{\lim_{n \to \infty} \frac{1}{n} \sum_{j=1}^n e^{2 \pi i h \cdot x_j} = 0}$.  

\end{theorem}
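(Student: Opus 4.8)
The plan is to prove the two implications separately; in each direction I will pass through the intermediate assertion that $\frac{1}{n}\sum_{j=1}^n f(x_j) \to \int_{\mathbb{T}^d} f$ for $f$ ranging over a suitable class of functions, and use approximation arguments to move between classes. Throughout, note that Definition \ref{equidistributed} is exactly the statement that $\frac{1}{n}\sum_{j=1}^n \mathbf{1}_B(x_j) \to \prod_{i=1}^d(b_i-a_i) = \int_{\mathbb{T}^d}\mathbf{1}_B$ for every box $B$.

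For the forward direction, I would start from equidistribution and extend the convergence of averages from box indicators to step functions (finite linear combinations of box indicators) by linearity. Given a continuous function $f$ on $\mathbb{T}^d$ and $\epsilon>0$, I would sandwich $f$ between step functions $g_-\le f\le g_+$ with $\int_{\mathbb{T}^d}(g_+-g_-)<\epsilon$; then $\limsup_n \frac{1}{n}\sum_{j=1}^n f(x_j)\le \int g_+ \le \int f + \epsilon$ and similarly for the $\liminf$, and letting $\epsilon\to 0$ gives $\frac{1}{n}\sum_{j=1}^n f(x_j)\to \int_{\mathbb{T}^d} f$. Applying this to the real and imaginary parts of $f(x)=e^{2\pi i h\cdot x}$ and using $\int_{\mathbb{T}^d}e^{2\pi i h\cdot x}\,dx = 0$ for $h\ne 0$ yields Weyl's sums.

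For the converse, assume $\frac{1}{n}\sum_{j=1}^n e^{2\pi i h\cdot x_j}\to 0$ for every nonzero $h\in\mathbb{Z}^d$ (the case $h=0$ being trivially $\frac{1}{n}\sum 1 = 1 = \int_{\mathbb{T}^d}1$). By linearity, $\frac{1}{n}\sum_{j=1}^n P(x_j)\to \int_{\mathbb{T}^d}P$ for every trigonometric polynomial $P$. The Stone--Weierstrass theorem shows the trigonometric polynomials are dense in $C(\mathbb{T}^d)$ in the sup norm, so the same $\epsilon$-sandwiching argument (now squeezing a continuous $f$ between $P\pm\epsilon$ for a nearby trig polynomial $P$) gives $\frac{1}{n}\sum_{j=1}^n f(x_j)\to\int_{\mathbb{T}^d}f$ for all $f\in C(\mathbb{T}^d)$. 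Finally, to recover equidistribution I fix a box $B$ and, for $\epsilon>0$, choose continuous functions $g_-\le \mathbf{1}_B\le g_+$ on $\mathbb{T}^d$ (piecewise-linear in each coordinate, respecting the endpoint identification) with $\int_{\mathbb{T}^d}(g_+-g_-)<\epsilon$; then $\limsup_n \frac{1}{n}A(B;n)\le \int g_+\le \prod_i(b_i-a_i)+\epsilon$ and $\liminf_n \frac{1}{n}A(B;n)\ge \prod_i(b_i-a_i)-\epsilon$, and $\epsilon\to 0$ finishes.

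The genuine input is Stone--Weierstrass (equivalently, Fej\'er-type density of trigonometric polynomials on the torus); everything else is the two elementary sandwiching steps. I expect the only real nuisance to be the bookkeeping in constructing the step-function and continuous approximants that are compatible with the torus identification, but this is conceptually immediate and I would keep it brief.
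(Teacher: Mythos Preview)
Your proposal is correct and is essentially the standard proof of Weyl's criterion (as found, e.g., in Kuipers--Niederreiter). Note, however, that the paper does not give its own proof of this theorem: it is stated in Section~\ref{equidistributed sequences} as background material cited from \cite{kuipers}, so there is nothing in the paper to compare your argument against.
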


From this, it is easy to establish \cite[Thm. 1.6.4]{kuipers}:
\begin{theorem}[Weyl's Equidistribution Theorem]\label{weylthm}
If $\gamma$ is irrational, then the sequence $(n \gamma)_{n \in N}$ is equidistributed.  More generally, if $p$ is a polynomial with at least one nonconstant irrational coefficient, then the sequence $(p(n))_{n \in N}$ is equidistributed.     
\end{theorem}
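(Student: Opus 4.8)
The plan is to verify the hypothesis of Weyl's criterion (Theorem \ref{weylcriterion}) in each case. For the linear statement, fix a nonzero integer $h$; we must show $\frac1n\sum_{j=1}^n e^{2\pi i h j\gamma}\to 0$. Since $\gamma$ is irrational, $h\gamma\notin\mathbb Z$, so $e^{2\pi i h\gamma}\neq 1$ and we may sum the geometric series to get
\begin{equation}
\left|\sum_{j=1}^n e^{2\pi i h j\gamma}\right| = \left|e^{2\pi i h\gamma}\,\frac{e^{2\pi i h n\gamma}-1}{e^{2\pi i h\gamma}-1}\right| \le \frac{2}{|e^{2\pi i h\gamma}-1|},
\end{equation}
a constant independent of $n$. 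Dividing by $n$ gives a quantity of order $O(1/n)$, so the limit is $0$ and $(n\gamma)_{n}$ is equidistributed.

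For the polynomial statement I would argue by induction on $d=\deg p$, the base case $d=1$ being exactly the linear case just treated. The key analytic input is \emph{van der Corput's inequality}: for complex numbers $z_1,\dots,z_N$ and any integer $1\le H\le N$,
\begin{equation}
\left|\frac1N\sum_{j=1}^N z_j\right|^2 \le \frac{N+H}{N^2H}\sum_{j=1}^N|z_j|^2 + \frac{2(N+H)}{N^2H}\sum_{r=1}^{H-1}(H-r)\left|\sum_{j=1}^{N-r} z_{j+r}\overline{z_j}\right|.
\end{equation}
Apply this with $z_j=e^{2\pi i h p(j)}$, so $|z_j|=1$ and $z_{j+r}\overline{z_j}=e^{2\pi i h q_r(j)}$ with $q_r(x):=p(x+r)-p(x)$. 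For each fixed $r\ge1$ the polynomial $q_r$ has degree $d-1$ with leading coefficient $d\,r\,\alpha_d$, where $\alpha_d$ is the leading coefficient of $p$; if $\alpha_d$ is irrational this is again irrational, so the inductive hypothesis gives $\frac1{N-r}\sum_j e^{2\pi i h q_r(j)}\to 0$. Letting $N\to\infty$ (so the first term tends to $1/H$ and, for fixed $H$, the second tends to $0$) and then $H\to\infty$ forces $\frac1N\sum_j z_j\to0$, completing the induction whenever the leading coefficient is irrational.

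The remaining point — and the step I expect to need the most care — is reducing the general hypothesis ("some nonconstant coefficient is irrational") to the case of an irrational leading coefficient, since the differencing above only produces an irrational coefficient when the \emph{top} coefficient is irrational. Here I would pass to arithmetic subsequences: let $q$ be a common denominator of all rational coefficients of $p$, and for $s=0,\dots,q-1$ consider $n\mapsto p(qn+s)$. Expanding $(qn+s)^j$ and using that $q^{i}$ times a rational with denominator dividing $q$ is an integer for $i\ge1$, one finds that $p(qn+s)$ is, modulo $1$, a polynomial $\tilde p_s(n)$ of degree $d'$, where $d'\ge1$ is the largest index with $\alpha_{d'}$ irrational, and with leading coefficient $q^{d'}\alpha_{d'}$ (still irrational) — all higher-degree terms collapse to integers. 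Since equidistribution is unaffected by translating the argument and by reduction mod $1$, each subsequence $(p(qn+s))_n$ is equidistributed by the leading-coefficient case, and a sequence splitting into finitely many equidistributed subsequences along residues mod $q$ is itself equidistributed. This yields the general statement.
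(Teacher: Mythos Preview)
The paper does not actually prove this theorem: it simply states it and cites \cite[Thm.~1.6.4]{kuipers}. There is therefore nothing in the paper to compare your argument against line by line.

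That said, your proposal is correct and is essentially the standard textbook proof (the one you would find in Kuipers--Niederreiter). The linear case via geometric series and the inductive step via van der Corput's inequality are exactly the classical route. Your reduction of the general case to the irrational-leading-coefficient case by passing to arithmetic progressions modulo a common denominator $q$ is also standard and correctly executed; the only cosmetic imprecision is that the leading coefficient of $\tilde p_s$ works out to be $q^{d'}\alpha_{d'}$ \emph{plus an integer} (coming from the $j>d'$ terms in the expansion of $(qn+s)^j$), not literally $q^{d'}\alpha_{d'}$ --- but of course this is still irrational, so the argument is unaffected. The observation that the naive differencing fails precisely when the only irrational coefficient is $\alpha_1$ (since then $p(x+r)-p(x)$ has only its constant term irrational) is the right diagnosis of why the extra reduction step is needed.
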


Weyl's criterion only gives a qualitative asymptotic condition for equidistribution.  It will also be useful to have quantitative bounds on the rate of convergence to equidistribution. 

\begin{definition}\label{multidiscrepancy}
Let $J$ be the set of $d$-dimensional boxes of the form $\displaystyle{\prod_{i=1}^d [a_i, b_i] }$ where $0 \le a_i < b_i \le 1$ and let $\mathcal{P}_n$ be a multiset or sequence of $n$ elements of the $d$-dimensional torus $\mathbb{T}^d$. 
The multidimensional discrepancy is given by \[D(\mathcal{P}_n) := \sup_{B \in J} \bigg|\frac{A(B; n)}{n} - \prod_{i=1}^d (b_i - a_i) \bigg| \] where $A(B; n)$ counts the number of elements of $\mathcal{P}_n$ in the box $B$.  
\end{definition} 

\begin{remark}
By \cite[Thm. 2.1.1]{kuipers}, a sequence $(x_n)_{n \in \mathbb{N}}$ is equidistributed if and only if \sloppy $\displaystyle{ \lim_{n \to \infty} D(x_1,...,x_n) = 0}$.

\end{remark}

\begin{definition}

If $\omega$ is an infinite sequence, let $D_{i,n}(\omega)$ be the discrepancy of the $(i+1)^\textrm{st}$ through $(i+n)^\textrm{th}$ terms of the sequence.  If $D_{i,n}(\omega) \to 0$ uniformly in $i$ as $n \to \infty$, we say that the sequence $\omega$ is \textit{well-distributed}. 
\end{definition}

\begin{remark}
Note that for the sequence $x_n = n \alpha +\beta$, the discrepancy $D_{i,n}(x)$ only depends on $n$ since the subsequence $x_{i+1},...,x_{i+n}$ is just a translate of the sequence $x_1,...,x_n$.  Thus, $(x_n)$ is well distributed.  In fact, using the van der Corput lemma, one can show that if $p$ is a polynomial with at least one nonconstant irrational coefficient, then the sequence $(p(n))_{n \in N}$ is well-distributed.
\end{remark}

The following definition is useful to state various estimates for the discrepancy:
\begin{definition}\label{defirrational}
The irrationality measure, $\mu(r)$, of a real number $r$ is given by   \[\mu(r)= \inf \left\{ \lambda\colon \left\lvert r-\frac{p}{q}\right\rvert < \frac{1}{q^{\lambda}} \text{ has only finitely many integer solutions in p and q }\right\} \]
\end{definition}
 
\begin{theorem}[{\cite[Thm. 2.3.2]{kuipers}}] \label{1Ddisc}
Let $\alpha$ have irrationality measure $\lambda$ and let $x_n = n \alpha + \beta$.  Then for every $\varepsilon > 0$, \begin{equation} D_{i,n}(x) = O(n^{-\frac{1}{\lambda-1} + \varepsilon}) \end{equation}     
\end{theorem}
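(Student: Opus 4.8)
The plan is to reduce to the discrepancy of the one-sided Kronecker sequence $(\{j\alpha\})_{j\ge1}$ and then exploit the continued fraction expansion of $\alpha$. By the remark preceding the statement, the block $x_{i+1},\dots,x_{i+n}$ is the translate by the constant $i\alpha$ (mod $1$) of $x_1,\dots,x_n$; discrepancy on the torus $\mathbb{T}^1$ is invariant under translation (the endpoint $0$ of the intervals in Definition \ref{multidiscrepancy} contributing only an $O(1/n)$ error, which is harmless since $\lambda\ge2$), and the additive constant $\beta$ likewise drops out. Hence $D_{i,n}(x)=D_n+O(1/n)$ for every $i$, where $D_n:=D(\{\alpha\},\{2\alpha\},\dots,\{n\alpha\})$, so it suffices to prove $D_n=O(n^{-1/(\lambda-1)+\varepsilon})$, with uniformity in $i$ then automatic.

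Let $p_k/q_k$ be the convergents of $\alpha$ and $a_k$ its partial quotients, and let $\ell$ be defined by $q_\ell\le n<q_{\ell+1}$. I would invoke the classical bound for the discrepancy of the Kronecker sequence (going back to Ostrowski; see \cite[\S2.3]{kuipers}): expanding $n$ in the Ostrowski numeration attached to $\alpha$, $n=\sum_{k=0}^{\ell}b_kq_k$ with digits $0\le b_k\le a_{k+1}$, one has $nD_n\ll\sum_{k=0}^{\ell}b_k$. The point is that the digits admit two a priori bounds: $b_k\le a_{k+1}$ from the numeration constraints, and $b_k\le n/q_k$ since $b_kq_k\le n$. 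I would also record the standard two-sided estimate $(q_{k+1}+q_k)^{-1}<\|q_k\alpha\|<q_{k+1}^{-1}$, which together with $q_{k+1}=a_{k+1}q_k+q_{k-1}$ yields $\|q_k\alpha\|\asymp(a_{k+1}q_k)^{-1}$; here $\|\cdot\|$ is the distance to the nearest integer.

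Now I would bring in the irrationality measure. By Definition \ref{defirrational}, for every $\varepsilon>0$ there is $c_\varepsilon>0$ with $\|q_k\alpha\|\ge c_\varepsilon q_k^{-(\lambda-1+\varepsilon)}$ for all $k$ (the finitely many exceptional $k$ absorbed into the constant), so $a_{k+1}\ll_\varepsilon q_k^{\lambda-2+\varepsilon}$, an exponent that is nonnegative because $\lambda\ge2$ by Dirichlet's theorem. Substituting both digit bounds into the Ostrowski estimate,
\[
nD_n\ \ll\ \sum_{k=0}^{\ell}\min\!\big(q_k^{\lambda-2+\varepsilon},\ n/q_k\big)\ \ll\ (\ell+1)\max_{1\le x\le n}\min\!\big(x^{\lambda-2+\varepsilon},\ n/x\big).
\]
The inner maximum is attained where the increasing branch $x^{\lambda-2+\varepsilon}$ meets the decreasing branch $n/x$, i.e.\ at $x\asymp n^{1/(\lambda-1+\varepsilon)}$, with value $\asymp n^{(\lambda-2+\varepsilon)/(\lambda-1+\varepsilon)}$; and $q_{k+1}\ge q_k+q_{k-1}$ forces $q_k$ to grow at least at the Fibonacci rate, so $\ell\ll\log n$. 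Thus $nD_n\ll(\log n)\,n^{(\lambda-2+\varepsilon)/(\lambda-1+\varepsilon)}$, equivalently $D_n\ll(\log n)\,n^{-1/(\lambda-1+\varepsilon)}$; choosing $\varepsilon$ small and absorbing the logarithmic factor (which is $n^{o(1)}$) gives $D_n=O(n^{-1/(\lambda-1)+\varepsilon})$ for any prescribed $\varepsilon>0$, and with it the theorem.

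The step I expect to be the main obstacle is the Ostrowski-type bound $nD_n\ll\sum_kb_k$ used in the second paragraph: this is the one genuinely nontrivial input --- it is where the three-distance structure of the points $\{j\alpha\}$ enters --- and I would either quote it from \cite{kuipers} or prove it separately by estimating the discrepancy function $\sum_{j\le n}\mathbf 1_{[0,t)}(\{j\alpha\})-nt$ block by block along the convergents. (A route through the Erd\H{o}s--Tur\'{a}n inequality applied to the exponential sums $\sum_{j\le n}e^{2\pi ihj\alpha}$ is also possible, but it is no simpler, since sharpening $\sum_{h\le m}(h\|h\alpha\|)^{-1}$ to the exponent required still needs the continued fraction expansion.) The remaining steps are elementary bookkeeping.
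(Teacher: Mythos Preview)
The paper does not prove this theorem; it is quoted without proof from \cite[Thm.~2.3.2]{kuipers} as a known result from the literature on uniform distribution. Your sketch is correct and is in fact the standard continued-fraction argument that underlies the proof in that reference: reduce to the untranslated Kronecker sequence, bound $nD_n$ by the sum of Ostrowski digits, control the partial quotients via the irrationality-measure hypothesis, and balance the two digit bounds. Two cosmetic remarks: the translation step does not give $D_{i,n}(x)=D_n+O(1/n)$ as written, but rather $D_{i,n}(x)\le 2D_n$ (translation preserves the discrepancy over \emph{arcs} on $\mathbb{T}^1$ exactly, and the arc-discrepancy and interval-discrepancy differ by at most a factor of~2), which is all you need for an $O$-bound; and your parenthetical about the Erd\H{o}s--Tur\'an route is apt, since that is the other standard way to reach the same exponent.
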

 
\section{Moment method for $k=1$ case} \label{k=1}
Before applying the moment method to find the limiting distribution of $Y_{n,k}$ for $k > 1$, let us first consider the $k=1$ case of permutation matrices.  Then the appropriate scaled count of eigenangles in the interval $I=(\alpha, \beta)$ is 
\begin{equation} Y_{n,1}^\textrm{tuple} := \frac{X_{n,1}^\textrm{tuple} - \mathbb{E}[X_{n,1}^\textrm{tuple}]}{\sqrt{\log n}} = \frac{1}{\sqrt{\log n}} \sum_{j=1}^n \bigg(C_j^{(n)} - \frac{1}{j} \bigg) (\{j\alpha \} - \{j\beta \})
\end{equation}

Wieand \cite{wieand} (for $\theta=1$) and Ben Arous and Dang \cite{arousdang} (for general $\theta > 0$) use the Feller coupling \cite{feller} along with the CLT to show limiting normality of $Y_{n,1}^\textrm{tuple}$.  The Feller coupling is a way of constructing random permutations using sums of Bernoulli random variables that allows for quantitative bounds on the distance between $C_j^{(n)}$ and independent Poisson variables $W_j$ with parameter $1/j$.  Using this coupling, it turns out that the asymptotic behavior of $Y_{n,1}^\textrm{tuple}$ is unchanged if one replaces the dependent variables $C_j^{(n)}$ with the independent variables $W_j$.  See \cite{wieand} for details.  In this section, we will apply the method of moments to rederive this result.    

To make computing the moments simpler, we assume  $\alpha$ and $\beta$ are irrationals linearly independent over $\mathbb{Q}$ of finite irrationality measure.  By Khintchine's theorem, the set of numbers with irrationality measure greater than 2 has Lebesgue measure 0, so this is not a very restrictive condition.  It follows from Theorem \ref{1Ddisc} and \cite[Thm. 3]{wieand} that \begin{equation} \label{finiteness} \bigg|\sum_{j=1}^n \frac{1}{j} (\{j\alpha \} - \{j\beta \})\bigg| < C 
\end{equation} for some absolute constant $C$.  With this additional finiteness restriction on $\alpha$ and $\beta$, it suffices then to show that  \begin{equation} Z_n := \frac{1}{\sqrt{\log n}} \sum_{j=1}^n C_j^{(n)} (\{j\alpha \} - \{j\beta \}) \end{equation} limits to a normal distribution.  We wish to establish the following proposition, which states the convergence of the moments of $Z_n$ to those of a centered normal distribution with variance $\theta/6$.

\begin{proposition} \label{momentsk=1}
The odd moments of $Z_n$ limit to 0.  For even $m$,
\begin{equation}\lim_{n \to \infty} \mathbb{E}[(Z_n)^m] = \frac{m!}{2^{m/2}} \frac{1}{(m/2)!} \left(\frac{\theta}{6}\right)^{m/2}
\end{equation} 
\end{proposition}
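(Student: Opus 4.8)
The plan is to prove the stated moment limits directly; since the proposition is phrased in terms of moments, nothing further is needed for weak convergence. Write $a_j := \{j\alpha\}-\{j\beta\}$, so that $Z_n = (\log n)^{-1/2}\sum_{j=1}^n C_j^{(n)} a_j$ with $|a_j|\le 1$. I would proceed in two steps: (i) compute the moments of the Poissonized statistic $S_n := \sum_{j=1}^n W_j a_j$, where the $W_j$ are independent with $W_j\sim\mathrm{Poisson}(\theta/j)$; and (ii) show, via Watterson's formula \eqref{diaconispoisson}, that replacing the $C_j^{(n)}$ by the $W_j$ alters $\mathbb{E}\big[\big(\sum_j C_j^{(n)} a_j\big)^m\big]$ by only $o\big((\log n)^{m/2}\big)$.

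For step (i): since $S_n$ is a sum of independent terms $a_j W_j$, its cumulants are $\kappa_r(S_n) = \sum_{j=1}^n (\theta/j)a_j^r = \theta A_r(n)$ with $A_r(n):=\sum_{j=1}^n a_j^r/j$. Here $A_1(n)=O(1)$ by \eqref{finiteness}; $A_r(n)=O(\log n)$ trivially for all $r\ge 1$; $A_r(n)=o(\log n)$ for odd $r$, since the Ces\`aro mean of $a_j^r$ equals $\int_0^1\!\!\int_0^1(x-y)^r\,dx\,dy=0$ by the equidistribution of $(\{j\alpha\},\{j\beta\})$ in $[0,1]^2$ (a consequence of Weyl's criterion, Theorem \ref{weylcriterion}, and the irrationality hypotheses) followed by partial summation; and, with $r=2$, $A_2(n)\sim\frac16\log n$ because $\int_0^1\!\!\int_0^1(x-y)^2\,dx\,dy=\frac16$. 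Substituting into the moment--cumulant formula $\mathbb{E}[S_n^m]=\sum_{\pi}\prod_{B\in\pi}\kappa_{|B|}(S_n)$ (sum over set partitions $\pi$ of $\{1,\dots,m\}$), for even $m$ the perfect matchings contribute $(m-1)!!\,(\theta A_2(n))^{m/2}\sim(m-1)!!\,(\theta/6)^{m/2}(\log n)^{m/2}$, while every other partition carries a strictly smaller power of $\log n$ — a singleton block contributes only $O(1)$ where a productive block contributes $\Theta(\log n)$, and introducing a block of size $\ge 3$ costs at least a factor $(\log n)^{1/2}$ in the number of blocks; for odd $m$ there is no perfect matching, so all partitions give $O\big((\log n)^{(m-1)/2}\big)$. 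Hence $\mathbb{E}[S_n^m]=(m-1)!!\,(\theta/6)^{m/2}(\log n)^{m/2}+O\big((\log n)^{(m-1)/2}\big)$ for even $m$ and $\mathbb{E}[S_n^m]=O\big((\log n)^{(m-1)/2}\big)$ for odd $m$; dividing by $(\log n)^{m/2}$ and using $(m-1)!!=m!/(2^{m/2}(m/2)!)$ yields the claimed limits for the Poissonized moments.

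For step (ii): I would expand $\big(\sum_j C_j^{(n)} a_j\big)^m$ by the multinomial theorem, group the monomials by the set partition $\sigma$ of $\{1,\dots,m\}$ recording which slots carry equal cycle lengths and by the injective assignment $\phi$ of distinct lengths to the blocks, and expand each $(C_{\phi(B)}^{(n)})^{|B|}=\sum_b\stirling{|B|}{b}(C_{\phi(B)}^{(n)})^{\underline b}$ into falling factorials. By \eqref{diaconispoisson} each factorial moment becomes the Poisson value $\prod_B(\theta/\phi(B))^{b_B}$ times the factor $\mathds{1}(l\le n)\prod_{i=0}^{l-1}\frac{n-i}{\theta+n-i-1}$ with $l=\sum_B b_B\phi(B)$; resumming with this factor replaced by $1$ reproduces exactly $\mathbb{E}[S_n^m]$ (via the Touchard identity $\mathbb{E}[W_v^e]=\sum_b\stirling eb(\theta/v)^b$), so the task is to bound the remaining error. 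I would split according to whether $l\le n/2$ or $l>n/2$. On $\{l\le n/2\}$ the correction factor differs from $1$ by $O(l/n)$, and the extra weight $l=\sum_B b_B\phi(B)$ distributed over the blocks lowers the power of $\log n$ by at least one (placing an extra $\phi(B)$ on a block replaces its sum, of size $O(\log n)$ for a pair or $O(1)$ for a singleton, by $\sum_v|a_v|^{|B|}(\theta/v)^{b_B}v=O(n)$, which the $1/n$ exactly absorbs), giving $O\big((\log n)^{(m-2)/2}\big)$. On $\{l>n/2\}$ some block has $\phi(B)>n/(2m)$ and so contributes $(\theta/\phi(B))^{b_B}=O(1/n)$, while the correction factor is crudely at most $1+\prod_{i=0}^{n-1}\frac{n-i}{\theta+n-i-1}=O\big(1+n^{\max(0,1-\theta)}\big)$ and the remaining blocks contribute at most $O\big((\log n)^{m/2}\big)$ (singletons $O(1)$ by \eqref{finiteness}, pairs $O(\log n)$), giving $O\big(n^{-\min(\theta,1)}(\log n)^{m/2}\big)=o(1)$. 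Together these bounds give $\mathbb{E}\big[\big(\sum_j C_j^{(n)}a_j\big)^m\big]=\mathbb{E}[S_n^m]+o\big((\log n)^{m/2}\big)$, and combining with step (i) completes the proof.

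I expect step (ii) to be the main obstacle: the crude bound $|a_j|\le 1$ is too lossy, since it discards the cancellation in $A_1(n)$ that suppresses singleton blocks, so in estimating the error one must keep signs on all but one distinguished block and rely on the weight $l$ concentrating on the scale $n/\log n\ll n$. The regime $\theta<1$, where the Watterson correction factor can grow like a positive power of $n$, also needs the separate crude estimate indicated above. Everything else is the routine cumulant bookkeeping of step (i).
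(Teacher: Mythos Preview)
Your step (i) is correct and cleaner than the paper's treatment of the Poissonized moments: using the moment--cumulant formula for the independent sum $S_n$ is more transparent than the paper's term-by-term analysis via Lemmas~\ref{Znlimitlemma}--\ref{Znstirlinglemma}.

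Step (ii), however, has a genuine gap in the region $l>n/2$ when $0<\theta<1$. Your bound multiplies the uniform estimate $\Pi:=\prod_{i=0}^{l-1}\tfrac{n-i}{\theta+n-i-1}=O(n^{1-\theta})$ by the \emph{pointwise} value $(\theta/\phi(B^*))^{b_{B^*}}=O(1/n)$ for the distinguished block, and by the summed contribution of the remaining blocks. But you still have to sum over $\phi(B^*)$ itself: there are $\Theta(n)$ admissible values, and in the worst case $b_{B^*}=1$ this sum is only $\sum_{v>cn}\theta/v=O(1)$, not $O(1/n)$. Combined with $\Pi=O(n^{1-\theta})$ the resulting bound is $O\bigl(n^{1-\theta}(\log n)^{(m-1)/2}\bigr)$, which diverges for $\theta<1$. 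The fix is the refined estimate $\Pi\ll(n/(n-l))^{1-\theta}$ of Lemma~\ref{estimates}(2), whose singularity at $l=n$ is integrable; summing it against the harmonic weights as in Lemma~\ref{deltabound} gives the required $o((\log n)^{m/2})$.

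A second, subtler issue concerns your proposal to ``keep signs on all but one distinguished block'' in order to invoke \eqref{finiteness} for singleton blocks. In the $l\le n/2$ region the error factor $\Pi(l)-1$ depends on \emph{all} block indices through $l=\sum_B b_B\phi(B)$, so the sums over the different $\phi(B)$ do not factor and one cannot directly isolate a signed single-block sum $\sum_v a_v\theta/v$. Taking absolute values instead makes each singleton contribute $O(\log n)$ rather than $O(1)$, so the all-singletons partition yields an error of order $(\log n)^{m-1}$, far too large. The paper sidesteps this entirely: rather than bounding the difference $\mathbb{E}\bigl[(\sum C_j a_j)^m\bigr]-\mathbb{E}[S_n^m]$, it shows (Lemmas~\ref{Znlimitlemma}--\ref{Znstirlinglemma}) that for each partition $(m_l)$ and each choice of falling-factorial exponents $(r_l)$, restricting to $j_l\le\delta n$ already gives the full normalized limit (since $\Pi\to 1$ there and the sums then factor, so \eqref{finiteness} applies directly), while the complement is negligible by Lemma~\ref{deltabound}. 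This avoids having to track signs through a coupled error term.
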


For a partition $m = m_1+...+m_s$ such that $m_1 \ge... \ge m_s$, let $c_i$ be the number of parts $m_j$ equal to $i$, so that $\displaystyle{\sum_{i=1}^m i c_i = m}$.  By the multinomial theorem, 

\begin{multline} \label{multinomial}
(Z_n)^m =\frac{1}{(\log n)^{m/2}} \sum_{(m_i) \vdash m} \binom{m}{ m_1,...,m_s} \frac{1}{\prod_{l = 1}^m c_l!} \\ \sum_{j_1 \neq ... \neq j_s}  \prod_{l=1}^s \Big(C_{j_l}^{(n)}\Big)^{m_l} (\{j_l\alpha \} - \{j_l\beta \})^{m_l} 
\end{multline}

\begin{remark}
The coefficients $\displaystyle{ \binom{m}{ m_1,...,m_s} \frac{1}{\prod_{l = 1}^m c_l!} }$ are the so-called Fa\`a di Bruno coefficients which arise in the Fa\`a di Bruno formula \cite{faa} for derivatives as well as the expansion of Bell polynomials.   
\end{remark}
First, we collect a few estimates that we will need:

\begin{lemma} \label{estimates}
\begin{enumerate}
\item[]
\item For large $n$ and $s \ge 1$, \begin{equation}
\sum_{\substack{j_1+...+j_s = n \\ j_l \ge 1}} \frac{1}{j_1...j_s} = O\left(\frac{1}{n} (\log n)^{s-1} \right)
\end{equation}

\item For $\theta > 0$ and integer $1 \le i \le n$, \begin{equation}\label{thetabound} \prod_{j=0}^{i-1} \frac{n-j}{\theta+n-j-1} \le A(\theta) \left( \frac{\theta+n-1}{\theta+n-i} \right)^{1-\theta}
\end{equation} for some constant $A(\theta)$.  Moreover, if $i$ is such that  $n-i = \Omega(n)$, then \begin{equation}\label{thetasim} \prod_{j=0}^{i-1} \frac{n-j}{\theta+n-j-1} \sim \left( \frac{\theta+n-1}{\theta+n-i} \right)^{1-\theta}
\end{equation} as $n \to \infty$.
\end{enumerate}
\end{lemma}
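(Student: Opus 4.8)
The plan is to prove the two items separately, each by elementary estimates. For item (1), I would proceed by induction on $s$. The base case $s=1$ is the trivial identity $1/n = O((1/n)(\log n)^0)$. For the inductive step, split off the last summation index: write
\[
\sum_{\substack{j_1+\dots+j_s=n\\ j_l\ge 1}}\frac{1}{j_1\cdots j_s}
=\sum_{j_s=1}^{n-s+1}\frac{1}{j_s}\sum_{\substack{j_1+\dots+j_{s-1}=n-j_s\\ j_l\ge 1}}\frac{1}{j_1\cdots j_{s-1}}.
\]
By the inductive hypothesis the inner sum is $O\bigl(\frac{1}{n-j_s}(\log(n-j_s))^{s-2}\bigr)\le O\bigl(\frac{(\log n)^{s-2}}{n-j_s}\bigr)$, so the whole expression is bounded by a constant times $(\log n)^{s-2}\sum_{j_s=1}^{n-s+1}\frac{1}{j_s(n-j_s)}$. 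Writing $\frac{1}{j(n-j)}=\frac1n\bigl(\frac1j+\frac1{n-j}\bigr)$ shows $\sum_{j=1}^{n-1}\frac{1}{j(n-j)}=\frac{2}{n}\sum_{j=1}^{n-1}\frac1j=O\bigl(\frac{\log n}{n}\bigr)$, which gives the claimed $O\bigl(\frac1n(\log n)^{s-1}\bigr)$ and closes the induction.

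For item (2), I would take logarithms and compare the product with an integral. Write $P_i:=\prod_{j=0}^{i-1}\frac{n-j}{\theta+n-j-1}$ and note $\log P_i=\sum_{j=0}^{i-1}\bigl[\log(n-j)-\log(\theta+n-j-1)\bigr]$. Each summand equals $\log\frac{n-j}{n-j-1}-\bigl[\log(\theta+n-j-1)-\log(n-j-1)\bigr]$; summing the first piece telescopes, and summing the second piece is comparable (by monotonicity of $t\mapsto\log(\theta+t)-\log t$) to $\int$-type quantities that produce the factor $(1-\theta)\log\frac{\theta+n-1}{\theta+n-i}$ up to a bounded error. Concretely, I expect to show
\[
\log P_i = (1-\theta)\log\frac{\theta+n-1}{\theta+n-i} + O(1)
\]
uniformly in $1\le i\le n$, which immediately yields the upper bound \eqref{thetabound} with a suitable constant $A(\theta)$. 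For the asymptotic refinement \eqref{thetasim}, the hypothesis $n-i=\Omega(n)$ keeps all the arguments $\theta+n-j-1$ bounded away from $0$ proportionally to $n$, so the error terms in the Euler–Maclaurin / Taylor comparison are $O(1/n)\to 0$ rather than merely $O(1)$, and the ratio $P_i\big/\bigl(\frac{\theta+n-1}{\theta+n-i}\bigr)^{1-\theta}\to 1$.

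The main obstacle I anticipate is making the error analysis in item (2) genuinely \emph{uniform} in $i$ across the full range $1\le i\le n$ — when $i$ is close to $n$ the denominators $\theta+n-j-1$ approach $\theta$ and both sides of \eqref{thetabound} blow up, so one must check that the comparison constant does not degrade there; this is handled by noting $\log\frac{\theta+t}{t}$ is summable against $1$ near small $t$ only in the crude $O(1)$ sense, which is exactly why \eqref{thetabound} has an $O(1)$ slack (the constant $A(\theta)$) while \eqref{thetasim} requires the extra hypothesis. Item (1) is routine once the induction is set up; the combinatorial sum manipulations there present no real difficulty.
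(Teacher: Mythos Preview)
Your plan is correct and matches the paper's own proof: part (1) is exactly the ``simple induction argument'' the paper invokes, and part (2) is likewise handled by taking logarithms and using the first-order approximation of $\log$. The paper's route for (2) is a touch more direct than your two-piece decomposition: writing each factor as $\frac{n-j}{\theta+n-j-1}=1+\frac{1-\theta}{\theta+n-j-1}$ and applying $\log(1+x)\le x$ termwise gives $\log P_i\le(1-\theta)\sum_{j=0}^{i-1}\frac{1}{\theta+n-j-1}$, which compares to $(1-\theta)\log\frac{\theta+n-1}{\theta+n-i}$ by a one-line sum--integral estimate; this avoids ever introducing $\log(n-j-1)$ and hence sidesteps the boundary issue at $j=n-1$ that you flagged as the main obstacle.
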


\begin{proof}
Inequality (1) follows from a simple induction argument.  (2) follows from the fact that $\log(1+x) \le x$ for $x > 0$ and $\log(1+x) \approx x$ for small $x$.

\end{proof}

Using Lemma \ref{estimates}, we can compute the following limit:
\begin{lemma} \label{deltabound}
Let $\theta > 0$ and let $\delta > 0$ be small.  Then for $s \ge 1$
\begin{equation}
\frac{1}{(\log n)^{s}} \sum_{\substack{j_1+...+j_s \le n \\ j_1 \ge \lfloor \delta n \rfloor} } \prod_{i=0}^{\sum j_l -1} \frac{n-i}{\theta+n-i-1} \prod_{l=1}^s \left( \frac{\theta}{j_l} \right) = 0
\end{equation}
where all indices $1 \le j_1,...,j_s \le n$.
\end{lemma}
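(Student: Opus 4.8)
The plan is to bound the sum termwise and show the restriction $j_1 \geq \lfloor \delta n \rfloor$ forces the ratio-product factor to be small enough to kill the $(\log n)^{-s}$ normalization after summing. First I would apply the estimate \eqref{thetabound} from Lemma \ref{estimates}(2) with $i = \sum_l j_l$: since $j_1 \geq \lfloor \delta n \rfloor$, we have $n - i = n - \sum_l j_l \leq n - j_1 \leq (1-\delta) n$, hence
\begin{equation}
\prod_{i=0}^{\sum j_l - 1} \frac{n-i}{\theta + n - i - 1} \leq A(\theta) \left( \frac{\theta + n - 1}{\theta + n - i} \right)^{1-\theta} \leq A(\theta) \left( \frac{\theta + n - 1}{\theta + n - i} \right)^{|1-\theta|}.
\end{equation}
When $\theta \geq 1$ the right side is bounded by the constant $A(\theta)$ outright; when $0 < \theta < 1$ it is $O\big( (n - \sum_l j_l + 1)^{-(1-\theta)} n^{1-\theta} \big)$, which is still $O(n^{1-\theta}) = O(n)$ uniformly. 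In either case there is a constant $B(\theta)$ with the product at most $B(\theta) \cdot n^{\max(0, 1-\theta)}$; more usefully, I will keep the sharper bound $B(\theta) \big(n / (n - \sum_l j_l + 1)\big)^{\max(0,1-\theta)}$ to retain the decay near the boundary of the simplex.

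Next I would sum the remaining factor $\prod_{l=1}^s (\theta / j_l)$ over the constrained index set. Writing $j_1 = n - (j_2 + \cdots + j_s) - r$ is awkward; instead I bound crudely by first summing over $j_2, \dots, j_s$ freely in $[1,n]$ (each contributing $\sum_{j} 1/j = O(\log n)$, giving $O((\log n)^{s-1})$) and over $j_1 \in [\lfloor \delta n \rfloor, n]$ contributing $\sum_{j_1 \geq \lfloor \delta n \rfloor} 1/j_1 = \log(n / \lfloor \delta n \rfloor) + O(1/n) = O(1)$. Thus the whole sum is $\theta^s B(\theta) n^{\max(0,1-\theta)} \cdot O((\log n)^{s-1})$ when $\theta \leq 1$ — but the $n^{1-\theta}$ factor is problematic for $\theta < 1$ unless I use the boundary decay. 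So for $\theta < 1$ I instead split the inner sum over $j_1$ according to whether $n - \sum_l j_l > n/2$ or not: in the first region the ratio-product is $O(1)$ and $\sum_{j_1 \geq \lfloor \delta n\rfloor} 1/j_1 = O(1)$ as before; in the second region $j_1 \geq n/2$ so $1/j_1 = O(1/n)$, and $\sum_{r \geq 0} (n/(r+1))^{1-\theta}$ over $r = n - \sum j_l < n/2$ is $O(n^{1-\theta} \cdot n^{\theta}) = O(n)$, so this region contributes $O(1/n) \cdot O(n) \cdot O((\log n)^{s-1}) = O((\log n)^{s-1})$. Either way the total is $O((\log n)^{s-1})$.

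Dividing by $(\log n)^s$ then gives $O(1/\log n) \to 0$ as $n \to \infty$, which is the claim. The main obstacle I anticipate is the regime $0 < \theta < 1$, where the ratio-product factor in Lemma \ref{estimates}(2) can grow like $n^{1-\theta}$ near the corner of the simplex $\sum_l j_l \approx n$; the resolution is exactly the boundary split described above, trading the $n^{1-\theta}$ blowup against the $O(1/n)$ smallness of $1/j_1$ when $j_1$ is forced to be of order $n$. For $\theta \geq 1$ the argument is completely routine: the ratio-product is uniformly bounded and one just reads off the $O((\log n)^{s-1})$ from summing $s$ harmonic sums, one of which (the $j_1$ sum over $[\lfloor \delta n \rfloor, n]$) is only $O(1)$.
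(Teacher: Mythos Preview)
Your argument is correct, modulo one harmless slip: in the second region you write ``$j_1 \geq n/2$,'' which does not follow from $\sum_l j_l \geq n/2$; what you actually need and use is $1/j_1 = O_\delta(1/n)$, and that comes straight from the standing constraint $j_1 \geq \lfloor \delta n \rfloor$. With that correction the Region~2 estimate goes through exactly as you wrote: bound $1/j_1 \le C_\delta/n$, sum the factor $(n/(r+1))^{1-\theta}$ over $r = n - \sum_l j_l \in [0,n/2)$ to get $O(n)$ for each fixed $(j_2,\dots,j_s)$, and then sum those indices freely for $O((\log n)^{s-1})$.

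The paper takes a different route. Instead of splitting on $\theta$ and on the distance to the boundary of the simplex, it introduces the variable $u = \sum_l j_l$, applies the bound \eqref{thetabound} with $i=u$, and then controls the inner sum $\sum_{j_2+\cdots+j_s = u - j_1} 1/(j_2\cdots j_s)$ via the convolution estimate of Lemma~\ref{estimates}(1), obtaining $O\!\big(\tfrac{1}{u-j_1}(\log(u-j_1))^{s-2}\big)$. After summing in $j_1$ (where $1/j_1 = O(1/n)$) and then in $u$, one lands on $\sum_u n^{-\theta}(n-u)^{-(1-\theta)}(\log u)^{s-1} \ll (\log n)^{s-1}$ uniformly in $\theta>0$. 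Your argument is more elementary in that it never needs Lemma~\ref{estimates}(1), trading that for the case split on $\theta$ and the dyadic boundary split; the paper's argument is more uniform in $\theta$ but leans on the convolution lemma. Both yield the same $O((\log n)^{s-1})$ bound.
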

\begin{proof}

Recall Vinogradov's Big-Oh notation $\ll$ to denote inequality up to an absolute constant $C$ as $n \to \infty$.  Then

\begin{align}
&\nonumber\sum_{\substack{j_1+...+j_s \le n \\ j_1 \ge \lfloor \delta n \rfloor} } \prod_{i=0}^{\sum j_l -1} \frac{n-i}{\theta+n-i-1} \prod_{l=1}^s \left( \frac{\theta}{j_l} \right) \\
\ll &\nonumber\sum_{u=\lfloor \delta n \rfloor}^n \sum_{j_1=\lfloor \delta n \rfloor}^{u-1} \frac{1}{j_1}  \left( \frac{\theta+n-1}{\theta+n-u} \right)^{1-\theta} \sum_{j_2+...+j_s = u-j_1} \frac{1}{j_2...j_s} \\
\ll &\nonumber\sum_{u=\lfloor \delta n \rfloor}^n \sum_{j_1=\lfloor \delta n \rfloor}^{u-1} \frac{1}{j_1}  \left( \frac{\theta+n-1}{\theta+n-u} \right)^{1-\theta} \frac{1}{u-j_1} \log(u-j_1)^{s-2} \\
\ll &\nonumber\sum_{u=\lfloor \delta n \rfloor}^n \frac{1}{n^\theta} \frac{1}{(n-u)^{1-\theta}} (\log u)^{s-1} \\
\ll & (\log n)^{s-1}
\end{align}

\end{proof}

We are now ready to prove the following estimate involving the expectation $\displaystyle{\mathbb{E}\Big[\prod_{l=1}^s \Big(C_{j_l}^{(n)}\Big)^{\underline{r_l}}  \Big]}$. 
\begin{lemma} \label{Znlimitlemma}
Let $m = m_1+...+m_s$ be a partition of $m$ and $r_1,...,r_s$ be integers such that $1 \le r_l \le m_l$.  Then 
\begin{multline} \label{Znlimit}
\lim_{n \to \infty} \frac{1}{(\log n)^{m/2}} \sum_{j_1 \neq ... \neq j_s} \mathbb{E}\Big[\prod_{l=1}^s \Big(C_{j_l}^{(n)}\Big)^{\underline{r_l}} \Big] (\{j_l\alpha \} - \{j_l\beta \})^{m_l} \\ = \lim_{n \to \infty} \frac{1}{(\log n)^{m/2}} \sum_{j_1,...,j_s} \prod_{l=1}^s \left( \frac{\theta}{j_l} \right)^{r_l} (\{j_l\alpha \} - \{j_l\beta \})^{m_l}
\end{multline}
Moreover, the limit is 0 unless $r_l = 1$ and $m_l=2$ for all $l$.
\end{lemma}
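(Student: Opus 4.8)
The plan is to use Watterson's formula \eqref{diaconispoisson} to replace the joint factorial moment of the (dependent) cycle counts by the corresponding product of Poisson factorial moments, and then to argue that every error term produced is $o((\log n)^{m/2})$.

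First I would apply \eqref{diaconispoisson} with $b_j = r_l$ when $j = j_l$ (valid since $j_1,\dots,j_s$ are distinct) and $b_j = 0$ otherwise; writing $u := \sum_{l=1}^s j_l r_l$ and using $\mathbb{E}[W^{\underline{n}}] = \lambda^n$ for a Poisson($\lambda$) variable, this gives
\[
\mathbb{E}\Big[\prod_{l=1}^s \big(C_{j_l}^{(n)}\big)^{\underline{r_l}}\Big] = \mathds{1}(u\le n)\,\varrho_n(u)\prod_{l=1}^s (\theta/j_l)^{r_l},\qquad \varrho_n(u) := \prod_{i=0}^{u-1}\frac{n-i}{\theta+n-i-1}.
\]
After this substitution the left-hand side of \eqref{Znlimit} becomes the limit of
\[
\frac{1}{(\log n)^{m/2}}\sum_{\substack{j_1\neq\cdots\neq j_s\\ \sum_l j_l r_l\le n}}\varrho_n\!\Big(\sum_l j_l r_l\Big)\prod_{l=1}^s (\theta/j_l)^{r_l}(\{j_l\alpha\}-\{j_l\beta\})^{m_l},
\]
whereas the right-hand side of \eqref{Znlimit} is exactly the limit of $\frac{1}{(\log n)^{m/2}}\prod_{l=1}^s F_l$ with $F_l := \sum_{j=1}^n (\theta/j)^{r_l}(\{j\alpha\}-\{j\beta\})^{m_l}$ (there the indices are unconstrained and unordered, so the sum factors). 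So the first assertion amounts to showing these two quantities share a limit.

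Next I would fix a small $\delta>0$ and split the summation into the bulk part, where every $j_l\le\delta n$, and the tail part, where some $j_l>\delta n$. On the bulk part $\sum_l j_l r_l\le sm\,\delta n$, so Lemma \ref{estimates}(2) gives $\varrho_n(\sum_l j_l r_l) = 1 + O(\delta)$ uniformly; writing $\varrho_n = 1 + (\varrho_n - 1)$ and removing the distinctness restriction then leaves three error terms: (i) the $(\varrho_n-1)$-perturbation, (ii) the diagonal sum in which two indices coincide, and (iii) the tail. Term (ii) is harmless, since merging $j_a = j_b$ yields one factor with exponent $r_a+r_b\ge2$ and $\sum_j j^{-(r_a+r_b)}<\infty$, costing a power of $\log n$ relative to $(\log n)^{m/2}$. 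For terms (i) and (iii) the decisive point — and the main obstacle — is that one must \emph{not} bound the factors with $m_l = 1$ by $|\{j\alpha\}-\{j\beta\}|\le1$: indeed $\sum_j\frac1j|\{j\alpha\}-\{j\beta\}|\asymp\log n$, whereas the cancellation estimate \eqref{finiteness} gives $\big|\sum_j\frac1j(\{j\alpha\}-\{j\beta\})\big| = O(1)$, and the crude bound overcounts by powers of $\log n$ as soon as several $m_l$ equal $1$. Hence I would keep the $m_l = 1$ sums intact (invoking \eqref{finiteness}), bound the remaining factors crudely, and control $\varrho_n$ through Lemma \ref{estimates}(2): term (iii) then reduces to sums of exactly the type treated in Lemma \ref{deltabound} (together with Lemma \ref{estimates}(1)), while term (i) is $O(\delta(\log n)^{m/2})$; letting $\delta\to0$ completes the reduction. (When $\theta = 1$ one has $\varrho_n\equiv1$ and this step is vacuous; the difficulty lies entirely in $\theta\ne1$, where $\varrho_n(u)$ can grow like $n^{1-\theta}$ as $u$ approaches $n$.)

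Finally I would evaluate $\lim_n \frac{1}{(\log n)^{m/2}}\prod_{l=1}^s F_l$. By equidistribution of $(j\alpha,j\beta)$ modulo $1$ (Weyl's criterion, Theorem \ref{weylcriterion}) combined with Abel summation, $\frac1N\sum_{j\le N}(\{j\alpha\}-\{j\beta\})^q\to \mathbb{E}[(U-V)^q]$ for independent uniform $U,V$ on $[0,1]$ and every $q\ge1$; consequently $F_l = O(1)$ if $r_l\ge2$ or $m_l = 1$ (the latter by \eqref{finiteness}), $F_l = o(\log n)$ if $r_l = 1$ with $m_l$ odd (since then $\mathbb{E}[(U-V)^{m_l}] = 0$), and $F_l\sim \theta\,\mathbb{E}[(U-V)^{m_l}]\log n$ if $r_l = 1$ with $m_l$ even $\ge2$. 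Therefore $\prod_l F_l = \Theta\big((\log n)^a\big)$ with $a = \#\{l : r_l = 1,\ m_l\ge2\}$, and since the indices with $m_l\ge2$ contribute at least $2a$ to $m = \sum_l m_l$, we get $a\le m/2$, with equality if and only if $r_l = 1$ and $m_l = 2$ for all $l$. Hence the common limit is $0$ unless $r_l = 1$ and $m_l = 2$ for every $l$, in which case it equals $\prod_{l=1}^{m/2}\theta\,\mathbb{E}[(U-V)^2] = (\theta/6)^{m/2}$ — the value used in the proof of Proposition \ref{momentsk=1}.
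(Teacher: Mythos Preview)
Your proposal is correct and follows essentially the same route as the paper. Both arguments begin with Watterson's formula \eqref{diaconispoisson}, both isolate the parts with $m_l=1$ and treat them via the cancellation bound \eqref{finiteness}, and both use Lemma~\ref{deltabound} to dispose of the region where some $j_l$ is of order $n$. The only real organizational difference is that the paper first disposes of the case where some $m_l=1$ entirely (showing both sides of \eqref{Znlimit} vanish, since then $s'\le (m-1)/2$ after the $m_l=1$ factors are summed out) and only afterwards introduces the $\delta$-cutoff in the remaining case $m_l\ge2$ for all $l$; you instead keep all partitions together and run the bulk/tail decomposition uniformly, which forces you to carry the $m_l=1$ factors through terms~(i) and~(iii) and argue separately that they do not inflate the $\log n$ count. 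The paper's ordering is a little cleaner precisely because once every $m_l\ge2$ one has $s\le m/2$, and then the crude bound $|\{j\alpha\}-\{j\beta\}|\le1$ is harmless everywhere --- no delicate ``keep the $m_l=1$ sums intact'' bookkeeping is needed. Your final paragraph, which evaluates the surviving limit as $(\theta/6)^{m/2}$, goes a step beyond the lemma itself (the paper defers this computation to Lemma~\ref{calculation} and equation~\eqref{logarithmic}), but is of course correct; one small slip is that $\prod_l F_l$ is $O((\log n)^a)$ rather than $\Theta((\log n)^a)$ when some $m_l$ in your $a$-set is odd, though this does not affect the conclusion.
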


\begin{proof}
By \eqref{diaconispoisson}, we have
\begin{multline} \label{cyclesum}
\sum_{j_1 \neq ... \neq j_s} \mathbb{E}\Big[\prod_{l=1}^s \Big(C_{j_l}^{(n)}\Big)^{\underline{r_l}} \Big] (\{j_l\alpha \} - \{j_l\beta \})^{m_l} \\ 
= \sum_{\substack{j_1 \neq ... \neq j_s \\ \sum_{l=1}^s j_l r_l \le n}}  \prod_{i=0}^{\sum j_l r_l -1} \frac{n-i}{\theta+n-i-1} \prod_{l=1}^s \left( \frac{\theta}{j_l} \right)^{r_l} (\{j_l\alpha \} - \{j_l\beta \})^{m_l} 
\end{multline}
If there exist parts in the partition of size 1, let $s'$ be such that $m_l = 1$ for $l > s'$.  Then by \eqref{finiteness}, we can bound \eqref{cyclesum} (up to a constant) by

\begin{gather} 
\nonumber \sum_{\substack{j_1 \neq ... \neq j_{s'} \\ \sum_{l=1}^{s'} j_l r_l \le n}}  \prod_{i=0}^{\sum j_l r_l -1} \frac{n-i}{\theta+n-i-1} \prod_{l=1}^{s'} \left( \frac{\theta}{j_l} \right)^{r_l} (\{j_l\alpha \} - \{j_l\beta \})^{m_l} \\
\ll \sum_{\substack{1 \le j_l \le \delta n \\ 1 \le l \le s'}} \frac{1}{j_1...j_{s'}} \ll (\log n)^{(m-1)/2}
\end{gather}
where we have used Lemma \ref{deltabound}.  Thus, both sides of \eqref{Znlimit} are clearly 0 when there is a part of size 1, and we can assume all parts are size at least 2.  Then $s \le m/2$.  By Lemma \ref{deltabound}, 
\begin{align}
&\nonumber \lim_{n \to \infty} \frac{1}{(\log n)^{m/2}}  \sum_{\substack{j_1 \neq ... \neq j_s \\ \sum_{l=1}^s j_l r_l \le n}}  \prod_{i=0}^{\sum j_l r_l -1} \frac{n-i}{\theta+n-i-1} \prod_{l=1}^s \left( \frac{\theta}{j_l} \right)^{r_l} (\{j_l\alpha \} - \{j_l\beta \})^{m_l} \\
=&\nonumber \lim_{n \to \infty} \frac{1}{(\log n)^{m/2}}  \sum_{\substack{1 \le j_l \le \delta n \\ 1 \le l \le s}} \prod_{i=0}^{\sum j_l -1} \frac{n-i}{\theta+n-i-1} \prod_{l=1}^s \left( \frac{\theta}{j_l} \right)^{r_l} (\{j_l\alpha \} - \{j_l\beta \})^{m_l}
\end{align}
This limit is clearly 0 unless $r_l = 1$ and $m_l=2$ for all $l$ in which case \eqref{Znlimit} follows by taking $\delta \to 0$.
\end{proof}

We can finally obtain a simplified expression for the sum in \eqref{multinomial}.
\begin{lemma} \label{Znstirlinglemma}
Let $m = m_1+...+m_s$ be a partition of $m$.  Then \begin{multline}\label{Znstirling}
\lim_{n \to \infty} \frac{1}{(\log n)^{m/2}} \sum_{j_1 \neq ... \neq j_s} \mathbb{E}\Big[\prod_{l=1}^s \Big(C_{j_l}^{(n)}\Big)^{m_l} \Big] (\{j_l\alpha \} - \{j_l\beta \})^{m_l} \\ = \lim_{n \to \infty} \frac{1}{(\log n)^{m/2}} \sum_{j_1,...,j_s} \prod_{l=1}^s \frac{\theta}{j_l} (\{j_l\alpha \} - \{j_l\beta \})^{m_l}
\end{multline}
where the limit is 0 unless $m_l=2$ for all $l$.
\end{lemma}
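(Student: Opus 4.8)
The plan is to deduce Lemma~\ref{Znstirlinglemma} from Lemma~\ref{Znlimitlemma} by converting ordinary powers of the cycle counts into falling factorials. Recall the standard identity
\begin{equation*}
x^{m} = \sum_{r=1}^{m} \stirling{m}{r}\, x^{\underline{r}},
\end{equation*}
where $\stirling{m}{r}$ is the Stirling number of the second kind. Applying this to each factor $\bigl(C_{j_l}^{(n)}\bigr)^{m_l}$ and expanding the product gives
\begin{equation*}
\prod_{l=1}^{s} \bigl(C_{j_l}^{(n)}\bigr)^{m_l}
= \sum_{\substack{1 \le r_l \le m_l \\ 1 \le l \le s}} \Bigl( \prod_{l=1}^{s} \stirling{m_l}{r_l} \Bigr) \prod_{l=1}^{s} \bigl(C_{j_l}^{(n)}\bigr)^{\underline{r_l}} .
\end{equation*}
Substituting this into the left-hand side of \eqref{Znstirling} and using linearity of expectation, I obtain a finite sum over the tuples $(r_1,\dots,r_s)$ — of which there are at most $\prod_l m_l$, a bound independent of $n$ — of expressions each of which is exactly of the form appearing on the left of \eqref{Znlimit}, multiplied by its constant Stirling coefficient. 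The distinctness constraint $j_1 \neq \dots \neq j_s$ is simply carried along unchanged by this expansion.

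Since the number of terms does not depend on $n$, I can interchange the limit $n\to\infty$ with the sum over $(r_l)$ and apply Lemma~\ref{Znlimitlemma} termwise. By that lemma, the $(r_l)$-term converges to
\begin{equation*}
\Bigl( \prod_{l=1}^{s} \stirling{m_l}{r_l} \Bigr) \lim_{n\to\infty} \frac{1}{(\log n)^{m/2}} \sum_{j_1,\dots,j_s} \prod_{l=1}^{s} \Bigl(\frac{\theta}{j_l}\Bigr)^{r_l} (\{j_l\alpha\} - \{j_l\beta\})^{m_l},
\end{equation*}
which vanishes unless $r_l = 1$ and $m_l = 2$ for every $l$. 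Consequently, if some $m_l \neq 2$, every term is negligible and the whole limit is $0$. If $m_l = 2$ for all $l$, the only surviving term is $r_l = 1$ for all $l$; its Stirling coefficient is $\prod_l \stirling{2}{1} = 1$, and with $r_l = 1$ the displayed limit is exactly the right-hand side of \eqref{Znstirling}. This yields the claimed identity together with the vanishing assertion (and shows in particular that one limit exists if and only if the other does).

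I do not anticipate a real obstacle here: the analytic content — replacing the dependent cycle counts $C_{j_l}^{(n)}$ by independent Poisson factors (via Watterson's factorial moment formula \eqref{diaconispoisson}) and identifying which index patterns survive — has already been absorbed into Lemma~\ref{Znlimitlemma}, which in turn rests on Lemma~\ref{deltabound}. What remains is purely formal bookkeeping: that the Stirling expansion commutes with the finite sum and that the surviving coefficient equals $1$. The only point needing mild care is that terms with $1 \le r_l < m_l$ for some $l$ (where $\bigl(C_{j_l}^{(n)}\bigr)^{\underline{r_l}}$ has lower degree than the original power) are still covered by Lemma~\ref{Znlimitlemma}, which they are, since that lemma is stated for arbitrary $r_l$ with $1 \le r_l \le m_l$.
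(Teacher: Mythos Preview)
Your proposal is correct and follows essentially the same approach as the paper: expand ordinary powers into falling factorials via the Stirling-number identity, then apply Lemma~\ref{Znlimitlemma} termwise and note that the only surviving coefficient $\prod_l \stirling{m_l}{1}=1$. The paper's proof is slightly terser (it writes the expansion with unspecified constants $A_{(r_1,\dots,r_s)}$ and just notes $A_{(1,\dots,1)}=1$), but the argument is the same.
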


\begin{proof}
We rewrite $\displaystyle{\prod_{l=1}^s \Big(C_{j_l}^{(n)}\Big)^{m_l}}$ by making use of the following identity relating ordinary powers to falling factorial powers:
\begin{equation}\label{stirling} x^n = \sum_{r=0}^n \stirling{n}{r} x^{\underline{r}} \end{equation}
where the curly braces denote Stirling numbers of the second kind, i.e. the number of ways to partition $[n]$ into $r$ non-empty subsets.   

Then we have \begin{equation}\prod_{l=1}^s \Big(C_{j_l}^{(n)}\Big)^{m_l} = \sum_{1 \le r_l \le m_l} A_{(r_1,...,r_s)} \prod_{l=1}^s \Big( C_{j_l}^{(n)} \Big)^{\underline{r_l}} 
\end{equation}
for some constants $ A_{(r_1,...,r_s)}$ where note that $A_{(1,...,1)} = 1$.  The result follows from Lemma \ref{Znlimitlemma}.
\end{proof}

By logarithmic summability \cite[p. 1569]{wieand}, \begin{equation}\label{logarithmic} \lim_{n \to \infty} \frac{1}{\log n} \sum_{j=1}^n \frac{1}{j} (\{j\alpha \} - \{j\beta \})^{m} = \lim_{n \to \infty} \frac{1}{n} \sum_{j=1}^n (\{j\alpha \} - \{j\beta \})^{m}\end{equation}  We will need this limit for general $m$ in section \ref{k>1}.
\begin{lemma}\label{calculation} If $m$ is even,
\[\lim_{n \to \infty} \frac{1}{n} \sum_{j=1}^n (\{j\alpha \} - \{j\beta \})^{m} = \frac{2}{(m+1)(m+2)} \]  If $m$ is odd, the limit is 0.
\end{lemma}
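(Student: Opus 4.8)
The plan is to compute the limit $\lim_{n\to\infty}\frac{1}{n}\sum_{j=1}^n(\{j\alpha\}-\{j\beta\})^m$ via the theory of equidistributed sequences reviewed in Section~\ref{equidistributed sequences}. First I would observe that, by Weyl's Equidistribution Theorem (Theorem~\ref{weylthm}), the two-dimensional sequence $(\{j\alpha\},\{j\beta\})_{j\in\mathbb{N}}$ is equidistributed in $\mathbb{T}^2$, because $\alpha$ and $\beta$ are linearly independent over $\mathbb{Q}$ (this is exactly the hypothesis that guarantees $h_1\alpha+h_2\beta\notin\mathbb{Z}$ for every nonzero $(h_1,h_2)\in\mathbb{Z}^2$, so Weyl's Criterion applies). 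Since the function $(x,y)\mapsto(x-y)^m$ is Riemann integrable on $[0,1]^2$, equidistribution gives
\begin{equation*}
\lim_{n\to\infty}\frac{1}{n}\sum_{j=1}^n(\{j\alpha\}-\{j\beta\})^m=\int_0^1\int_0^1 (x-y)^m\,dx\,dy.
\end{equation*}

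Next I would evaluate this double integral by an elementary calculation. Integrating first in $x$ gives $\int_0^1(x-y)^m\,dx=\frac{(1-y)^{m+1}-(-y)^{m+1}}{m+1}=\frac{(1-y)^{m+1}+(-1)^m y^{m+1}}{m+1}$. Integrating the result in $y$ yields
\begin{equation*}
\int_0^1\int_0^1(x-y)^m\,dx\,dy=\frac{1}{m+1}\left(\frac{1}{m+2}+\frac{(-1)^m}{m+2}\right)=\frac{1+(-1)^m}{(m+1)(m+2)}.
\end{equation*}
When $m$ is even this equals $\frac{2}{(m+1)(m+2)}$, and when $m$ is odd it equals $0$, which is exactly the claimed statement. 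Alternatively one can note that by symmetry the integrand is an odd function of $x-y$ when $m$ is odd (swapping $x$ and $y$ negates it), so the integral vanishes, and for even $m$ substitute $u=x-y$ to reduce to a one-variable computation.

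I do not expect any serious obstacle here: the only point requiring a little care is justifying that equidistribution of the sequence $(\{j\alpha\},\{j\beta\})$ transfers to convergence of the averages of $(x-y)^m$, which follows from the standard fact that equidistribution is equivalent to convergence of averages of all Riemann-integrable test functions (and $(x-y)^m$ is even continuous), together with the verification via Weyl's Criterion that $\mathbb{Q}$-linear independence of $\alpha,\beta$ forces $h\cdot(\alpha,\beta)\notin\mathbb{Z}$ for all nonzero $h\in\mathbb{Z}^2$. The rest is the routine integral above, so the lemma follows.
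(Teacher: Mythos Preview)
Your proof is correct and follows essentially the same approach as the paper: both use equidistribution of $(\{j\alpha\},\{j\beta\})$ in $\mathbb{T}^2$ to reduce to the double integral $\int_0^1\int_0^1(x-y)^m\,dx\,dy$, then evaluate it (the paper via binomial expansion of $\mathbb{E}[(U_1-U_2)^m]$, you by direct iterated integration).
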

\begin{proof}
The sequence $(j\alpha , j\beta )$ is uniformly distributed mod 1.  (See Definition \ref{equidistributed} and Theorem \ref{weylcriterion}).  Thus, by exercise 1.6.3 in \cite[p. 52]{kuipers}, for Riemann integrable functions $f(x,y)$, \[\lim_{n \to \infty} \frac{1}{n} \sum_{j=1}^n f(\{j\alpha \}, \{j\beta \}) = \int_0^1 \int_0^1 f(x,y)dx dy.\]   Therefore, \[\lim_{n \to \infty} \sum_{j=1}^n \frac{(\{j\alpha \} - \{j\beta \})^m}{n} = \mathbb{E} [(U_1 - U_2)^m] = \mathbb{E} \bigg[\sum_{i=0}^m \binom{m}{ i} U_1^i (-U_2)^{m-i}\bigg] \] where $U_1$ and $U_2$ are independent variables uniform on $[0,1]$.  If $m$ is odd, this expectation is 0 by symmetry.  If $m$ is even, the expectation equals 
\begin{equation*}
\begin{aligned}
\sum_{i=0}^m \binom{m}{ i} \frac{1}{(i+1)(m-i+1)} (-1)^{m-i} &= \sum_{i=0}^m \frac{1}{(m+1)(m+2)} \binom{m+2}{ i+1} (-1)^i \\ &= \frac{2}{(m+1)(m+2)} 
\end{aligned}
\end{equation*}
\end{proof}

Now combining \eqref{multinomial}, Lemma \ref{Znstirlinglemma}, and Lemma \ref{calculation}, we see that the odd moments of $Z_n$ converge to 0 and that even moments converge to \[\lim_{n \to \infty} \frac{m!}{2^{m/2}} \frac{1}{(m/2)!} \frac{1}{(\log n)^{m/2}} \bigg(\sum_{j=1}^n \frac{\theta}{j} (\{j\alpha \} - \{j\beta \})^2 \bigg)^{m/2} = \frac{m!}{2^{m/2}} \frac{1}{(m/2)!} \left(\frac{\theta}{6}\right)^{m/2} \]  

This proves Proposition \ref{momentsk=1}.
\begin{remark}
If $\alpha$ and $\beta$ are not irrationals linearly independent over $\mathbb{Q}$, Wieand \cite{wieand} has also calculated the limit of the quadratic sums ($m=2$ in Lemma \ref{calculation}) for various cases.  A modification of this moment method then gives that the limiting distribution is a normal distribution with variance given by the limit of the quadratic sum. 
\end{remark}

\begin{remark} \label{Znstar}
Let $W_j$ be independent Poisson variables with parameter $\theta/j$.  For each $n$, define \begin{equation} Z_n^* := \frac{1}{\sqrt{\log n}} \sum_{j=1}^n W_j (\{j\alpha \} - \{j\beta \}) 
\end{equation} 
It is easy to see that Lemma \ref{Znlimitlemma}, and therefore Lemma \ref{Znstirlinglemma} hold if we replace the random variables $C_{j}^{(n)}$ with $W_j$ and therefore all moments of $Z_n$ and $Z_n^*$ have the same limit as $n \to \infty$.  The method of moments thus gives an alternative to the Feller coupling method of seeing that $Z_n$ and $Z_n^*$ converge to the same limit (since the normal distribution is characterized by its moments).  
\end{remark}

\section{Moment method for $k > 1$} \label{k>1}

In this section, we prove Theorem \ref{momentsthm}.  With the same notation as in the previous section, we have

\begin{multline} \label{Ymultinomial}
\big(Y_{n,k}\big)^m = \frac{1}{n^{m(k-1)}} \sum_{(m_i) \vdash m} \binom{m}{ m_1,...,m_s} \frac{1}{\prod_{l = 1}^m c_l!} \\
\sum_{j_1 \neq ... \neq j_s} \prod_{l=1}^s j_l^{m_l(k-1)} \big(C_{j_l}^{(n)}\big)^{m_l} (\{j_l\alpha \} - \{j_l\beta \})^{m_l} 
\end{multline}

First, we prove the following Riemann sum approximation:  
\begin{lemma} \label{Yndeltaboundlemma}
Let $m = m_1+...+m_s$ be a partition of $m$ and $r = r_1+...+r_s$ where $1 \le r_l \le m_l$.  As usual, all indices $1 \le j_1,...,j_s \le n$.  Then

\begin{align} \label{Yndeltabound}
& \nonumber \lim_{n \to \infty} \frac{1}{n^{m(k-1) +s-r}} \sum_{1 \le \sum j_l r_l \le n} \prod_{l=1}^s j_l^{m_l(k-1) - r_l} \prod_{i=0}^{\sum j_l r_l -1} \frac{n-i}{\theta+n-i-1} \\
= & \int \limits_{0  \le \sum x_l r_l \le 1}  \prod_{l=1}^s x_l^{m_l(k-1) - r_l} \left( \frac{1}{1-\sum x_l r_l} \right)^{1-\theta} dx_1...dx_s
\end{align}

\end{lemma}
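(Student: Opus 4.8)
The plan is to read the left-hand side of \eqref{Yndeltabound} as a Riemann sum for the integral on the right under the rescaling $x_l = j_l/n$. Abbreviate $N = \sum_{l=1}^s j_l r_l$, write $P_n = \prod_{i=0}^{N-1}\frac{n-i}{\theta+n-i-1}$ for the inner product appearing in \eqref{Yndeltabound}, and let
\[
g(x_1,\dots,x_s) = \prod_{l=1}^s x_l^{m_l(k-1)-r_l}\left(\frac{1}{1-\sum_l x_l r_l}\right)^{1-\theta}
\]
denote the asserted limiting integrand on the open simplex $\Delta = \{x_l > 0,\ \sum_l x_l r_l < 1\}$. Since $k \ge 2$, every exponent obeys $m_l(k-1)-r_l \ge m_l - r_l \ge 0$, so $\prod_l j_l^{m_l(k-1)-r_l} = n^{m(k-1)-r}\prod_l (j_l/n)^{m_l(k-1)-r_l}$ contributes the power $n^{m(k-1)-r}$ to the normalization and leaves a bounded continuous function of $(j_l/n)$ on $[0,1]^s$; the remaining $n^s$ in the normalization $n^{m(k-1)+s-r}$ is the reciprocal volume of a lattice cell, which is what turns $\frac{1}{n^s}\sum_{j}$ into $\int dx_1\cdots dx_s$. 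The only subtle factor is $P_n$, for which Lemma~\ref{estimates}(2), applied with its parameter equal to $N$, gives the uniform upper bound \eqref{thetabound}, namely $P_n \le A(\theta)\left(\frac{\theta+n-1}{\theta+n-N}\right)^{1-\theta}$, and the sharper asymptotic \eqref{thetasim}, $P_n \sim \left(\frac{\theta+n-1}{\theta+n-N}\right)^{1-\theta}$ whenever $n - N = \Omega(n)$; on that range $\frac{\theta+n-1}{\theta+n-N} \to \frac{1}{1-N/n}$.

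First I would treat the bulk. Fix small $\epsilon > 0$ and restrict the sum to $N \le (1-\epsilon)n$. There $n - N \ge \epsilon n$, so \eqref{thetasim} applies uniformly, and the summand indexed by $(j_1,\dots,j_s)$, divided by $n^{m(k-1)+s-r}$, equals $\frac{1}{n^s}\,g(j_1/n,\dots,j_s/n)\,(1+o(1))$. The function $g$, extended by $0$ outside $\Delta_\epsilon := \{x_l \ge 0,\ \sum_l x_l r_l \le 1-\epsilon\}$, is bounded (by $\epsilon^{-|1-\theta|}$) with discontinuities only on the measure-zero set $\partial\Delta_\epsilon$, hence Riemann integrable, so this truncated sum converges to $\int_{\Delta_\epsilon} g\,dx_1\cdots dx_s$ as $n \to \infty$.

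It then remains to bound the boundary strip $(1-\epsilon)n < N \le n$ uniformly in $n$, and to show $\int_{\Delta\setminus\Delta_\epsilon} g \to 0$ as $\epsilon \to 0$. The latter is just integrability of $g$ on $\Delta$: its only singularity lies along the hyperplane $\sum_l x_l r_l = 1$, transverse to which the relevant factor is a power with exponent $1-\theta < 1$, an integrable singularity. For the discrete strip, use \eqref{thetabound}, bound the monomials by $1$, and group the lattice points by the value of $h = n - N$: there are $O(n^{s-1})$ tuples with a given $h$, and $\frac{\theta+n-1}{\theta+n-N} \ll \frac{n}{h+1}$, so the strip contributes at most a constant times $\frac{1}{n}\sum_{0 \le h < \epsilon n}\left(\frac{n}{h+1}\right)^{1-\theta}$. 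For $\theta \ge 1$ each term is $\le 1$ and this is $\ll \epsilon$; for $0 < \theta < 1$ it is $\ll n^{-\theta}\sum_{1 \le h \le \epsilon n} h^{\theta-1} \ll n^{-\theta}(\epsilon n)^{\theta} = \epsilon^{\theta}$. Either way the strip contributes $O(\epsilon^{\min(1,\theta)})$, uniformly in $n$. Combining the bulk limit with these two tail bounds and letting $\epsilon \to 0$ yields \eqref{Yndeltabound}.

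The main obstacle is exactly this boundary region in the regime $\theta < 1$, where the limiting integrand $g$ is unbounded so one cannot apply Riemann-sum convergence directly; the way out is the two-sided control of $P_n$ from Lemma~\ref{estimates}(2) combined with the elementary lattice-point count $O(n^{s-1})$ per level set of $N$ and the summability $\sum_{h \le M} h^{\theta-1} \asymp M^{\theta}$. A routine point worth recording is the replacement of $\left(\frac{\theta+n-1}{\theta+n-N}\right)^{1-\theta}$ by $\left(\frac{1}{1-N/n}\right)^{1-\theta}$ in the bulk: the ratio of the two bases is $1 + O(1/(n-N)) = 1 + o(1)$ there, and raising to the fixed power $1-\theta$ preserves this.
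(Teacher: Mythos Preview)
Your proof is correct and follows essentially the same Riemann-sum approach as the paper, invoking Lemma~\ref{estimates}(2) in the same two ways: the asymptotic \eqref{thetasim} on the bulk and the uniform bound \eqref{thetabound} near the boundary. The paper compresses the boundary analysis into the phrase ``by continuity of the integral, and \eqref{thetasim}, we can then replace the first $\ll$ with $\sim$,'' whereas you make the $\epsilon$-truncation and the level-set count $|\{N = n-h\}| = O(n^{s-1})$ explicit; this is a welcome elaboration rather than a different argument.
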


\begin{proof}
By \eqref{thetabound},
\begin{align}
&\nonumber \frac{1}{n^{m(k-1) +s-r}} \sum_{1 \le \sum j_l r_l \le n} \prod_{l=1}^s j_l^{m_l(k-1) - r_l} \prod_{i=0}^{\sum j_l r_l -1} \frac{n-i}{\theta+n-i-1} \\
\ll &\nonumber \frac{1}{n^{m(k-1) +s-r}} \sum_{1 \le \sum j_l r_l \le n} \prod_{l=1}^s j_l^{m_l(k-1) - r_l} \left( \frac{\theta+n-1}{\theta+n-\sum j_l r_l} \right)^{1-\theta} \\
\sim &\nonumber \frac{1}{n^s} \sum_{1 \le \sum j_l r_l \le n} \prod_{l=1}^s \Big(\frac{j_l}{n}\Big)^{m_l(k-1) - r_l} \left( \frac{1}{1-\sum j_l r_l/n} \right)^{1-\theta} \\
\sim & \int \limits_{0 \le \sum x_l r_l \le 1}  \prod_{l=1}^s x_l^{m_l(k-1) - r_l} \left( \frac{1}{1-\sum x_l r_l} \right)^{1-\theta} dx_1...dx_s
\end{align}
One can check that this integral is finite for $\theta > 0$.  By continuity of the integral, and \eqref{thetasim}, we can then replace the first $\ll$ with $\sim$ and the result follows.
\end{proof}

The following lemma computes the inner sum in \eqref{Ymultinomial}:
\begin{lemma} \label{Ynlimitlemma}
Let $m = m_1+...+m_s$ be a partition of $m$.  Then
\begin{multline} \label{Ynlimit}
\lim_{n \to \infty} \frac{1}{n^{m(k-1)}}  \sum_{j_1 \neq ... \neq j_s} \prod_{l=1}^s j_l^{m_l(k-1)} \mathbb{E}\Big[\prod_{l=1}^s C_{j_l}^{(n)} \Big] (\{j_l\alpha \} - \{j_l\beta \})^{m_l} \\ = \lim_{n \to \infty} \frac{1}{n^{m(k-1)}} \sum_{j_1+...+j_s\le n} \left(\frac{n}{n-(j_1+...+j_s)} \right)^{1-\theta} \prod_{l=1}^s \big( \theta j_l^{m_l(k-1) - 1} \big) (\{j_l\alpha \} - \{j_l\beta \})^{m_l}
\end{multline}

\end{lemma}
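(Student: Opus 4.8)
The plan is to apply the Watterson moment formula \eqref{diaconispoisson} to $\mathbb{E}\bigl[\prod_{l=1}^s C_{j_l}^{(n)}\bigr]$ and then pass from the left‑hand side of \eqref{Ynlimit} to the right‑hand side by two successive approximations: dropping the distinctness constraint on the indices $j_l$, and replacing the Watterson product $\prod_{i=0}^{L-1}\frac{n-i}{\theta+n-i-1}$ (with $L:=j_1+\cdots+j_s$) by $\bigl(\tfrac{n}{n-L}\bigr)^{1-\theta}$. In every error estimate the equidistribution factors $(\{j_l\alpha\}-\{j_l\beta\})^{m_l}$ are bounded by $1$ in absolute value, so I discard them immediately and compare against the purely arithmetic sums governed by Lemma \ref{Yndeltaboundlemma}.

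First, because $j_1,\dots,j_s$ are distinct on the left‑hand side, $\prod_l C_{j_l}^{(n)}=\prod_l\bigl(C_{j_l}^{(n)}\bigr)^{\underline{1}}$, so \eqref{diaconispoisson} applies with $b_{j_l}=1$ for $1\le l\le s$ and all other $b_j=0$; the quantity called $l$ in \eqref{diaconispoisson} is then $L$, and the Poisson factorial moment equals $\prod_l\theta/j_l$. This gives
\begin{equation*}
\mathbb{E}\Bigl[\prod_{l=1}^s C_{j_l}^{(n)}\Bigr]=\mathds{1}(L\le n)\prod_{l=1}^s\frac{\theta}{j_l}\prod_{i=0}^{L-1}\frac{n-i}{\theta+n-i-1},
\end{equation*}
so after absorbing the $1/j_l$ into the monomials the left side of \eqref{Ynlimit} becomes $n^{-m(k-1)}$ times the sum over $j_1\neq\cdots\neq j_s$ with $L\le n$ of $\prod_l\theta\,j_l^{m_l(k-1)-1}(\{j_l\alpha\}-\{j_l\beta\})^{m_l}$ against the weight $\prod_{i=0}^{L-1}\frac{n-i}{\theta+n-i-1}$; note that $m_l(k-1)-1\ge 0$ since $m_l\ge 1$ and $k\ge 2$, so small $j_l$ cause no trouble. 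Next, the difference between summing over distinct $j_l$ and over all $(j_1,\dots,j_s)$ with $L\le n$ is a bounded number of ``diagonal'' sums in which at least one index is identified with another; on each such diagonal one combines the exponents of the coinciding variable (the merged constraint carries that variable with multiplicity), and the estimate of Lemma \ref{Yndeltaboundlemma} applied to the reduced partition shows each diagonal sum is $O(n^{m(k-1)-1})=o(n^{m(k-1)})$, hence negligible. This replaces $\sum_{j_1\neq\cdots\neq j_s}$ by $\sum_{j_1+\cdots+j_s\le n}$.

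It remains to replace $\prod_{i=0}^{L-1}\frac{n-i}{\theta+n-i-1}$ by $\bigl(\tfrac{n}{n-L}\bigr)^{1-\theta}$. I would fix small $\varepsilon>0$ and split the sum at $L\le(1-\varepsilon)n$ versus $L>(1-\varepsilon)n$. On the first range $n-L=\Omega(n)$, so \eqref{thetasim} yields $\prod_{i=0}^{L-1}\frac{n-i}{\theta+n-i-1}\sim\bigl(\tfrac{\theta+n-1}{\theta+n-L}\bigr)^{1-\theta}\sim\bigl(\tfrac{n}{n-L}\bigr)^{1-\theta}$ uniformly, so the two normalized sums over this range have the same limit. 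On the second range, \eqref{thetabound} bounds the Watterson weight by a constant times $\bigl(\tfrac{\theta+n-1}{\theta+n-L}\bigr)^{1-\theta}$, and Lemma \ref{Yndeltaboundlemma} (with every $r_l=1$) shows that the normalized tail — of either weight — is, up to $o(1)$, at most $\int_{(1-\varepsilon)\le\sum x_l\le 1}\prod_l x_l^{m_l(k-1)-1}(1-\sum x_l)^{\theta-1}\,dx$. Since the limiting integral of Lemma \ref{Yndeltaboundlemma} is absolutely convergent for $\theta>0$, this tail tends to $0$ as $\varepsilon\to0$; letting $n\to\infty$ first and then $\varepsilon\to0$ establishes \eqref{Ynlimit}.

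The step I expect to be the genuine obstacle is this last one in the regime $0<\theta<1$, where $\bigl(\tfrac{n}{n-L}\bigr)^{1-\theta}$ blows up as $L\uparrow n$ and one must be sure the near‑diagonal band $L>(1-\varepsilon)n$ contributes negligibly; the absolute convergence of the integral in Lemma \ref{Yndeltaboundlemma} is precisely what makes this go through, and the a priori bound $|\{j_l\alpha\}-\{j_l\beta\}|\le1$ is what lets one strip the equidistribution factors before invoking it. For $\theta\ge1$ the weight stays bounded and the boundary poses no difficulty.
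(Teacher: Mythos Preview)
Your argument is correct and rests on the same two ingredients as the paper's: Watterson's factorial--moment formula \eqref{diaconispoisson} and the Riemann--sum comparison of Lemma~\ref{Yndeltaboundlemma}. You make explicit two sub--steps that the paper compresses into the single phrase ``by \eqref{diaconispoisson} and Lemma~\ref{Yndeltaboundlemma}'': the removal of the distinctness constraint (your diagonal estimate, which is exactly Lemma~\ref{Yndeltaboundlemma} applied with one merged index carrying $r'_l=2$, so the normalization drops to $n^{m(k-1)-1}$), and the $\varepsilon$--split that replaces the Watterson product by $(n/(n-L))^{1-\theta}$ away from the boundary while controlling the strip $L>(1-\varepsilon)n$ by the tail of the Dirichlet integral.

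The one structural difference is that the paper actually proves the more general identity with $\prod_l\bigl(C_{j_l}^{(n)}\bigr)^{\underline{r_l}}$ for arbitrary $1\le r_l\le m_l$, observing that the limit vanishes unless every $r_l=1$, and then appeals to the Stirling expansion \eqref{stirling}. For the lemma as literally stated (first powers only) that detour is idle and your direct $r_l=1$ computation suffices; the paper's extra generality is what is genuinely needed a step later, since the multinomial expansion \eqref{Ymultinomial} carries the full powers $\bigl(C_{j_l}^{(n)}\bigr)^{m_l}$, and it is the combination of the vanishing for $r_l>1$ with \eqref{stirling} that reduces those powers to the first--power case you treat.
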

\begin{proof}
Let $r_1,...,r_s$ be integers such that $1 \le r_l \le m_l$.  Then by \eqref{diaconispoisson} and Lemma \ref{Yndeltaboundlemma},
\begin{multline*}
\lim_{n \to \infty} \frac{1}{n^{m(k-1)}}  \sum_{j_1 \neq ... \neq j_s} \prod_{l=1}^s j_l^{m_l(k-1)} \mathbb{E}\Big[\prod_{l=1}^s \Big(C_{j_l}^{(n)}\Big)^{\underline{r_l}} \Big] (\{j_l\alpha \} - \{j_l\beta \})^{m_l} \\ = \lim_{n \to \infty} \frac{1}{n^{m(k-1)}} \sum_{j_1+...+j_s\le n} \left(\frac{n}{n-(j_1+...+j_s)} \right)^{1-\theta} \prod_{l=1}^s \big( \theta j_l^{m_l(k-1) - 1} \big) (\{j_l\alpha \} - \{j_l\beta \})^{m_l}
\end{multline*}
if $r_l=1$ for all $l$ and otherwise the limit is 0.  Equation \eqref{Ynlimit} then follows from formula \eqref{stirling}.

\end{proof}

Wieand \cite{wieand} uses Exercise 65 from Szeg\H o and P\' olya's \textit{Problems and Theorems in Analysis I} \cite{polya} to prove \eqref{logarithmic}.  To compute the limit in \eqref{Ynlimit}, we need a slightly modified version of this exercise.  

\begin{lemma}\label{szego} 
Define the following data:

Let $f(n)$ and $g_\delta(n)$ be increasing functions for each $\delta > 0$.  Let $s_{n i}$, $1 \le i \le f(n)$ be a bounded array such that for each $\delta > 0$, there exists some limiting value $L$ such that \[\lim_{n \to \infty} \max_{i > g_\delta(n)} |s_{n i} - L| = 0 \]

Also, let $p_{n i}$, $1 \le i \le f(n)$ be an array such that $\displaystyle{\sum_{i=1}^{f(n)} p_{n i} = 1}$, $\displaystyle{\sum_{i=1}^{f(n)} |p_{n i}|}$ is bounded, and such that $\displaystyle{\lim_{\delta \to 0} \limsup_{n \to \infty}\sum_{i=1}^{g_\delta(n)} |p_{n i}| = 0}$.  Finally, let $\displaystyle{t_n = \sum_{i=1}^{f(n)} p_{n i} s_{n i}}$.  Then $\displaystyle{\lim_{n \to \infty} t_n = L}$.
\end{lemma}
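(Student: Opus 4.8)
The plan is to run a Silverman--Toeplitz-style splitting argument, isolating the ``head'' indices $i \le g_\delta(n)$, on which the weights $p_{ni}$ carry little total mass, from the ``tail'' indices $i > g_\delta(n)$, on which $s_{ni}$ is uniformly close to $L$. First I would observe that $L$ is finite, being a limit of entries of a bounded array, so there is a constant $C$ with $|s_{ni} - L| \le C$ for all $n$ and all $i$ in range. Using $\sum_{i=1}^{f(n)} p_{ni} = 1$, write
\[
t_n - L \;=\; \sum_{i=1}^{f(n)} p_{ni}(s_{ni} - L) \;=\; \underbrace{\sum_{i=1}^{g_\delta(n)} p_{ni}(s_{ni}-L)}_{\text{head}} \;+\; \underbrace{\sum_{i=g_\delta(n)+1}^{f(n)} p_{ni}(s_{ni}-L)}_{\text{tail}},
\]
with the convention that the tail is empty (and the head is the whole sum) in the harmless case $g_\delta(n) \ge f(n)$.

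For the tail, bound $|\text{tail}| \le \bigl(\max_{i > g_\delta(n)} |s_{ni}-L|\bigr)\sum_{i=1}^{f(n)}|p_{ni}| \le M \max_{i > g_\delta(n)}|s_{ni}-L|$, where $M$ is the uniform bound on $\sum_i |p_{ni}|$; by the hypothesis on the array $s_{ni}$, this tends to $0$ as $n \to \infty$ for each fixed $\delta$. For the head, bound $|\text{head}| \le C \sum_{i=1}^{g_\delta(n)}|p_{ni}|$. Combining the two estimates, for each fixed $\delta > 0$ we get $\limsup_{n\to\infty}|t_n - L| \le C\,\limsup_{n\to\infty}\sum_{i=1}^{g_\delta(n)}|p_{ni}|$. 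Now let $\delta \to 0$ and invoke $\lim_{\delta\to 0}\limsup_{n\to\infty}\sum_{i=1}^{g_\delta(n)}|p_{ni}| = 0$ to conclude $\limsup_{n\to\infty}|t_n - L| = 0$, i.e. $t_n \to L$.

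I do not expect a genuine obstacle: the only points requiring care are (i) taking the limits in the correct order --- first $n \to \infty$ with $\delta$ held fixed, then $\delta \to 0$ --- so that the tail is handled before the head mass is made small, and (ii) the edge case $g_\delta(n) > f(n)$, which I would absorb by interpreting all index ranges as intersected with $[1, f(n)]$. The monotonicity of $f$ and $g_\delta$ is not actually used in the argument; it is presumably stated because in the application ($f(n) = n$, $g_\delta(n) = \lfloor \delta n \rfloor$) it holds trivially and streamlines the bookkeeping when this lemma is applied to the sum in \eqref{Ynlimit}.
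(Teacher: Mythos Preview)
Your proposal is correct and is precisely the Silverman--Toeplitz-type splitting argument that the paper has in mind when it says the proof is ``a simple modification of the argument in Szeg\H{o} and P\'olya's exercise''; the paper gives no further details. Your observations about the order of limits and the harmless edge case $g_\delta(n) > f(n)$ are exactly the small points one needs to check, and your remark that the monotonicity of $f$ and $g_\delta$ is not actually used is also accurate.
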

\begin{proof}
The proof is a simple modification of the argument in Szeg\H o and P\' olya's exercise. \end{proof}
 
We now apply this lemma to the sum in \eqref{Ynlimit}.  First, consider the simplest case $s=1$ and $\theta=1$.
\begin{lemma}\label{s=1} If $m$ is even (and positive), \begin{equation} \lim_{n \to \infty} \frac{1}{n^{m(k-1)}} \sum_{j=1}^n j^{m(k-1)-1} (\{j\alpha \} - \{j\beta \})^m = \frac{2}{(k-1)m(m+1)(m+2)}\end{equation}  If $m$ is odd, the limit is 0.
\end{lemma}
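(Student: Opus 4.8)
The plan is to use summation by parts to reduce the assertion to the mean equidistribution fact already recorded in Lemma~\ref{calculation}, and then to invoke the summability lemma (Lemma~\ref{szego}) with an explicit choice of weights. Set $a = m(k-1)$, $c_j = (\{j\alpha\} - \{j\beta\})^m$, and $G(N) = \sum_{j=1}^N c_j$. Since $|\{j\alpha\} - \{j\beta\}| < 1$ we have $|c_j| \le 1$, and Lemma~\ref{calculation} gives $G(N)/N \to c$, where $c = \tfrac{2}{(m+1)(m+2)}$ when $m$ is even and $c = 0$ when $m$ is odd. Abel summation with $a_j = c_j$ and $b_j = j^{a-1}$ gives
\[
\sum_{j=1}^n j^{a-1} c_j \;=\; n^{a-1} G(n) \;-\; \sum_{j=1}^{n-1} G(j)\bigl((j+1)^{a-1} - j^{a-1}\bigr),
\]
so after dividing by $n^a$ the first term contributes $G(n)/n \to c$, and it remains to handle the second.

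I would rewrite the second term as $\tfrac{1}{n^a}\sum_{j=1}^{n-1} \tfrac{G(j)}{j}\, w_j$ with $w_j := j\bigl((j+1)^{a-1} - j^{a-1}\bigr) \ge 0$, and apply Lemma~\ref{szego} with $f(n) = n-1$, $g_\delta(n) = \lfloor \delta n \rfloor$, $s_{nj} = G(j)/j$ (a bounded array, independent of $n$, converging to $c$, so that $\max_{j > g_\delta(n)}|s_{nj} - c| \to 0$ since $g_\delta(n) \to \infty$), and $p_{nj} = w_j / \sum_{i=1}^{n-1} w_i$. A short secondary summation by parts yields the identity $\sum_{j=1}^{n-1} w_j = n^a - n^{a-1} - \sum_{i=1}^{n-1} i^{a-1}$, which together with $\sum_{i=1}^{N} i^{a-1} = \tfrac{1}{a} N^a + O(N^{a-1})$ gives $\sum_{j=1}^{n-1} w_j \sim \tfrac{a-1}{a} n^a$. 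The hypotheses of Lemma~\ref{szego} are then immediate: $\sum_j p_{nj} = 1$, $\sum_j |p_{nj}| = 1$ is bounded, and $\limsup_n \sum_{j \le \lfloor \delta n \rfloor} p_{nj} = \delta^a \to 0$ as $\delta \to 0$ because $\sum_{j \le \delta n} w_j \sim \tfrac{a-1}{a}(\delta n)^a$. Hence $\sum_j p_{nj} s_{nj} \to c$, that is, $\tfrac{1}{n^a}\sum_{j=1}^{n-1} G(j)\bigl((j+1)^{a-1} - j^{a-1}\bigr) \to c\,\tfrac{a-1}{a}$.

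Combining the two pieces gives $\tfrac{1}{n^a}\sum_{j=1}^n j^{a-1} c_j \to c - c\,\tfrac{a-1}{a} = \tfrac{c}{a}$, which equals $\tfrac{2}{m(k-1)(m+1)(m+2)}$ for even $m$ and $0$ for odd $m$; the degenerate case $a = 1$ (which forces $m$ odd, hence $c = 0$) has to be noted separately, but it is trivial since then $w_j \equiv 0$, the second sum vanishes, and $G(n)/n \to 0$. The only mildly technical points are the two elementary summation-by-parts identities and the mass-escape condition $\lim_{\delta \to 0}\limsup_n \sum_{j \le \lfloor\delta n\rfloor} p_{nj} = 0$ required by Lemma~\ref{szego}; I expect the latter bookkeeping to be the most error-prone step, though it is routine. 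The conceptual content is simply that a polynomially weighted average of $(c_j)$ factorizes in the limit into the weight integral $\int_0^1 x^{a-1}\,dx = 1/a$ times the Ces\`aro mean $c$ of $(c_j)$, so that no equidistribution input beyond Lemma~\ref{calculation} is needed.
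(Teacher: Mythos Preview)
Your proposal is correct and follows essentially the same route as the paper: Abel summation combined with Lemma~\ref{szego} and the Ces\`aro limit from Lemma~\ref{calculation}. The only cosmetic difference is that the paper applies Lemma~\ref{szego} once to the full weighted sum $t_n=\bigl(\sum_j b_j\bigr)^{-1}\sum_j b_j w_j$ with $b_j=j^{a-1}$, using signed weights $p_{ni}=i(b_i-b_{i+1})/\sum_j b_j$ (all negative except $p_{nn}$), whereas you peel off the boundary term $n^{a-1}G(n)/n^a$ first and then invoke Lemma~\ref{szego} with the resulting nonnegative weights; the two presentations are equivalent.
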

\begin{proof}
Set \[s_{n i} = s_i := \frac{1}{i} \sum_{j=1}^i w_j \] and \[t_n = \frac{1}{\sum_{j=1}^n b_j} \sum_{j=1}^n b_j w_j \] where $b_j = j^{m(k-1)-1}$ and  $w_j = (\{j\alpha \} - \{j\beta \})^m$.
A simple calculation (essentially Abel summation) shows that $t_n = \sum \limits_{i=1}^n p_{n i} s_i$ where $\displaystyle{p_{n i} = \frac{i(b_i - b_{i+1})}{\sum_{j=1}^n b_j}}$ when $i < n$ and $\displaystyle{p_{nn} = \frac{n b_n}{\sum_{j=1}^n b_j}}$.  Note that all the terms $p_{n i}$ are negative except $p_{n n}$.  Setting $f(n) = n$ and $g_\delta(n) = \delta n$, we see that all the conditions for the array $p_{ni}$ in Lemma \ref{szego} are satisfied.  Then \[\lim_{n \to \infty} \frac{m(k-1)}{n^{m(k-1)}} \sum_{j=1}^n j^{m(k-1)-1} (\{j\alpha \} - \{j\beta \})^m = \lim_{n \to \infty} t_n = \lim_{n \to \infty} \frac{1}{n} \sum_{j=1}^n (\{j\alpha \} - \{j\beta \})^m \]
The result follows by Lemma \ref{calculation}.
\end{proof}

More generally, we need to deal with a multi-dimensional sequence.

\begin{lemma}\label{generalmainlemma}
Define a partition $\displaystyle{m = m_1+...+m_s}$ such that $\displaystyle{m_1 \ge... \ge m_s \ge 1}$.  If all $m_l$ are even, 

\begin{multline}
\lim_{n \to \infty} \frac{1}{n^{m(k-1)}} \sum_{j_1+...+j_s\le n} \left(\frac{n-(j_1+...+j_s)}{n} \right)^{\theta-1} \prod_{l=1}^s \big( \theta j_l^{m_l(k-1) - 1} \big) (\{j_l\alpha \} - \{j_l\beta \})^{m_l} \\ = \frac{\Gamma(\theta)}{\Gamma(m(k-1)+\theta)} \prod_{l=1}^s \frac{2 \theta \Gamma(m_l(k-1))}{(m_l+1)(m_l+2)}
\end{multline}
where $\Gamma(z)$ is the Gamma function.  Otherwise, the limit is 0.

\end{lemma}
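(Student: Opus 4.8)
Put $c_p:=\tfrac{2}{(p+1)(p+2)}$ for $p$ even and $c_p:=0$ for $p$ odd, so that Lemma~\ref{calculation} reads $\tfrac1n\sum_{j=1}^n(\{j\alpha\}-\{j\beta\})^p\to c_p$. The first step is to upgrade this to a well-distributed form: for every $0\le a<b\le1$,
\[
\frac{1}{\#\{j:\,an<j\le bn\}}\sum_{an<j\le bn}(\{j\alpha\}-\{j\beta\})^p\ \longrightarrow\ c_p ,
\]
uniformly over all such $a,b$ with $b-a$ bounded below. This is immediate from the well-distribution of $(j\alpha,j\beta)$ in $\mathbb{T}^2$ recorded in Section~\ref{equidistributed sequences} together with the Riemann-integrability argument of Lemma~\ref{calculation} applied on the subinterval.

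Write $J:=j_1+\dots+j_s$ and let $S_n$ be the quantity whose $n\to\infty$ limit is sought, so that $S_n=\tfrac{1}{n^{m(k-1)}}\sum_{J\le n}\big(\tfrac{n-J}{n}\big)^{\theta-1}\prod_l\theta j_l^{m_l(k-1)-1}(\{j_l\alpha\}-\{j_l\beta\})^{m_l}$. The key step is to decouple the oscillating factors $(\{j_l\alpha\}-\{j_l\beta\})^{m_l}$ from the slowly varying weight $\big(\tfrac{n}{n-J}\big)^{1-\theta}\prod_l(j_l/n)^{m_l(k-1)-1}$, whose relevant exponents are all $\ge0$ so that it extends continuously to $\overline\Delta$, $\Delta:=\{x\in[0,1]^s:\sum_l x_l\le1\}$. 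Fix a small $\eta>0$, partition $\Delta$ into cubes of a mesh-$\eta$ grid, and on each cube at distance $\ge\eta$ from the face $\{\sum_l x_l=1\}$ replace the weight by its value at the cube centre (relative error $o_\eta(1)$, uniformly in $n$) and use the previous paragraph to replace the block average of $\prod_l(\{j_l\alpha\}-\{j_l\beta\})^{m_l}$ by $\prod_l c_{m_l}$ as $n\to\infty$. The contribution of the cubes meeting the boundary layer $\{n-J<\eta n\}$ is, after the routine lattice-point count $\#\{(j_l):J=\nu\}=O(\nu^{s-1})$, at most a constant times $\tfrac1n\sum_{1\le v\le \eta n}(n/v)^{1-\theta}=O(\eta^{\theta})$, uniformly in $n$, for \emph{both} the oscillating sum and the one obtained by deleting the oscillating factors; here I also use $\prod_{i=0}^{J-1}\tfrac{n-i}{\theta+n-i-1}\le A(\theta)\big(\tfrac{\theta+n-1}{\theta+n-J}\big)^{1-\theta}$ from \eqref{thetabound}. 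Letting $n\to\infty$ cube by cube and then $\eta\to0$ therefore gives
\[
\lim_{n\to\infty}S_n=\Big(\prod_{l=1}^s c_{m_l}\Big)\;\lim_{n\to\infty}\frac{\theta^s}{n^{m(k-1)}}\sum_{J\le n}\Big(\frac{n-J}{n}\Big)^{\theta-1}\prod_{l=1}^s j_l^{m_l(k-1)-1},
\]
provided the last limit exists. If some $m_l$ is odd then $c_{m_l}=0$ and $\lim_n S_n=0$, which is the second clause of the lemma; assume from now on that every $m_l$ is even.

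The remaining non-oscillating limit is exactly of the form handled by Lemma~\ref{Yndeltaboundlemma}: since $\big(\tfrac{n-J}{n}\big)^{\theta-1}=(1-J/n)^{-(1-\theta)}$ is precisely the asymptotic form of $\prod_{i=0}^{J-1}\tfrac{n-i}{\theta+n-i-1}$ appearing in that proof (via \eqref{thetasim}), Lemma~\ref{Yndeltaboundlemma} with $r_1=\dots=r_s=1$ (so $r=s$ and $n^{m(k-1)+s-r}=n^{m(k-1)}$) yields
\[
\lim_{n\to\infty}\frac{\theta^s}{n^{m(k-1)}}\sum_{J\le n}\Big(\frac{n-J}{n}\Big)^{\theta-1}\prod_{l=1}^s j_l^{m_l(k-1)-1}=\theta^s\!\!\int\limits_{\sum_l x_l\le1}\ \prod_{l=1}^s x_l^{m_l(k-1)-1}\Big(1-\sum_l x_l\Big)^{\theta-1}\,dx_1\cdots dx_s .
\]
This is a Dirichlet integral with shape parameters $a_l=m_l(k-1)\ (\ge1)$ and $b=\theta\ (>0)$; it converges and equals $\dfrac{\Gamma(\theta)\prod_l\Gamma(m_l(k-1))}{\Gamma\big(\theta+\sum_l m_l(k-1)\big)}=\dfrac{\Gamma(\theta)\prod_l\Gamma(m_l(k-1))}{\Gamma(m(k-1)+\theta)}$. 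Multiplying by $\theta^s\prod_l c_{m_l}=\prod_l\tfrac{2\theta}{(m_l+1)(m_l+2)}$ gives
\[
\lim_{n\to\infty}S_n=\frac{\Gamma(\theta)}{\Gamma(m(k-1)+\theta)}\prod_{l=1}^s\frac{2\theta\,\Gamma(m_l(k-1))}{(m_l+1)(m_l+2)} ,
\]
which is the assertion. The genuinely delicate part is the decoupling of the second paragraph: it requires the \emph{uniform} block-average convergence (well-distribution, not merely equidistribution) so that the limit in $n$ can be taken cube by cube, and it requires the uniform-in-$n$ control $O(\eta^{\theta})$ of the boundary layer near $\{\sum_l x_l=1\}$, supplied by \eqref{thetabound}. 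A more computational route, mirroring the proof of Lemma~\ref{s=1}, is to strip the oscillating factors off one index at a time by repeated application of Lemma~\ref{szego} with $f(n)=n$, $g_\delta(n)=\delta n$ and weights obtained by Abel summation against the factor $\theta j_l^{m_l(k-1)-1}(n-J)^{\theta-1}$, which is piecewise monotone in each $j_l$; this replaces the multidimensional Riemann-sum bookkeeping by the bookkeeping of nested summation ranges.
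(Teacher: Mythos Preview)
Your plan is correct and uses the same ingredients as the paper---well-distribution of $(j\alpha,j\beta)$, a cube partition, and the Dirichlet integral---but organizes them differently. You do a direct Riemann-sum decoupling: freeze the smooth weight on mesh-$\eta$ cubes, average the oscillating product on each cube via well-distribution, and control the singular face $\{\sum_l x_l=1\}$ by an explicit $O(\eta^\theta)$ boundary-layer estimate, then read off the non-oscillating limit as a Dirichlet integral via Lemma~\ref{Yndeltaboundlemma}. The paper instead orders the lattice points $(j_1,\dots,j_s)\in\mathfrak{X}_n$ by the magnitude of the weight $b_{n,i}=(1-J/n)^{\theta-1}\prod_l\theta(j_l/n)^{m_l(k-1)-1}$ and applies the Abel-summation Lemma~\ref{szego} once to the resulting monotone sequence; the price is a separate discrepancy statement (Lemma~\ref{tupledisc}) showing that the sublevel sets $\mathfrak{X}_n^i$ are asymptotically equidistributed under $\phi(j_1,\dots,j_s)=(j_1\alpha,j_1\beta,\dots,j_s\alpha,j_s\beta)$, and that lemma is itself proved by a cube partition and well-distribution. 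So your route is the more elementary one and avoids the level-set ordering altogether, while the paper's packaging via Lemma~\ref{szego} makes the boundary control implicit in the condition $\lim_{\delta\to0}\limsup_n\sum_{i\le g_\delta(n)}|p_{ni}|=0$. One wording slip in your plan: the weight does \emph{not} extend continuously to $\overline\Delta$ when $\theta<1$ (the factor $(1-\sum_l x_l)^{\theta-1}$ blows up), and on cubes at distance exactly $\eta$ from the singular face the \emph{relative} error in the weight is $O(1)$, not $o_\eta(1)$; but the absolute error summed over those near-boundary cubes is still $O(\eta^\theta)$ by the same estimate you give for the boundary layer itself, so the argument stands.
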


\begin{proof}
Define the set of integer lattice points \begin{equation} \mathfrak{X}_n = \{ (j_1,...,j_s) \in [n]^s:  \sum_{l=1}^s j_l \le n \} \end{equation} and set $\displaystyle{f(n) = |\mathfrak{X}_n|}$.  Let $b_{n, i}$ denote the elements of the multiset \[\bigg\{  \left(\frac{n-(j_1+...+j_s)}{n} \right)^{\theta-1} \prod_{l=1}^s \big( \theta j_l^{m_l(k-1) - 1} \big) : (j_1,...,j_s) \in \mathfrak{X}_n \bigg\} \] listed in increasing order.  This induces a (not necessarily unique) ordering on the underlying set $\mathfrak{X}_n$.  Let $\mathfrak{X}_n^i$ be the set consisting of the first $i$ elements of $\mathfrak{X}_n$ under this ordering. 

Following the proof of Lemma \ref{s=1}, we define the arrays as follows.  
Set \begin{equation*}\label{sn} s_{n i} = \frac{1}{i}\sum_{(j_1,...,j_s) \in \mathfrak{X}_n^i} \prod_{l=1}^s (\{j_l\alpha \} - \{j_l\beta \})^{m_l} 
\end{equation*} and \begin{equation*} t_n = \frac{1}{\sum_{i=1}^{f(n)} b_{n,i}} \sum_{(j_1,...,j_s) \in \mathfrak{X}_n}  \left(\frac{n-(j_1+...+j_s)}{n} \right)^{\theta-1} \prod_{l=1}^s \big( \theta j_l^{m_l(k-1) - 1} \big) (\{j_l\alpha \} - \{j_l\beta \})^{m_l} \end{equation*}

Then $\displaystyle{t_n = \sum_{i=1}^{f(n)} p_{ni} s_{ni}}$ where $\displaystyle{p_{n i} = \frac{i(b_{n, i} - b_{n, i+1})}{\sum_{j=1}^{f(n)} b_{n, j} }}$ for $i < f(n)$ and $\displaystyle{p_{n, f(n)} = \frac{f(n) b_{n, f(n)}}{\sum_{j=1}^{f(n)} b_{n, j} }}$.   Clearly $\displaystyle{\sum_{i=1}^{f(n)} p_{ni} = 1}$ and since $b_{n,i}$ is a nondecreasing sequence, $\displaystyle{\sum_{i=1}^{f(n)} |p_{n i}|}$ will be bounded.   

By Riemann integral comparison, 
\begin{align} 
\lim_{n \to \infty}\frac{1}{n^{m(k-1)}} \sum_{j=1}^{f(n)} b_{n, j} &\nonumber= \lim_{n \to \infty} \sum_{\sum_{l=1}^s j_l \le n} \frac{\theta^s}{n^s} \big(1 - (j_1+...+j_s)/n\big)^{\theta-1} \prod_{l=1}^s \left(\frac{j_l}{n}\right)^{m_l(k-1)-1} \\ &\nonumber= \int \limits_{\sum_{l=1}^s x_l \le 1} \theta^s (1- x_1-...-x_s)^{\theta-1} \prod_{l=1}^s x_l^{m_l(k-1)-1} dx_l \\ &= \label{mathematica} \theta^s \frac{\Gamma(\theta)}{\Gamma(m(k-1)+\theta)} \prod_{l=1}^s \Gamma(m_l(k-1)) 
\end{align}
where the last equality represents the normalizing constant for the Dirichlet distribution with parameters $m_1(k-1),...,m_s(k-1), \theta > 0$.
For each $\delta > 0$, define the set \begin{equation} \label{Yg} \mathfrak{Y}_{\delta, n} = \Big\{ (j_1,...,j_s) \in \mathfrak{X}_n:  \left(\frac{n-(j_1+...+j_s)}{n} \right)^{\theta-1} \prod_{l=1}^s \big( \theta j_l^{m_l(k-1) - 1} \big) \le \delta n^{m(k-1)-s} \Big\} 
\end{equation} and let $g_\delta(n) = |\mathfrak{Y}_{\delta, n}|$.  
Note that by scaling $\mathfrak{Y}_{\delta, n}$ by a factor of $n$, the asymptotics of $g_\delta(n)$ can also be computed via comparison to an integral (volume approximation).
\begin{equation} \label{gasymp} \lim_{n \to \infty} \frac{g_\delta(n)}{n^s} = \int \limits_{\substack{ \sum x_l \le 1 \cr (1-\sum x_l)^{\theta-1} \prod \theta x_l^{m_l(k-1)-1} \le \delta } } dx_1...dx_s
\end{equation}
Then 
\begin{equation*} \begin{aligned}\lim_{\delta \to 0} \limsup_{n \to \infty} \frac{1}{n^{m(k-1)}} \sum_{i=1}^{g_\delta(n)} |i(b_{n, i} - b_{n, i+1})| &\le \lim_{ \delta \to 0} \limsup_{n \to \infty} \frac{2}{n^{m(k-1)}} g_\delta(n) \delta n^{m(k-1)-s}  \\ &= \lim_{\delta \to 0} 2 \delta \int \limits_{\substack{ \sum x_l \le 1 \cr (1-\sum x_l)^{\theta-1} \prod \theta x_l^{m_l(k-1)-1} \le \delta } } dx_1...dx_s = 0 
\end{aligned}
\end{equation*}

It remains to show that for each $\delta > 0$, there exists some limiting value $L$ such that \[\lim_{n \to \infty} \max_{i > g_\delta(n)} |s_{n i} - L| = 0\]  Define the mapping $\phi:[n]^s \to \mathbb{T}^{2s}$ given by \begin{equation} \phi(j_1,...,j_s) = (j_1 \alpha, j_1 \beta ,...,j_s \alpha , j_s \beta  )
\end{equation}  
Lemma \ref{tupledisc} below shows that $\displaystyle{ \lim_{n \to \infty} \max_{i > g_\delta(n)} |D(\phi(\mathfrak{X}_n^i))| = 0 }.$  This means that the condition on the array $s_{ni}$ is satisfied with $\displaystyle{ L := \int_0^1...\int_0^1 \prod_{l=1}^s (x_l - y_l)^{m_l}dx_ldy_l }$.  By Lemma \ref{calculation}, $L = \newline \displaystyle{\prod_{l=1}^s \frac{2}{(m_l+1)(m_l+2)}}$ if all the exponents $m_l$ are even and $L = 0$ otherwise.  The result follows by combining this with \eqref{mathematica}.
    
\end{proof}

To finish the proof of Lemma \ref{generalmainlemma}, we show:

\begin{lemma}\label{tupledisc}

Following the notation from the proof of Lemma \ref{generalmainlemma}, for each $\delta > 0$, \begin{equation}\label{lemmadisc} \lim_{n \to \infty} \max_{i > g_\delta(n)} |D(\phi(\mathfrak{X}_n^i))| = 0 \end{equation}  
\end{lemma}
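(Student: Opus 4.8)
The plan is to estimate the multidimensional discrepancy $D(\phi(\mathfrak X_n^i))$ via the Erdős--Turán--Koksma inequality (see \cite{kuipers}), which bounds it by $O\!\bigl(\tfrac{1}{H+1}\bigr)$ plus a finite sum over nonzero $\mathbf h=(h_1,\dots,h_{2s})\in\mathbb Z^{2s}$ with $\|\mathbf h\|_\infty\le H$ of $r(\mathbf h)^{-1}\bigl|\tfrac1i\sum_{\mathbf j\in\mathfrak X_n^i}e^{2\pi i\,\mathbf h\cdot\phi(\mathbf j)}\bigr|$, where $r(\mathbf h)=\prod_t\max(1,|h_t|)$. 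It then suffices to show that for each fixed $\mathbf h\ne0$ the averaged exponential sum $\tfrac1i\bigl|\sum_{\mathbf j\in\mathfrak X_n^i}e^{2\pi i\,\mathbf h\cdot\phi(\mathbf j)}\bigr|$ tends to $0$ as $n\to\infty$ uniformly over $i>g_\delta(n)$, after which we let $n\to\infty$ and then $H\to\infty$. (For partitions with a part of size one the limiting statement \eqref{lemmadisc} is needed only for the full set $\mathfrak X_n$, the general $i$ being absorbed by the cruder bound used for size-one parts when $k=1$; so below one may assume every $a_l:=m_l(k-1)-1$ is $\ge1$.)

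Write $\mathbf j=(j_1,\dots,j_s)$ and let $w_n(\mathbf j)=\bigl(1-\tfrac1n\sum_l j_l\bigr)^{\theta-1}\prod_{l=1}^s\theta\,j_l^{a_l}$ be the weight whose ordered values are the $b_{n,i}$; under $\mathbf j=n\mathbf x$ one has $w_n(\mathbf j)=n^{\sum_l a_l}w_\infty(\mathbf x)$ with $w_\infty$ as in \eqref{Yg}. Three observations drive the bound. First, $\mathfrak X_n^i$ is squeezed between the sublevel sets $\{\mathbf j\in\mathfrak X_n:w_n(\mathbf j)<c_i\}$ and $\{\mathbf j\in\mathfrak X_n:w_n(\mathbf j)\le c_i\}$ of the \emph{one} function $w_n$, where $c_i$ is the $i$-th smallest value of $w_n$. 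Second, $i>g_\delta(n)$ forces $\mathfrak X_n^i\supseteq\mathfrak Y_{\delta,n}$, hence $i\ge g_\delta(n)$; by \eqref{gasymp} the constant $c_\delta:=\lim_n g_\delta(n)/n^s$ equals the volume of $\{w_\infty\le\delta\}$ inside the simplex, which is strictly positive (it contains a neighborhood of every face $\{x_l=0\}$ since $a_l\ge1$), so $i/n^{s-1}\to\infty$ uniformly over $i>g_\delta(n)$. Third, with all coordinates but $j_{l_0}$ frozen, $j_{l_0}\mapsto w_n(\mathbf j)$ is a product of $j_{l_0}^{a_{l_0}}$ with a power of an affine function of $j_{l_0}$, hence unimodal; so each sublevel set of $w_n$ meets such a fibre in at most two integer intervals.

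Now fix $\mathbf h\ne0$, choose $l_0$ with $(h_{2l_0-1},h_{2l_0})\ne(0,0)$, and write $\mathbf h\cdot\phi(\mathbf j)=\sum_{l=1}^s j_l\gamma_l$ with $\gamma_l=h_{2l-1}\alpha+h_{2l}\beta$. The irrationality and $\mathbb Q$-linear independence hypotheses on $\alpha,\beta$ give $\gamma_{l_0}\notin\mathbb Z$, so $\|\gamma_{l_0}\|:=\operatorname{dist}(\gamma_{l_0},\mathbb Z)>0$. Carrying out the summation over $j_{l_0}$ first and using the geometric-sum estimate $\bigl|\sum_{j\in I}e^{2\pi i j\gamma_{l_0}}\bigr|\le\tfrac{1}{2\|\gamma_{l_0}\|}$ for any interval $I$, together with the third observation, we obtain
\[
\Bigl|\sum_{\mathbf j\in\mathfrak X_n^i}e^{2\pi i\,\mathbf h\cdot\phi(\mathbf j)}\Bigr|\le\frac{1}{\|\gamma_{l_0}\|}\,n^{s-1},
\]
uniformly in $i$. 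Dividing by $i$ and invoking $i/n^{s-1}\to\infty$ shows each term of the Erdős--Turán--Koksma bound goes to $0$ uniformly over $i>g_\delta(n)$; summing over the finitely many $\mathbf h$ with $\|\mathbf h\|_\infty\le H$ and sending $H\to\infty$ yields \eqref{lemmadisc}.

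The crux is the third observation and its use in the innermost summation. The set $\mathfrak X_n^i$ is neither a box nor exactly a sublevel set — the tie-breaking in its definition only sandwiches it between two sublevel sets — but because those are sublevel sets of a single fixed, coordinatewise-unimodal function, the exponential sum over $\mathfrak X_n^i$ stays at the boundary-layer order $O(n^{s-1})$, which is negligible against the $\asymp n^s$ points of $\mathfrak X_n^i$ guaranteed by $i>g_\delta(n)$.
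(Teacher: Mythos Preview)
Your argument is correct and follows a genuinely different route from the paper's. The paper proceeds geometrically: it partitions $\mathfrak{Y}_{\varepsilon,n}$ into cubes of side length $A$, invokes the well-distribution of $(j\alpha,j\beta)$ to make the discrepancy on each full cube uniformly small, shows via a volume-approximation argument that the boundary cubes carry a vanishing fraction of the points, and then lets $A\to\infty$. You instead pass through the Erd\H{o}s--Tur\'an--Koksma inequality and bound each Weyl sum directly, the structural input being that $w_n$ is coordinatewise unimodal, so that $\mathfrak{X}_n^i$ meets every $j_{l_0}$-fiber in at most two integer intervals (plus at most two tie-breaking points from the level set); the inner geometric sum is then $O_{\mathbf h}(1)$ per fiber, the full sum is $O_{\mathbf h}(n^{s-1})$, and this is beaten by $i>g_\delta(n)\asymp c_\delta n^s$.

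Your route is arguably more direct and, notably, handles the tie-breaking in the definition of $\mathfrak{X}_n^i$ explicitly: the paper's identification of $\{\mathfrak{X}_n^i:i>g_\delta(n)\}$ with $\{\mathfrak{Y}_{\varepsilon,n}:\varepsilon\ge\delta\}$ silently assumes away ties, whereas your unimodality observation pins down that each fiber meets the critical level set in at most two points. The one place where your write-up is thin is the parenthetical dismissing $a_l=0$. The genuine obstruction there is not the fiber structure but rather $c_\delta$: when every $a_l=0$ and $\theta\le1$ (i.e.\ $k=2$ with all $m_l=1$) one has $\inf w_\infty\ge\theta^s$, hence $c_\delta=0$ for small $\delta$, $g_\delta(n)=0$, and the required $i/n^{s-1}\to\infty$ fails. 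In that degenerate regime the lemma as stated is actually false (take $i=1$) and the paper's proof is equally affected; the application in Lemma~\ref{generalmainlemma} survives only because $p_{ni}$ then concentrates near $i=f(n)$. So this is not a defect particular to your method.
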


\begin{proof}
For any $A > 0$, the lattice $(A \mathbb{Z})^s$ defines a partition of $\mathbb{R}_+^s$ into $s$-dimensional cubes $C_{i,A}$ of side length $A$.  Define the cubes so that the boundaries do not overlap and order them according to distance from the origin to the center of the cube.  For each $\varepsilon > 0$ (and $A > 1$), this then induces a partition $\displaystyle{ \mathfrak{Y}_{\varepsilon, n} = \bigcup_{i} (C_{i,A} \cap \mathfrak{Y}_{\varepsilon, n} ) }$ where $\mathfrak{Y}_{\varepsilon, n}$ is defined in \eqref{Yg}.  Given $\delta > 0$, we need to show that $D(\phi(\mathfrak{Y}_{\varepsilon, n})) \to 0$ as $n \to \infty$ uniformly over $\varepsilon \ge \delta$.  

Since $(j \alpha, j \beta)$ is well-distributed, the discrepancies over the cubes tend to 0 uniformly as the side length approaches infinity, i.e. $\displaystyle{ \lim_{A \to \infty} \sup_i D(\phi(C_{i,A} \cap \mathbb{Z}^s)) = 0 }$.  It is not hard to see (e.g. \cite[Thm 2.6]{kuipers}) that \begin{equation}\label{di} D(\phi(\mathfrak{Y}_{\varepsilon, n})) \le  \sum_i \frac{|C_{i,A} \cap \mathfrak{Y}_{\varepsilon, n}|}{ |\mathfrak{Y}_{\varepsilon, n}| } D(\phi(C_{i,A} \cap \mathfrak{Y}_{\varepsilon, n})) 
\end{equation}   
Define the set of indices $S_{A, \varepsilon, n}$ such that $(C_{i,A} \cap \mathbb{Z}^s) \neq (C_{i,A} \cap \mathfrak{Y}_{\varepsilon, n})$ for $i \in S_{A, \varepsilon, n}$.  In words, $\{C_{i,A} : i \in S_{A, \varepsilon, n} \}$ is the set of cubes on the boundary of $\mathfrak{Y}_{\varepsilon, n}$.  Then by \eqref{di}, it suffices to show that for each $A > 0$, \begin{equation}\label{boundary} \sum_{i \in S_{A, \varepsilon, n}} \frac{|C_{i,A} \cap \mathfrak{Y}_{\varepsilon, n}|}{ |\mathfrak{Y}_{\varepsilon, n}| } \to  0 \end{equation} as $n \to \infty$ uniformly over $\varepsilon \ge \delta$.  Shrink $\mathbb{Z}^s$ by a factor of $n$, which induces a corresponding scaling of the subsets $\mathfrak{Y}_{\varepsilon, n}$ and $C_{i,A} \cap \mathfrak{Y}_{\varepsilon, n}$.  Then as in \eqref{gasymp}, \eqref{boundary} follows by volume approximation.  The discrepancy bound \eqref{lemmadisc} follows from \eqref{di} by taking $A \to \infty$. 
\end{proof}

Putting this together, we see that for even $m$,

\begin{align}
\mathbb{E}[\big(Y_{\infty, k}\big)^m] &\nonumber= \lim_{n \to \infty} \mathbb{E}[\big(Y_{n,k} \big)^m] \\ &\nonumber= \sum_{\substack{(m_i) \vdash m \\ m_i \text{ even}}} \binom{m}{ m_1,...,m_s} \frac{1}{\prod_{l = 1}^m c_l!} \frac{\Gamma(\theta)}{\Gamma(m(k-1)+\theta)} \prod_{l=1}^s \frac{2 \theta \Gamma(m_l(k-1))}{(m_l+1)(m_l+2)} \\
&= \label{momenttts} \frac{\Gamma(\theta)}{\Gamma(m(k-1)+\theta)} \sum_{\substack{(m_i) \vdash m \\ m_i \text{ even}}} \frac{m!}{\prod_{l = 1}^m c_l!}\prod_{l=1}^s \frac{2 \theta \Gamma(m_l(k-1))}{(m_l+2)!} 
\end{align} 

and $\displaystyle{\mathbb{E}[\big(Y_{\infty, k}\big)^m] = 0}$ for odd $m$.

\begin{definition}
The partial Bell polynomials are given by \[B_{n,k}(x_1,...,x_{n-k+1}) = \sum \frac{n!}{j_1!...j_{n-k+1}!} \left(\frac{x_1}{1!} \right)^{j_1}...\left(\frac{x_{n-k+1}}{(n-k+1)!} \right)^{j_{n-k+1}}\]
where the sum is taken over all sequences $j_1,...,j_{n-k+1}$ of non-negative integers such that $j_1+...+j_{n-k+1} = k$ and $j_1+2j_2+...+(n-k+1) j_{n-k+1} = n$.  Then the complete Bell polynomials are defined by $\displaystyle{B_n(x_1,...,x_n) = \sum_{k=1}^n B_{n,k}(x_1,...,x_{n-k+1})}$.  
\end{definition}
The Bell polynomials satisfy the exponential formula: \begin{equation} \exp \left( \sum_{n=1}^\infty \frac{a_n}{n!} x^n \right) = \sum_{n=0}^\infty \frac{B_n(a_1,...,a_n)}{n!} x^n 
\end{equation}

Using this formula, one sees that as formal power series, \begin{equation*} \sum_{m = 0}^\infty \mathbb{E}[(Y_{\infty, k})^m] \frac{\Gamma(m(k-1)+\theta)}{\Gamma(\theta) m!} z^m = \exp(K(z))
\end{equation*} where $\displaystyle{K(z) = \sum_{m=1}^\infty \kappa_{2m} z^{2m}}$ and $\displaystyle{\kappa_{2m} = \frac{2\theta\Gamma(2m(k-1))}{(2m+2)!} }$.  This completes the proof of Theorem \ref{momentsthm}.

\section{Eigenvalue density when $k = 2$ and $\theta=1$} \label{k=2}

We will now specialize to the case $k = 2$ and $\theta=1$.  Then $\displaystyle{\kappa_{2m} = \frac{2}{2m(2m+1)(2m+2)}}$ and $K(z)$ has radius of convergence 1.  Let us determine a closed form for $K(z)$ in this region.  Note that $\displaystyle{(z^2 K(z))''' = \frac{2z}{1-z^2}}$.  Solving this differential equation, we find that \begin{equation} K(z) = \frac{3}{2} - \frac{1}{2}\left(1 - \frac{1}{z}\right)^2 \log(1 - z) - \frac{1}{2}\left(1 + \frac{1}{z}\right)^2 \log(1 + z) \end{equation}
which is well defined for $|z| < 1$.  By taking the branch cut of $\log(1-z)$ to be $[1, \infty)$ and the branch cut of $\log(1+z)$ to be $(-\infty, -1]$, we see that $K(z)$ can be extended analytically to $\mathbb{C} \setminus \{ (-\infty, -1] \cup [1, \infty) \}$.  To extract the density, we use the Stieltjes transform and the associated inversion formula.      

\begin{definition}
For a probability measure $\mu$, the Stieltjes transform is given by \[G_\mu(z) = \int_{\mathbb{R}} \frac{1}{z-t } \mu(dt)\]  It is well-defined on $\displaystyle{\mathbb{C} \setminus \text{support}( \mu)}$.  The Stieltjes inversion formula states \[ d\mu(x) = \lim_{\varepsilon \to 0^+} \frac{G_\mu(x - i \varepsilon) - G_\mu(x + i \varepsilon)}{2 i \pi}\]  In particular, if this limit exists for all $x$ in the support of $\mu$, the formula gives the continuous density function $\rho$ of $\mu$.
\end{definition}

As formal power series, the Stieltjes transform $\displaystyle{G(z) := \mathbb{E}\bigg[\frac{1}{z - Y_{\infty, 2}}\bigg]}$ equals $\displaystyle{\frac{1}{z} \exp(K(1/z))}$.  This must also be the asymptotic series expansion of $G(z)$ as $z \to \infty$ since it is determined uniquely.  Thus, we have the equality $\displaystyle{G(z) = \frac{1}{z} \exp(K(1/z))}$ as analytic functions for $|z| > 1$.  By the uniqueness of analytic continuation, this equality holds true in fact for $z \in \mathbb{C} \setminus [-1, 1]$.  

Using the inversion formula on the random variable $Y_{\infty, 2}$, we get for $-1 < t < 0$, 
\begin{align*}
& \lim_{\epsilon \to 0^+} \Im G(t + i \epsilon) \\
= & \frac{1}{t} \exp\left(\frac{3}{2} - \frac{1}{2} (1 - t)^2 \log\Big(1 - \frac{1}{t}\Big) \right) 
\lim_{\epsilon \to 0^+} \Im \exp\left(-\frac{1}{2} (1+t)^2 \log \Big(1 + \frac{1}{t + i \epsilon}\Big)\right) \\
= & \frac{1}{t} \exp\left(\frac{3}{2} - \frac{1}{2} (1 - t)^2 \log \Big(1 - \frac{1}{t} \Big)\right) \Im \exp\left(-\frac{1}{2} (1+t)^2 \Big(\log \Big(\frac{1}{|t|} - 1 \Big) - i \pi \Big)\right) \\
= & \frac{1}{t} \exp\left(\frac{3}{2} - \frac{1}{2} (1 - t)^2 \log \Big(1 - \frac{1}{t} \Big) -\frac{1}{2} (1+t)^2 \log \Big(\frac{1}{|t|} - 1 \Big)\right)\sin \bigg(\frac{(1+t)^2}{2} \pi \bigg)
\end{align*}
Thus, by symmetry (since all odd moments of $Y_{\infty, 2}$ are zero) the density is
\begin{align}
&\nonumber p_{Y_{\infty, 2}}(t) \\
= & -\frac{1}{\pi} \lim_{\epsilon \to 0^+} \Im G(-|t| + i \epsilon) \nonumber \\
= &\frac{1}{\pi |t|} \exp\bigg(\frac{3}{2} - \frac{1}{2} (1 + |t|)^2 \log\Big(1 + \frac{1}{|t|}\Big) -\frac{1}{2} (1-|t|)^2 \log\Big(\frac{1}{|t|} - 1 \Big)\bigg) \sin \bigg(\frac{(1-|t|)^2}{2} \pi \bigg) \nonumber \\
= & \frac{e^{3/2}}{\pi |t|} \left(\frac{1}{|t|} - 1\right)^{-\frac{1}{2} (1 - |t|)^2} \left(\frac{1}{|t|} + 1\right)^{-\frac{1}{2} (1 + |t|)^2} \sin \bigg(\frac{(1-|t|)^2}{2} \pi \bigg) 
\end{align}
\vspace{12pt}
for $-1 \le t \le 1$ which proves Corollary \ref{twocor}.  
 
\begin{figure}
\centering
\includegraphics[width=70mm]{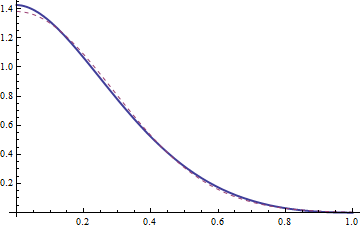}
\caption{solid line = $\displaystyle{p_{Y_{\infty, 2}}(t)}$, dotted line = normal density}
 \label{densityfigure}
\end{figure} 
 
Figure \ref{densityfigure} shows the graphs of $\displaystyle{p_{Y_{\infty, 2}}(t)}$ and a normal density with mean 0 and variance $1/12$.  It is striking how similar the two densities look. (However, the Gaussian density is of course not compactly supported.) 
  
\begin{remark} 
Let $\alpha_1, \alpha_2, \beta_1, \beta_2$ be irrational numbers linearly independent over $\mathbb{Q}$ and $I_1 = (\alpha_1, \beta_1)$ and $I_2 = (\alpha_2, \beta_2)$ be two intervals.  Wieand \cite{wieand} showed that for the defining representation on $\mathfrak{S}_n$ with uniform measure, the normalized eigenvalue counts $Z^{I_1}_n$ and $Z^{I_2}_n$ converge in distribution to independent normal random variables.  This is because $\lim_{n \to \infty} Cov(Z^{I_1}_n, Z^{I_2}_n) = 0$ and the multivariate normal distributions are determined by their covariance structure.  

However, by computing cross moments, one sees that for the $k=2$ representations on $\mathfrak{S}_n$ with uniform measure, $Y^{I_1}_{n,2}$ and $Y^{I_2}_{n,2}$ do not converge to independent random variables.  For example, a little calculation shows that 
\begin{align}
\lim_{n \to \infty} \mathbb{E}[(Y^{I_1}_{n,2})^2] \mathbb{E}[(Y^{I_2}_{n,2})^2] &= \frac{2}{2 \cdot 3 \cdot 4} \frac{2}{2 \cdot 3 \cdot 4} \\
\lim_{n \to \infty} \mathbb{E}[(Y^{I_1}_{n,2})^2 (Y^{I_2}_{n,2})^2] &= \frac{1}{4} \frac{2}{3 \cdot 4} \frac{2}{3 \cdot 4} + \frac{1! 1!}{4!}\frac{2}{3 \cdot 4}\frac{2}{3 \cdot 4}
\end{align}

Thus, unlike the $k = 1$ case, the squares of the random variables $Y^{I_1}_{n,2}$ and $Y^{I_2}_{n,2}$ are positively correlated in the limit.  
\end{remark}

\section{General linear combinations of cycle lengths} \label{generalf}

We now put our results in the context of the prior work of Ben Arous and Dang \cite{arousdang}.  Let $(u_j)_{j \ge 1}$ be a sequence of real numbers and let the random variable $\displaystyle{ X_n^{(u_j)} := \sum_{j=1}^n u_j C_j^{(n)} }$ be the associated linear combination of cycle lengths.  Ben Arous and Dang obtain two different limiting laws for $X_n^{(u_j)}$ depending on the conditions that the sequence $(u_j)_{j \ge 1}$  satisfies.  Theorem \ref{smooth} below contains parts (1) and (2) of Theorem 2.3 in \cite{arousdang}.  Theorem \ref{rough} below is part (1) of Theorem 2.4 in \cite{arousdang}.  See Definition 2.1 in \cite{arousdang} for the definition of convergence in the Cesaro $(C,\theta)$ sense.

\begin{theorem} \label{smooth}
Let $\theta > 0$ and assume that $\displaystyle{\sum_{j=1}^\infty \frac{u_j^2}{j} \in (0, \infty) }$.  If $0 < \theta < 1$, assume additionally that the sequence $(|u_j|)_{j \ge 1}$ converges to zero in the Cesaro $(C, \theta)$ sense.  Then under the Ewens distribution with parameter $\theta$, $\displaystyle{X_n^{(u_j)} - \mathbb{E}[X_n^{(u_j)}] } $ converges weakly as $n \to \infty$ to a non-Gaussian infinitely divisible distribution defined by its Fourier transform \[\phi(t) = \exp \left(\theta \int(e^{itx} - 1 - itx) dM_f(x) \right) \] where the L\'{e}vy measure $M_f$ is given by $\displaystyle{ M_f = \sum_{j=1}^\infty \frac{1}{j} \delta_{u_j} }$.  
\end{theorem}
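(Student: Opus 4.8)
The natural route, the one used by Wieand \cite{wieand} and by Ben Arous and Dang \cite{arousdang}, is to transfer the problem to a sum of \emph{independent} Poisson variables via the Feller coupling \cite{feller}. \textbf{Step 1 (Poissonization).} Realize $\mathfrak{S}_n$ under the Ewens$(\theta)$ measure on a common probability space with independent Bernoulli variables $\xi_i$, $i \ge 1$, where $\mathbb{P}(\xi_i = 1) = \theta/(\theta+i-1)$, so that $C_j^{(n)}$ counts the $j$-spacings of the finite word $\xi_1 \cdots \xi_n$ (suitably completed) while the independent Poisson$(\theta/j)$ variables $W_j$ count the $j$-spacings of the full infinite sequence $\xi_1\xi_2\cdots$. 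Setting $\widetilde X_n := \sum_{j=1}^n u_j W_j$, the aim of this step is to show that $\big(X_n^{(u_j)} - \mathbb{E}[X_n^{(u_j)}]\big) - \big(\widetilde X_n - \mathbb{E}[\widetilde X_n]\big) = \sum_{j=1}^n u_j(C_j^{(n)} - W_j) - \mathbb{E}\big[\sum_{j=1}^n u_j(C_j^{(n)} - W_j)\big]$ tends to $0$ in probability. The Feller coupling reduces this to bounding the contributions of the ``completing'' cycle (whose length is comparable to $n$) and of the first spacing beyond position $n$: it is here that $\sum_j u_j^2/j < \infty$ is used when $\theta \ge 1$, and the Ces\`{a}ro $(C,\theta)$ hypothesis on $(|u_j|)$ is used when $0 < \theta < 1$.

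\textbf{Step 2 (the Poisson characteristic function).} It then suffices to find the weak limit of $\widetilde X_n - \mathbb{E}[\widetilde X_n] = \sum_{j=1}^n u_j(W_j - \theta/j)$. By independence and $\mathbb{E}[e^{itu_jW_j}] = \exp\!\big(\tfrac{\theta}{j}(e^{itu_j}-1)\big)$,
\[
\mathbb{E}\big[e^{\,it(\widetilde X_n - \mathbb{E}[\widetilde X_n])}\big] = \exp\!\Big(\sum_{j=1}^n \tfrac{\theta}{j}\big(e^{itu_j}-1-itu_j\big)\Big).
\]
Since $|e^{ix}-1-ix| \le x^2/2$, the assumption $\sum_j u_j^2/j < \infty$ makes the exponent converge absolutely as $n \to \infty$ to $\theta\int(e^{itx}-1-itx)\,dM_f(x)$ with $M_f = \sum_{j\ge 1}\tfrac1j\delta_{u_j}$. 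By L\'{e}vy's continuity theorem, $\widetilde X_n - \mathbb{E}[\widetilde X_n]$ --- and hence, by Step 1, also $X_n^{(u_j)} - \mathbb{E}[X_n^{(u_j)}]$ --- converges weakly to the infinitely divisible law with the asserted Fourier transform, which is non-Gaussian precisely because the L\'{e}vy measure $\theta M_f$ is nonzero.

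\textbf{The main obstacle.} Step 2 is a short computation followed by the continuity theorem; the real work is Step 1, and the delicate regime is $0 < \theta < 1$. There two effects must be handled at once: $\mathbb{E}[C_j^{(n)}]$ differs from $\theta/j$ by the factor $\prod_{i=0}^{j-1}\frac{n-i}{\theta+n-i-1}$, which is \emph{not} uniformly close to $1$ for $j$ of order $n$ (cf. Lemma \ref{estimates}(2)), and the Feller coupling forces a macroscopic disagreement between $C_j^{(n)}$ and $W_j$ for the single block that completes the permutation to size $n$. Showing that the centered version of $\sum_j u_j(C_j^{(n)} - W_j)$ and the deterministic defect $\sum_j u_j(\mathbb{E}[C_j^{(n)}] - \theta/j)$ are both negligible under only the $L^2$-plus-Ces\`{a}ro hypotheses is the crux of the matter, and is carried out in detail in \cite{arousdang}.
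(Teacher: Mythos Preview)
The paper does not prove this theorem; it is quoted verbatim from \cite{arousdang} (their Theorem 2.3, parts (1) and (2)) as background for Section \ref{generalf}. The paper does, however, give a one-paragraph summary of the proof strategy immediately after stating Theorems \ref{smooth} and \ref{rough}, and that summary is exactly your two-step outline: use the Feller coupling to show that $X_n^{(u_j)}$ and $X_n^{*,(u_j)} := \sum_{j=1}^n u_j W_j$ have the same limiting behavior under the stated hypotheses, and then read off the infinitely divisible limit from the characteristic function of the Poisson sum. Your proposal is correct and coincides with both the paper's sketch and the approach in \cite{arousdang}; your identification of the $0<\theta<1$ coupling estimate as the genuine technical content is also accurate.
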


\begin{theorem} \label{rough}
Let $\theta > 0$ and assume that $\displaystyle{\sum_{j=1}^\infty \frac{u_j^2}{j} = \infty }$ and that $\max \limits_{1 \le j \le n} |u_j| = o(\eta_n)$ where $\displaystyle{ \eta_n^2 = \theta \sum_{j=1}^n \frac{u_j^2}{j} }$.  Then under the Ewens distribution with parameter $\theta$, the centered and normalized eigenvalue statistic $\displaystyle{ \frac{X_n^{(u_j)} - \mathbb{E}[X_n^{(u_j)}]}{\sqrt{\Var X_n^{(u_j)}}} }$ converges weakly as $n \to \infty$ to the standard normal $\mathcal{N}(0, 1)$.  
\end{theorem}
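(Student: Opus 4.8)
The plan is to follow the route indicated for the $k=1$ case in Section~\ref{k=1} (this is carried out in detail in \cite{arousdang}): couple the cycle counts to independent Poisson variables via the Feller coupling \cite{feller}, prove a Lindeberg central limit theorem for the resulting independent sum, and transfer the conclusion back. Throughout, write $S_n := X_n^{(u_j)}$ and $M_n := \max_{1\le j\le n}|u_j|$, and recall that $\eta_n\to\infty$ by hypothesis, while $M_n = o(\eta_n)$.

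First I would set up the Feller coupling (see also \cite[Ch.~4]{a}): realize, on one probability space, $\sigma$ with the Ewens($\theta$) law together with independent Poisson variables $W_1,W_2,\dots$, $\mathbb{E}[W_j]=\theta/j$, such that $\mathbb{E}\bigl[\sum_{j=1}^n |C_j^{(n)} - W_j|\bigr]$---and likewise its second moment---is bounded uniformly in $n$. Setting $R_n := \sum_{j=1}^n u_j\bigl(C_j^{(n)}-W_j\bigr)$, the hypothesis $M_n = o(\eta_n)$ gives $\mathbb{E}|R_n| \le M_n\,\mathbb{E}\bigl[\sum_{j=1}^n|C_j^{(n)}-W_j|\bigr] = o(\eta_n)$ and similarly $\mathbb{E}[R_n^2]=o(\eta_n^2)$, so $R_n/\eta_n\to 0$ in probability. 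This reduces matters to the independent sum $T_n := \sum_{j=1}^n u_jW_j$, for which $\Var(T_n)=\theta\sum_{j=1}^n u_j^2/j = \eta_n^2$.

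Next I would verify the Lindeberg condition for $T_n/\eta_n$. The contribution of any fixed finite block of small indices is $O(M_n^2/\eta_n^2)\to 0$; for indices beyond a cutoff $J$ with $\theta/J<1$, the truncation event $\{|u_jW_j - \theta u_j/j| > \varepsilon\eta_n\}$ forces $W_j\ge 2$ once $n$ is large (because $|u_j|\le M_n = o(\eta_n)$), and a short Poisson estimate gives $\mathbb{E}\bigl[(W_j-\theta/j)^2\mathds{1}(W_j\ge 2)\bigr] = O\bigl((\theta/j)^2\bigr)$; summing, the Lindeberg sum is $O\bigl(\eta_n^{-2}\sum_{j=1}^n u_j^2/j^2\bigr)$. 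I would then invoke the elementary fact that $\sum_{j=1}^n a_j/j = o\bigl(\sum_{j=1}^n a_j\bigr)$ whenever $a_j\ge 0$ and $\sum_j a_j=\infty$ (Abel summation plus splitting at a large cutoff), applied with $a_j = u_j^2/j$, to conclude the Lindeberg sum vanishes; hence $\bigl(T_n-\mathbb{E}[T_n]\bigr)/\eta_n \Rightarrow \mathcal{N}(0,1)$. By Slutsky, absorbing $R_n/\eta_n$, the same holds for $\bigl(S_n-\mathbb{E}[S_n]\bigr)/\eta_n$. Finally $\Var(S_n) = \Var(T_n) + 2\operatorname{Cov}(T_n,R_n) + \Var(R_n) = \eta_n^2 + o(\eta_n^2)$ by Cauchy--Schwarz and the bounds above---equivalently, one expands $\Var(S_n) = \sum_{i,j}u_iu_j\operatorname{Cov}(C_i^{(n)},C_j^{(n)})$, evaluates the covariances from \eqref{diaconispoisson}, and checks the off-diagonal and boundary terms are $O(M_n^2)$---so $\sqrt{\Var(S_n)}\sim\eta_n$ and one more use of Slutsky yields the stated normalization.

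The hard part will be the Feller-coupling input: obtaining the first- and second-moment bounds on $\sum_{j=1}^n|C_j^{(n)}-W_j|$ with constants uniform in $n$ across the whole range $\theta>0$, which is most delicate when $0<\theta<1$, and arranging the coupling so that $C_j^{(n)}=W_j=0$ for $j>n$ on the relevant event, so that only $M_n = \max_{1\le j\le n}|u_j|$ (not $\sup_{j\ge 1}|u_j|$) enters the estimates. An alternative, closer in spirit to Section~\ref{k=1}, would be the method of moments applied directly to $\bigl(S_n-\mathbb{E}[S_n]\bigr)/\sqrt{\Var(S_n)}$: expand as in \eqref{Ymultinomial}, convert ordinary powers of $C_j^{(n)}$ to falling factorials via \eqref{stirling}, evaluate with \eqref{diaconispoisson}, and verify that only the all-pairs partitions survive in the limit---again governed by $M_n = o(\eta_n)$.
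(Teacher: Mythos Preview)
The paper does not prove this theorem: it is quoted from \cite{arousdang} (Theorem~2.4(1) there), and the only commentary in the paper is the sentence following the statement, which says that the main thrust is the Feller coupling relating $C_j^{(n)}$ to independent Poisson variables $W_j$, after which one computes the limiting law of the Poisson sum. Your proposal is exactly this route, spelled out: reduce to $T_n=\sum u_jW_j$ via the Feller coupling and the hypothesis $M_n=o(\eta_n)$, verify Lindeberg for $T_n/\eta_n$, and transfer back by Slutsky and a variance comparison. So you are aligned with the approach the paper attributes to \cite{arousdang}; there is nothing to contrast at the level of strategy.

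Two small comments on the details. Your Lindeberg computation and the ``elementary fact'' $\sum_{j\le n}a_j/j=o\bigl(\sum_{j\le n}a_j\bigr)$ for nonnegative $a_j$ with divergent sum are fine (split at a large cutoff $J$ with $1/J<\varepsilon$). The genuinely delicate point, which you correctly flag, is the second-moment Feller-coupling input $\mathbb{E}\bigl[(\sum_{j\le n}|C_j^{(n)}-W_j|)^2\bigr]=O(1)$ uniformly in $n$; this is available in \cite{a} and \cite{arousdang}, but is not something you can read off \eqref{diaconispoisson} alone, and it is indeed more subtle for $0<\theta<1$. Your alternative moment-method route (expand, convert to falling factorials via \eqref{stirling}, evaluate with \eqref{diaconispoisson}, show only the all-pairs partitions survive) is also viable and is precisely the mechanism of Section~\ref{k=1}; it has the advantage of avoiding the coupling entirely, at the cost of the combinatorics.
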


As in \cite{wieand}, the main thrust of the proofs of these two theorems is the Feller coupling that relates cycle lengths $C_j^{(n)}$ to independent Poisson variables $W_j$ with parameter $\theta/j$.  If the sequence $(u_j)_{j \ge 1}$ satisfies the hypotheses of Theorem \ref{smooth} or \ref{rough}, then $X_n^{(u_j)}$ and $\displaystyle{ X_n^{*, (u_j)} := \sum_{j=1}^n u_j W_j }$ will have the same limiting behavior.  It is then easy to compute the limiting law of (the normalized version of) $\displaystyle{ X_n^{*, (u_j)}}$ and see that it is infinitely divisible and given either by Theorem \ref{smooth} or \ref{rough} depending on the asymptotics of $(u_j)$.  

Note that the random variables $Y_{n,k}$ studied in this work correspond to $X_n^{(u_j)}$ where $u_j = j^{k-1} (\{j \alpha \} - \{j \beta \})$.  As shown in Theorem \ref{momentsthm}, the limiting distribution $Y_{\infty, k}$ is compactly supported for $k>1$, hence not infinitely divisible.  Thus, we've uncovered a new class of limiting distributions $Y_{\infty, k}$ not present in \cite{arousdang}.  The random variables $Y_{n,k}$ of course must fail to satisfy the hypotheses of both Theorems \ref{smooth} and \ref{rough} and indeed $\displaystyle{\lim_{n \to \infty} \eta_n = \infty}$ and $\max \limits_{1 \le j \le n} |u_j| = \Omega(\eta_n)$ where $\eta_n$ is defined as in Theorem \ref{rough}.  Let $Y_{n,k}^*$ be the normalized version of $X_n^{*,(u_j)}$, i.e.
\begin{equation} Y_{n,k}^* := \sum_{j=1}^n \frac{j^{k-1} W_j (\{j \alpha\} - \{j \beta\})}{n^{k-1} } 
\end{equation} 
Using the L\'{e}vy-Khintchine Representation Theorem, it is easy to state the limiting distribution of $Y_{n,k}^*$.  Recall that Kolmogorov's theorem \cite[p. 162]{durrett}, a special case of the L\'{e}vy-Khintchine Theorem, states that a random variable $Z$ has an infinitely divisible distribution with mean 0 and finite variance if and only if its characteristic function has \[ \log \phi(t) = \int (e^{it x} - itx - 1)x^{-2} \nu(dx) \] where $\nu$ is called the canonical measure and $\Var Z = \nu(\mathbb{R})$.  

\begin{proposition} \label{Ynkstar}
The random variables $Y_{n,k}^*$ converge weakly to a random variable $Y_{\infty,k}^*$ with an infinitely divisible law given by \[\log \mathbb{E}[\exp(i t Y_{\infty,k}^*)] = \int_{-1}^1 (e^{i t x} - it x - 1)x^{-2}\nu(dx) \] where the canonical measure is supported on the interval $[-1, 1]$ and given by \[\nu(dx) = \frac{\theta}{k-1} \left(-x^2 + \frac{|x|^3}{2} + \frac{|x|}{2} \right) dx\] with $\displaystyle{\nu(\mathbb{R}) = \frac{\theta}{12(k-1)} }$.
\end{proposition}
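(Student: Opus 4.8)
The key observation is that $Y_{n,k}^*$ is a sum of \emph{independent} scaled Poisson variables, so its characteristic function is completely explicit, and the factor $(j/n)^{k-1}$ inside the summands conspires with the Poisson parameters $\theta/j$ to turn everything into an ordinary Riemann sum in $u=j/n$. Writing $c_{j,n}:=(j/n)^{k-1}(\{j\alpha\}-\{j\beta\})$, so that $Y_{n,k}^*=\sum_{j=1}^n c_{j,n}W_j$ with the $W_j$ independent Poisson$(\theta/j)$, independence and the formula $\mathbb{E}[e^{itcW}]=\exp(\lambda(e^{itc}-1))$ give
\[
\log\mathbb{E}\big[e^{itY_{n,k}^*}\big]=\sum_{j=1}^n\frac{\theta}{j}\big(e^{itc_{j,n}}-1\big).
\]
Splitting $e^{itc_{j,n}}-1=\big(e^{itc_{j,n}}-1-itc_{j,n}\big)+itc_{j,n}$ and using $\tfrac{\theta}{j}=\tfrac1n\cdot\tfrac{\theta}{j/n}$, this equals $\tfrac1n\sum_{j=1}^n H_t(\mathbf z_j)+it\cdot\tfrac1n\sum_{j=1}^n\widetilde H(\mathbf z_j)$, where $\mathbf z_j:=(j/n,\{j\alpha\},\{j\beta\})$, $H_t(u,x,y):=\tfrac{\theta}{u}\big(e^{itu^{k-1}(x-y)}-1-itu^{k-1}(x-y)\big)$ and $\widetilde H(u,x,y):=\theta u^{k-2}(x-y)$; the point of peeling off the linear term is to improve the decay of $H_t$ near $u=0$.

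The plan is then to apply equidistribution. The $n$-dependent array $\mathbf z_1,\dots,\mathbf z_n$ equidistributes in $[0,1]\times\mathbb{T}^2$ as $n\to\infty$: for a nonzero frequency $(h_0,h_1,h_2)$ with $(h_1,h_2)\ne(0,0)$ the number $h_1\alpha+h_2\beta$ is irrational by the standing hypothesis on $\alpha,\beta$, so $\big|\sum_{j\le n}e^{2\pi i j(h_0/n+h_1\alpha+h_2\beta)}\big|\le 2/|e^{2\pi i(h_0/n+h_1\alpha+h_2\beta)}-1|=O(1)$, while for $(h_1,h_2)=(0,0)$ the sum of $n$th roots of unity vanishes; in either case the normalized Weyl sum tends to $0$. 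Hence $\tfrac1n\sum_{j\le n}H(\mathbf z_j)\to\int_{[0,1]\times\mathbb{T}^2}H$ for every bounded Riemann integrable $H$. Since $k>1$, the function $H_t$ vanishes to order $u^{2k-3}$ at $u=0$ and is therefore bounded and Riemann integrable, while $\widetilde H$ is bounded with $\int\widetilde H=\theta\int_0^1u^{k-2}\,du\cdot\mathbb{E}[U_1-U_2]=0$ (so the $it$-term disappears in the limit, i.e.\ $\mathbb{E}[Y_{n,k}^*]\to 0$). Using that $x-y\sim U_1-U_2$ has triangular density $1-|v|$ on $[-1,1]$, we obtain
\[
\log\mathbb{E}\big[e^{itY_{n,k}^*}\big]\ \longrightarrow\ \theta\int_0^1\frac{du}{u}\int_{-1}^{1}\big(e^{itu^{k-1}v}-1-itu^{k-1}v\big)(1-|v|)\,dv.
\]

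It remains to evaluate this double integral, which is the main bookkeeping step. For each fixed $v$ the substitution $x=u^{k-1}v$ gives the clean identity $\tfrac{du}{u}=\tfrac{dx}{(k-1)x}$, so after interchanging the order of integration (using $\int_{|x|}^{1}(1-|v|)\,dv=\tfrac12(1-|x|)^2$, and the symmetric computation on the negative axis) the limit becomes
\[
\frac{\theta}{k-1}\int_{-1}^{1}\big(e^{itx}-1-itx\big)\,\frac{(1-|x|)^2}{2|x|}\,dx=\int_{-1}^{1}\big(e^{itx}-itx-1\big)\,x^{-2}\nu(dx),
\]
since $\tfrac{\theta}{k-1}\cdot\tfrac{(1-|x|)^2}{2|x|}\,dx=x^{-2}\nu(dx)$ for $\nu(dx)=\tfrac{\theta}{k-1}\big(-x^2+\tfrac{|x|^3}{2}+\tfrac{|x|}{2}\big)\,dx=\tfrac{\theta}{2(k-1)}|x|(1-|x|)^2\,dx\ge 0$, supported on $[-1,1]$, with $\nu(\mathbb{R})=\tfrac{\theta}{k-1}\int_{-1}^{1}\big(-x^2+\tfrac{|x|^3}{2}+\tfrac{|x|}{2}\big)\,dx=\tfrac{\theta}{12(k-1)}$.

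Finally, the limiting $\log$-characteristic function is finite and continuous at $0$, so by the L\'evy continuity theorem $Y_{n,k}^*$ converges weakly to a limit $Y_{\infty,k}^*$ whose characteristic function has exactly the displayed form; this is Kolmogorov's canonical representation \cite{durrett}, which identifies $Y_{\infty,k}^*$ as the infinitely divisible law with canonical measure $\nu$ and $\Var Y_{\infty,k}^*=\nu(\mathbb{R})=\tfrac{\theta}{12(k-1)}$ — consistent with $\lim_{n\to\infty}\Var Y_{n,k}^*=\tfrac{\theta}{12(k-1)}$, which follows directly from Lemma~\ref{s=1} with $m=2$. The one ingredient not already in hand is the joint equidistribution of the triangular array $(j/n,j\alpha,j\beta)\bmod 1$ (not literally Theorem~\ref{weylthm}), and I expect that short Weyl-sum verification together with the change-of-variables manipulation in the last two displays to be where the actual — if routine — work lies.
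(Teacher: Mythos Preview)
Your argument is correct and arrives at exactly the same limiting log-characteristic function as the paper, but the route is genuinely different. The paper expands $e^{ita_j}-1$ as a power series in $it$, applies Lemma~\ref{s=1} term by term to evaluate $\lim_n n^{-m(k-1)}\sum_j j^{m(k-1)-1}(\{j\alpha\}-\{j\beta\})^m$ for each $m$, and then recognizes the resulting series $\sum_{m\ge 1}\frac{(it)^{2m}}{(2m)!}\frac{2\theta}{(k-1)2m(2m+1)(2m+2)}$ as the Kolmogorov integral (leaving the interchange of limit and infinite sum implicit). You instead treat the whole expression as a single Riemann sum over the triangular array $(j/n,\{j\alpha\},\{j\beta\})$, prove equidistribution of that array in $[0,1]\times\mathbb{T}^2$ via a short Weyl-sum computation, and pass to the limit in one stroke; the change of variables $x=u^{k-1}v$ then delivers the canonical measure directly without ever writing down the moment series. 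Your approach is more self-contained (it does not lean on Lemma~\ref{s=1} or the Szeg\H{o}--P\'olya summability device behind it) and makes the appearance of the density $\tfrac{(1-|x|)^2}{2|x|}$ transparent as a Fubini artifact; the paper's approach has the advantage of reusing machinery already in place and avoiding the mildly nonstandard triangular-array version of Weyl's criterion. Both the equidistribution check and the bound $|H_t|\le\tfrac{\theta t^2}{2}u^{2k-3}$ (which makes $H_t$ bounded and Riemann integrable for integer $k\ge 2$) are routine, so your self-assessment of where the work lies is accurate.
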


\begin{proof}
Let $\displaystyle{ a_j = \frac{j^{k-1}(\{j \alpha\} - \{j \beta\})}{n^{k-1}} }$.  We have \begin{equation} \label{fininfdiv}
 \log \mathbb{E}[\exp(i t Y_{n,k}^*)] = \log \prod_{j=1}^n \mathbb{E}[\exp(i t a_j W_j)] = \sum_{j=1}^n \frac{\theta}{j} (e^{i t a_j} - 1 ) 
\end{equation}
Then 
\begin{align}
\nonumber \lim_{n \to \infty} \log \mathbb{E}[\exp(i t Y_{n,k}^*)] &= \lim_{n \to \infty} \sum_{j=1}^n \frac{\theta}{j} \big(\exp \bigg(i t \frac{j^{k-1}(\{j \alpha\} - \{j \beta\})}{n^{k-1}}  \bigg) - 1  \big) \\
& \nonumber = \lim_{n \to \infty} \sum_{m=1}^\infty \frac{(it)^m}{m!} \sum_{j=1}^n \frac{\theta}{j} \bigg(\frac{j^{k-1}}{n^{k-1}} \bigg)^m ( \{j \alpha\} - \{j \beta \})^m \\
& \label{infdiv} = \sum_{m=1}^\infty \frac{(it)^{2m}}{(2m)!} \frac{2\theta}{(k-1)2m(2m+1)(2m+2)} 
\end{align}
where the last equality is by Lemma \ref{s=1}.  By \eqref{fininfdiv}, we see that $Y_{n,k}^*$ has an infinitely divisible distribution for each $n$ and one can check that \eqref{infdiv} can be written as the integral
\begin{equation}
\frac{\theta}{k-1}\int_{-1}^1 (e^{i t x} - it x - 1)\left(-1 + \frac{|x|}{2} + \frac{1}{2|x|} \right) dx
\end{equation}

\end{proof}

\begin{remark}
For each $t \ge 0$, let $\mu_t$ denote the infinitely divisible law with canonical measure in Kolmogorov's theorem supported on the interval $[-1, 1]$ and given by \[\nu(dx) = t \left(-x^2 + \frac{|x|^3}{2} + \frac{|x|}{2} \right) dx .\]  Then note that the law forms a semigroup, i.e. $\mu_s \ast \mu_t = \mu_{s+t}.$  $\mu_0 = \delta_0$.  Thus, $\mu_t$ is a L\'{e}vy process.  The law of $Y_{\infty, k}^*$ is $\mu_{\theta/(k-1)}$.  This shows how they all fit into the same L\'{e}vy process.
\end{remark}

Proposition \ref{Ynkstar} shows that $Y_{n,k}^*$ converges to an infinitely divisible law and hence $Y_{n,k}$ and $Y_{n,k}^*$ do not have the same limiting distribution.  Thus, the Feller coupling between $C_j^{(n)}$ and $W_j$ breaks down here.  One way to see how the difference arises is again from the moment method.  

For the Poisson variable sum, instead of \eqref{Ynlimit}, we have
\begin{multline} \label{Ynstarlimit}
\lim_{n \to \infty} \frac{1}{n^{m(k-1)}}  \sum_{j_1 \neq ... \neq j_s} \prod_{l=1}^s j_l^{m_l(k-1)} \mathbb{E}\Big[\prod_{l=1}^s W_{j_l} \Big] (\{j_l\alpha \} - \{j_l\beta \})^{m_l} \\ = \lim_{n \to \infty} \frac{1}{n^{m(k-1)}} \sum_{1 \le j_1,...,j_s\le n} \prod_{l=1}^s \big( \theta j_l^{m_l(k-1) - 1} \big) (\{j_l\alpha \} - \{j_l\beta \})^{m_l}
\end{multline}
Thus, the limiting moments will differ.

The authors in \cite{arousdang} were motivated by linear eigenvalue statistics, and therefore a specific choice of $(u_j)_{j \ge 1}$.  For each $\sigma \in \mathfrak{S}_n$, let $E_{n,k}^{\textrm{tuple}}(\sigma)$, $E_{n,k}^{\textrm{set}}(\sigma)$, and $E_{n,k}^{\textrm{irrep}}(\sigma)$ denote the multiset of eigenangles of $\rho_{n,k}^{\textrm{tuple}}$, $\rho_{n,k}^{\textrm{set}}$, and $\rho_{n,k}^{\textrm{irrep}}$ respectively.  Let $f$ be a real-valued periodic function with period 1.  Define the linear eigenvalue statistic \begin{equation} X_{n, k, f}^{\textrm{tuple}}(\sigma) = \sum_{\phi \in E_{n,k}^{\textrm{tuple}}(\sigma)} f(\phi)
\end{equation} and define $X_{n, k, f}^{\textrm{set}}(\sigma)$ and $X_{n, k, f}^{\textrm{irrep}}(\sigma)$ similarly.  Let \begin{equation} R_j(f) = \frac{1}{j} \bigg(\frac{1}{2} f(0) + \sum_{i=1}^{j-1} f\left(\frac{i}{j}\right) + \frac{1}{2}f(1) \bigg) - \int_0^1 f(x)dx
\end{equation} 
One can interpret $R_j(f)$ as the error in approximating the integral using the trapezoidal rule.  
Then it is easy to see (i.e. \cite[(1.8)]{arousdang}) that 
\begin{equation} \label{Xnkftuple} X_{n,k, f}^{\textrm{tuple}}(\sigma) = \big| E_{n,k}^{\textrm{tuple}}(\sigma) \big| \int_0^1 f(x)dx + \sum_{j} C_{j,k}^{(n), \textrm{tuple}}(\sigma) j R_j(f) 
\end{equation}
 \begin{equation} \label{Xnkfset} X_{n,k, f}^{\textrm{set}}(\sigma) = \big| E_{n,k}^{\textrm{set}}(\sigma) \big| \int_0^1 f(x)dx + \sum_{j} C_{j,k}^{(n), \textrm{set}}(\sigma) j R_j(f) 
\end{equation}

In particular, finding the limiting behavior of the linear statistic $\displaystyle{ X_{n, 1, f}^{\textrm{tuple}}(\sigma)}$ corresponds to investigating $X_n^{(u_j)}$ for $u_j = j R_j(f)$.  This is the case studied in \cite{arousdang} for a wide class of functions $f$.  For smooth functions with good trapezoidal approximations, $R_j(f)$ will decay to zero rapidly.  Thus, we have a direct correspondence between smoothness of the function $f$ and decay rate of $u_j$.  

For $k>1$, define
\begin{equation}
Y_{n,k, f}^{\textrm{tuple}} := \frac{X_{n,k, f}^{\textrm{tuple}} - \mathbb{E}[X_{n,k, f}^{\textrm{tuple}}]}{n^{k-1}}
\end{equation} 
\begin{equation}
Y_{n,k, f}^{\textrm{set}} := k! \frac{X_{n,k, f}^{\textrm{set}} - \mathbb{E}[X_{n,k, f}^{\textrm{set}}]}{n^{k-1}}
\end{equation} 
\begin{equation}
Y_{n,k, f}^{\textrm{irrep}} := k! \frac{X_{n,k, f}^{\textrm{irrep}} - \mathbb{E}[X_{n,k, f}^{\textrm{irrep}}]}{n^{k-1}}
\end{equation} 
With these definitions, we can state the following generalization of Theorem \ref{simplifythm}.

\begin{theorem} \label{simplifythmgeneral}
Let $f$ be such that $R_j(f) = O(1/j)$.  Let \begin{equation}
Y_{n,k,f} = \sum_{j=1}^n \frac{C_j^{(n)}  j^k R_j(f)}{n^{k-1}}
\end{equation} 
As $n \to \infty$ for fixed $k > 1$, each of the random variables $Y_{n,k,f}^{\textrm{tuple}}$, $Y_{n,k,f}^{\textrm{set}}$, and $Y_{n,k,f}^{\textrm{irrep}}$ converges in law to the same limiting distribution $\displaystyle{\lim_{ n\to \infty} \mathcal{L} (Y_{n,k,f})}$ (assuming it exists). 
\end{theorem}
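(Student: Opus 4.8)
The plan is to re-run the proofs of Theorem \ref{simplifythm} from Sections \ref{ktuple}, \ref{ksubset} and \ref{irrep} with the single substitution: the quantity $\{j\alpha\}-\{j\beta\}$, which in those sections was the contribution of a length-$j$ cycle of $\sigma_k^{\textrm{tuple}}$ (resp.\ $\sigma_k^{\textrm{set}}$, resp.\ the irrep) to the eigenangle count in $I$, gets replaced by $jR_j(f)$, which by \eqref{Xnkftuple}--\eqref{Xnkfset} is the contribution of such a cycle to the corresponding linear statistic $X_{n,k,f}$. The only property of the indicator used in those sections is the trivial bound $|\{j\alpha\}-\{j\beta\}|\le 1$; its replacement here is $\sup_j|jR_j(f)|<\infty$, which is precisely the hypothesis $R_j(f)=O(1/j)$. (Taking $f=\mathds{1}_{(\alpha,\beta)}$ one computes $jR_j(f)=\{j\alpha\}-\{j\beta\}$, so Theorem \ref{simplifythm} is the special case; the role of the hypothesis is exactly to keep the per-cycle contribution bounded, as was automatic before.)

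First I would handle $Y_{n,k,f}^{\textrm{tuple}}$. Since $|E_{n,k}^{\textrm{tuple}}(\sigma)|=n!/(n-k)!$ is deterministic, centering in \eqref{Xnkftuple} removes the $\int_0^1 f$ term, so
\[
Y_{n,k,f}^{\textrm{tuple}}=\frac{1}{n^{k-1}}\Bigl(\ \sum_{\sigma_k^{\textrm{tuple}}\text{-cycles }C}\ell(C)\,R_{\ell(C)}(f)\ -\ \mathbb{E}[\cdots]\ \Bigr),
\]
where $\ell(C)$ is the length of $C$. Split the sum over $C$ by the double partition of $[k]$ it determines, as in Section \ref{ktuple}. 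The non-trivial double partitions contribute, in absolute value, at most $\sup_j|jR_j(f)|$ times the number of $\sigma_k^{\textrm{tuple}}$-cycles they produce, whose expectation is $o(n^{k-1})$ by Lemma \ref{tuplebound} (that lemma bounds cycle counts and involves no $f$-factors, hence is unchanged). The trivial double partition contributes, by \eqref{numform} with $m=1$, exactly $\sum_j C_j^{(n)}\,j^{\underline{k}}\,R_j(f)$; since $j^{\underline{k}}=j^{k}+O(j^{k-1})$ and $|R_j(f)|=O(1/j)$, this differs from $\sum_j C_j^{(n)} j^k R_j(f)=n^{k-1}Y_{n,k,f}$ by a quantity of expected absolute value $O\bigl(\sum_j C_j^{(n)} j^{k-2}\bigr)$, which is $O(n^{k-2})$ for $k\ge 3$ and $O(\log n)$ for $k=2$ (using $\sum_j jC_j^{(n)}=n$), hence $o(n^{k-1})$. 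Therefore $\mathbb{E}\,|Y_{n,k,f}^{\textrm{tuple}}-Y_{n,k,f}|\to 0$, so $Y_{n,k,f}^{\textrm{tuple}}$ and $Y_{n,k,f}$ share the same limiting law whenever one exists. The argument for $Y_{n,k,f}^{\textrm{set}}$ is the same, using Lemma \ref{subsetbound}, the expression \eqref{subsetform} with $m=1$, and the fact that $N_{j,k}=L_{j,k}=j^{k-1}/k!+O(j^{k-2})$, so periodic necklaces are again negligible and the factor $k!$ in $Y_{n,k,f}^{\textrm{set}}$ is absorbed.

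For $Y_{n,k,f}^{\textrm{irrep}}$ I would transcribe Section \ref{irrep} with every count $|\,\cdot\cap I\,|$ replaced by $\sum_{\phi\in\,\cdot}f(\phi)$. Theorem \ref{stembridge} and Proposition \ref{stem} reduce matters to $E_n^{Q_{n-1,k}^{\textrm{set}}}$, hence (up to the factor $k!$) to $E_n^{Q_{n-1,k}^{\textrm{tuple}}}$. The analogue of Lemma \ref{Qtuplestar} for $\sum f$ holds by the computation just described (with $\prod i_r^{\underline{k_{rs}}}$ weakened to $\prod i_r^{k_{rs}}$), while the set-theoretic identities \eqref{decomposition}, \eqref{recursive} and Lemma \ref{duplicate} are untouched; applying $\sum f$ to them expresses $\sum_{\phi\in E_n^{Q_{n,k}^{\textrm{tuple*}}}}f(\phi)$ as $\sum_{\phi\in E_n^{Q_{n-1,k}^{\textrm{tuple}}}}f(\phi)$, plus $k$ copies of an $o(n^{k-1})$-scale term, plus $n\binom{k}{2}$ translated copies of an $o(n^{k-1})$-scale term, plus an $O(n^{k-2})$ error. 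Inducting on $k$ exactly as in the passage from Lemma \ref{Qtuplestar} to Lemma \ref{Qtuple} then gives the claim.

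The main obstacle is this last step: a translated copy of $E_n^{Q_{n-1,k-2}^{\textrm{tuple}}}$ corresponds to replacing $f$ by a shift $f(\cdot+c)$, and one must check that $n\binom{k}{2}$ such copies, summed and centered, remain $o(n^{k-1})$. This works because $f(\cdot+c)$ is still bounded of period $1$: grouping the $n\binom{k}{2}$ copies by the $\sigma$-cycle containing the repeated value $v$ (so that each cycle of length $j$ of $\sigma$ contributes a \emph{bounded} amount times its multiplicity), the sum takes the form $\sum_j C_j^{(n)}\cdot O(j^{k-2})$ with a uniformly bounded implied constant, whose centered fluctuation is $o(n^{k-1})$ exactly as in the indicator case (for $k\ge 3$ even the crude, perfectly-correlated per-copy bound already gives $O(n^{k-2})=o(n^{k-1})$, while for $k=2$ one uses the genuine cancellation among the copies, i.e. that the total centered deviation is $O_P(\sqrt{\log n})$). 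Everything else is a routine re-reading of Sections \ref{ktuple}--\ref{irrep} with $jR_j(f)$ in place of $\{j\alpha\}-\{j\beta\}$.
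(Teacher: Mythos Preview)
Your proposal is correct and follows essentially the same approach as the paper, which simply asserts that the arguments of Sections~\ref{ktuple}--\ref{irrep} go through unchanged once one replaces $\{j\alpha\}-\{j\beta\}$ by $jR_j(f)$ and uses the hypothesis $R_j(f)=O(1/j)$ to keep this per-cycle contribution bounded. You supply considerably more detail than the paper's own (three-sentence) proof---in particular on the rotated-copies step in the irrep case, which the paper dispatches with the single phrase ``the same induction argument on $k$ gives the appropriate generalization of Lemma~\ref{Qtuple}.''
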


\begin{proof}
The proof of Theorem \ref{simplifythm} goes through essentially unchanged for each of the three types of representations. 
We have equation \eqref{Xnkftuple} analogous to \eqref{Xnktuple}.  Since $j R_j(f)$ is $O(1)$, Lemma \ref{tuplebound} applies and the proof follows unchanged for the $k$-tuple case.  Similarly, we have equation \eqref{Xnkfset} analogous to \eqref{Xnkset}. Lemma \ref{subsetbound} also applies unchanged for the $k$-subset case.  Finally, for the irrep case, the generalization of Lemma \ref{Qtuplestar} replacing $\displaystyle{E_n^{Q_{n,k}^\textrm{tuple*}} \cap I}$ with $\displaystyle{\sum_{\phi \in E_n^{Q_{n,k}^\textrm{tuple*}}} f(\phi) }$ clearly holds.  Then the same induction argument on $k$ gives the appropriate generalization of Lemma \ref{Qtuple} and the result follows.  
\end{proof}

\begin{remark}
By Lemma 5.3 in \cite{arousdang}, if $f$ is of bounded total variation, then $R_j(f) = O(1/j)$. 
\end{remark}

\begin{remark}
Theorem \ref{simplifythm} corresponds to the case $f = \mathbbm{1}_{(\alpha, \beta)}$ where $\alpha$ and $\beta$ are linearly independent irrational numbers over $\mathbb{Q}$.   
\end{remark}
  
From the perspective of Theorem \ref{simplifythmgeneral}, we see that whereas \cite{arousdang} studies the random variables $X_n^{(u_j)}$ for $u_j = j R_j(f)$ for functions $f$ of various degrees of smoothness, the present work investigates them mostly for $u_j = j^k R_j(f)$ where $f = \mathbbm{1}_{(\alpha, \beta)}$.  To conclude, we observe that we can obtain limiting laws for (appropriately scaled versions of) $Y_{n,k,f}$ (and therefore $Y_{n,k,f}^{\textrm{tuple}}$, $Y_{n,k,f}^{\textrm{set}}$, and $Y_{n,k,f}^{\textrm{irrep}}$) that match those seen in Theorems \ref{smooth} and \ref{rough} by choosing $f$ so that $R_j(f)$ satisfies appropriate conditions.  For instance, one can show via Euler-Maclaurin summation that if $f \in C^{2m}$, i.e. $2m$ times continuously differentiable, then $R_j(f) = O(1/j^{2m})$.  Then if for even $k$ we take $f \in C^{k+2}$ and for odd $k$ we take $f \in C^{k+1}$, the hypotheses of Theorem \ref{smooth} are met.  If we choose $f$ on the cusp of $k$-differentiability such that $R_j(f) = \Theta(1/j^k)$, then the hypotheses of Theorem \ref{rough} are met.

\section*{Acknowledgements}
The author wishes to thank his PhD advisor Steve Evans for helpful discussions and comments on this work.  

\bibliography{references}
\bibliographystyle{plain}

\end{document}